\documentclass[aos,preprint,dvipsnames]{imsart}

\usepackage[margin=1.25in]{geometry}
\RequirePackage[OT1]{fontenc}
\RequirePackage{graphicx}

\RequirePackage{amsthm,amsmath,amsfonts,amssymb}

\RequirePackage[cmex10]{amsmath}
\RequirePackage[round, authoryear]{natbib}
\RequirePackage[colorlinks,citecolor=blue,urlcolor=blue]{hyperref}
\usepackage{graphicx}
\usepackage{epstopdf}
\usepackage{amsfonts}
\usepackage{amsmath,amssymb}		
\usepackage{bbm} 	
\usepackage{amsthm}
\usepackage{xcolor}
\usepackage{accents}
\usepackage{easy-todo}
\usepackage{mathtools}
\usepackage{tikz}
\usepackage{paralist} 

\newtheorem{defin}{Definition}[section]
\newtheorem{prop}{Proposition}[section]
\newtheorem{cor}{Corollary}[section]
\newtheorem{theorem}{Theorem}[section]
\newtheorem{lemma}{Lemma}[section]

\newtheorem {Example}{Example}[section]

\numberwithin{equation}{section}

\newcommand{\n}{^{(n)}} 
\newcommand{\indicator}{\mathbbm{1}}     
\newcommand{\mfzero}{\mathbf{0}}         
\newcommand{\pint}{{\boldsymbol{\theta}}}  
\newcommand{\lpint}{{\boldsymbol{\tau}}}   
\newcommand{\pnui}{f}       
\newcommand{\lpnui}{\eta}   

\newcommand{\LPnui}{\Upsilon}
\newcommand{\rN}{\mathbb{N}}             

\newcommand{\lto}{\stackrel{\mathcal{L}}{\longrightarrow}}

\newcommand{\law}{\mathrm{P}}
\newcommand{\lawq}{\mathrm{Q}}

\newcommand{\prob}{\mathbb{P}}
\newcommand{\probn}{\mathbb{P}\n}
\newcommand{\Exp}{\mathbb{E}}
\newcommand{\Expn}{\mathbb{E}\n}
\newcommand{\dd}{\mathrm{d}}             
\newcommand{\outcome}{\Omega}

\newcommand{\E}{\mathcal{E}}                     
\newcommand{\EO}{\E_{\mfzero}}	
\renewcommand{\P}{\mathcal{P}}

\newcommand{\En}{\E\n}
\newcommand{\EnO}{\En_{\mfzero}}

\newcommand{\Field}{\mcB}

\newcommand{\FieldSuf}{\Field_{\text{\rm suff}}}
\newcommand{\FieldSufn}{\Field_{\text{\rm{suff}}}\n}
\newcommand{\FinitePar}{\cal J}
\newcommand{\AField}{\mcA}

\newcommand{\AFieldn}{\AField\n}

\newcommand{\FisherInfo}{{\cal I}}
\newcommand{\FisherInfob}{{\boldsymbol{\cal I}}}

\newcommand{\mcA}{\mathcal{A}}
\newcommand{\mcB}{\mathcal{B}}

\newcommand{\mbR}{\mathbb{R}}

\newcommand{\CS}{\Delta}
\newcommand{\CSb}{\boldsymbol{\Delta}}
\newcommand{\CSshift}{\CSb_{\text{{shift}}}}
\newcommand{\CSdrift}{\CSb_{\text{\rm drift}}}

\newcommand{\s}{u}

\newcommand{\B}{\mathbf{B}}

\newcommand{\Bnui}{\B_{\lpnui}}
\newcommand{\CODF}{{\bf F}_\pm}
\newcommand{\CODFn}{\CODF^{(n)}}

\newcommand{\thetab}{\boldsymbol\theta}
\newcommand{\varthetab}{\boldsymbol\vartheta}

\newcommand{\Thetab}{\boldsymbol\Theta}
\newcommand{\taub}{\boldsymbol\tau}
\newcommand{\Deltab}{\boldsymbol\Delta}
\newcommand{\Gammab}{\boldsymbol\Gamma}

\newcommand{\pr}{^{\prime}}

\usepackage{stackengine}
\stackMath
\newcommand\tenq[2][1]{%
\def\useanchorwidth{T}%
\ifnum#1>1%
\stackunder[0pt]{\tenq[\numexpr#1-1\relax]{#2}}{\scriptscriptstyle\thicksim}%
\else%
\stackunder[1pt]{#2}{\scriptscriptstyle\thicksim}%
\fi%
}

\makeatletter
\newcommand*\rel@kern[1]{\kern#1\dimexpr\macc@kerna}
\newcommand*\widebar[1]{%
  \begingroup
  \def\mathaccent##1##2{%
    \rel@kern{0.8}%
    \overline{\rel@kern{-0.8}\macc@nucleus\rel@kern{0.2}}%
    \rel@kern{-0.2}%
  }%
  \macc@depth\@ne
  \let\math@bgroup\@empty \let\math@egroup\macc@set@skewchar
  \mathsurround\z@ \frozen@everymath{\mathgroup\macc@group\relax}%
  \macc@set@skewchar\relax
  \let\mathaccentV\macc@nested@a
  \macc@nested@a\relax111{#1}%
  \endgroup
}
\makeatother

\graphicspath {{figures/}}

\startlocaldefs
\endlocaldefs

\begin{document}

\begin{frontmatter}
\title{Maximal Ancillarity,  Semiparametric Efficiency, \\ and the 
Elimination of Nuisances}
\runtitle{Ancillarity and Semiparametric Efficiency}


\begin{aug}
\author{\fnms{Marc} \snm{Hallin}\thanksref{m1}
\ead[label=e1]{mhallin@ulb.ac.be}}
\and
\author{\fnms{Bas J.M.} \snm{Werker}\thanksref{m2}
\ead[label=e2]{b.j.m.werker@tilburguniversity.edu}}
\and
\author{\fnms{Bo} \snm{Zhou}\thanksref{m3}
\ead[label=e3]{bzhou@vt.edu}}

\address{\thanksmark{m1}\parbox[t]{0.6\textwidth}{\centering \it 
Université libre de Bruxelles, Belgium  \\  and \\ 
Institute of Information Theory and Automation,
Czech Academy of Sciences, Prague, Czech Republic\smallskip }}
\address{\thanksmark{m2}\parbox[t]{0.4\textwidth}{\centering\it
Tilburg University, The Netherlands}}
\address{\thanksmark{m3}\parbox[t]{0.39\textwidth}{\centering\it
Virginia Tech, Blacksburg, VA, USA}}

\runauthor{Hallin, Werker, and Zhou}
\end{aug}

\printaddresses

\begin{abstract} 
Restricting statistical experiments via nuisance-ancillary $\sigma$-fields yields  nuisance-free experiments. However, a moot point with ancillarity is that maximal ancillary $\sigma$-fields are typically not unique. There are exceptions, though, among which the limiting experiments in a locally asymptotically normal (LAN) context. Building on this, we address the maximal ancillarity uniqueness problem by adopting a H\' ajek-Le Cam asymptotic perspective and define the concept of sequences of locally asymptotically maximal nuisance-ancillary\linebreak $\sigma$-fields. We then show that any semiparametrically efficient procedure admits versions that are measurable with respect to  such\linebreak  $\sigma$-fields while enjoying strict  finite-sample nuisance-ancillarity, hence eliminating the nuisance without the hassle of estimating it. This is in sharp contrast with classical tangent space projections, which also achieve semiparametric efficiency but only enjoy asymptotic nuisance-ancillarity---at the price, moreover, of  adequately estimating  the nuisance. When the nuisance is the density of some noise or innovation driving the data-generating process of a LAN experiment, we show that a sequence of locally asymptotically maximal nuisance-ancillary $\sigma$-fields is generated by the so-called center-outward residual ranks and signs based on measure transportation results. Restricting local experiments to such $\sigma$-fields yields   sequences of finite-sample nuisance-free (here, distribution-free)  restrictions of the original local LAN experiments that nevertheless achieve the semiparametric efficiency bounds of the ori\-ginal ones.
\end{abstract}

\begin{keyword}[class=MSC]
\kwd[Primary ]{62G10}\kwd{62G20}
\kwd[; secondary ]{62P20}\kwd{62M10}
\end{keyword}

\begin{keyword}
\kwd{semiparametric efficiency}
\kwd{maximal ancillarity}
\kwd{elimination of nuisances}
\kwd{Gaussian shift}
\kwd{Brownian drift}
\kwd{measure transportation}
\kwd{distribution-freeness}
\kwd{center-outward ranks and signs}
\end{keyword}

\end{frontmatter}

\section{Introduction}

\noindent Besides a parameter of interest $\thetab$, most statistical experiments of practical interest also involve a nuisance parameter $\varthetab$ which, moreover, often is infinite-dimensional. Typical examples are the semiparametric experiments where the nuisance is the unspecified density $f$ of some noise or innovation driving the data-generating process.

\subsection{Ancillarity and the elimination of nuisance parameters}
Eliminating nuisances---identifying nuisance-free subexperiments to perform nuisance-free inference---has been a central problem in statistics for almost a century. A basic concept, in that context, is  {\it ancil\-larity}, which can be traced back to \cite{Fisher25}, where the term was coined---see \cite{Stig01, Stig92}. Formal definitions and profound properties were provided much later, with fundamental contributions by Debabrata Basu \citep{basu1955, basu1958, basu1959, basu1964}; see \cite{Dawid2011}. Various refinements of the concept, showing the complexity of the notion, also can be found in \cite{Fraser56} and  \cite{BN73, BN76, BN99},  among others;  see \cite{Soren24} for details.
  
Basu's 1977 classical survey of the problem \citep{basu1977} mainly discusses the fundamental notions of {\it completeness}, {\it (minimal) sufficiency}, and {\it (maximal) ancillarity} and the related methods (marginalization, conditioning,...) of nuisance elimination, but comes to the somewhat distressing conclusion that no general solution, applicable to arbitrary models, exists. More recent surveys such as \cite{Ghoshetal.10} more or less end up with the same conclusion. Although the problem of eliminating nuisances has been an active line of research for many years, ancilla\-rity remains {\it ``a shadowy topic in statistical theory''} \citep{Cox82} that {\it ``continues to intrigue''} \citep{Ghoshetal.10} and, despite {\it ``an astoni\-shingly large amount of effort and ingenuity''}, still is {\it ``strewn with logicians' nightmares''} \citep{basu1977}. 
  
In this paper, we are revisisting the problem of ancillarity and nuisance elimination from a local and asymptotic point of view in the context of locally asymptotically normal (LAN) experiments and closely related issues of semiparametric efficiency. 

\subsection{Maximal nuisance-ancillarity}
In a statistical experiment
\begin{equation}\label{exp}
{\mathcal E}\coloneqq \big({\mathcal X}, {\mathcal B}, {\mathcal P}\coloneqq\{{\rm P}_{\thetab , \varthetab} :  (\thetab , \varthetab ) \in {\boldsymbol\Theta}\times{\mathcal F}\}
\big),
\end{equation}
where~$\thetab$ is a finite-dimensional parameter of interest and~$\varthetab$ a (possibly infinite-dimensional) nuisance parameter,
call $\cal E$-{\it nuisance-ancillary at}~$\thetab$ any measurable function $({\bf X}, \thetab)\mapsto {\boldsymbol\zeta}^\dag_{\thetab}({\bf X})$ the distribution of which, under ${\rm P}_{\thetab,\varthetab}$, does not depend on~$\varthetab$. That ancillarity property, clearly, is invariant under (measurable) transformations: rather than the property of a single variable, nuisance-ancillarity is the property of a $\sigma$-field, and we call $\cal E$-{\it nuisance-ancillary at}~$\thetab$ any $\sigma$-field  ${\mathcal B}^\dag_{{\thetab}}\subseteq{\cal B}$ of events the probability of which, under~${\rm P}_{\thetab,\varthetab}$, does not depend on~$\varthetab$ (in Basu's terminology, a {\it $\thetab$-oriented $\sigma$-field}). In other words, ${\mathcal B}^\dag_{{\thetab}}$ is~$\cal E$-nuisance-ancillary at~$\thetab$ if and only if it is ancillary in the specified-$\thetab$ subexperiment
\begin{equation}
{\mathcal E}(\thetab)\coloneqq \big({\mathcal X}, {\mathcal B}, {\mathcal P}_{\thetab}\coloneqq\{{\rm P}_{\thetab , \varthetab} :  \varthetab  \in {\mathcal F}\} \big),\quad  \thetab\in {\boldsymbol\Theta}
\end{equation}
of~${\mathcal E}$---a {\it local} concept, thus, localized at $\thetab$. 

When such ${\mathcal B}^\dag_{{\thetab}}$ exist, a natural way of eliminating the nuisance locally (at $\thetab_0$) is to base inference about~$\thetab$ on~${\mathcal B}^\dag_{{\thetab_0}}$-measurable procedures: nuisance-free tests of hypotheses of the form~${\cal H}_0:\thetab = \thetab_0$, for instance, should be based on ${\mathcal B}^\dag_{{\thetab_0}}$-measurable test statistics; confidence regions for $\thetab$ at confidence level $(1-\alpha)$ should be constructed as collections of $\thetab_0$ values such that some~${\mathcal B}^\dag_{{\thetab_0}}$-measurable test does not reject ${\cal H}_0:\thetab = \thetab_0$  at level $\alpha$, etc. 

The larger ${\mathcal B}^\dag_{{\thetab}}$, the better\footnote{``Better'' here is meant in the sense of (semiparametric) efficiency; see Section~\ref{semiparameff} for a rigorous treatment.}: only {\it maximal} nuisance-ancillary $\sigma$-fields ${\mathcal B}^\dag_{{\thetab}}$ (maximal here is to be understood in the sense of set inclusion up to $\{{\rm P}_{\thetab , \varthetab} : (\thetab, \varthetab)\in\Thetab\times{\cal F}\}$-null sets), thus, should be considered. Unfortunately, it is a well-known fact that maximal ancillary $\sigma$-fields typically are not unique (see, among many others, \cite{basu1959, basu1964, Cox71}), and choosing one of them is everything but obvious: which ones are doing the ``best job''? Which ones are ``optimally'' preserving all the available information about $\thetab$? This issue with the concept is well-documented, if not resolved, in classical textbooks (see, e.g., Chapter~10 in \cite{LehmannRomano2005}).

Exceptions exist, though: a unique\footnote{When such a unique maximal nuisance-ancillary $\sigma$-field exists, we will call it \emph{strongly maximally nuisance-ancillary}.} maximal nuisance-ancillary $\sigma$-field exists, for instance, in the $d$-dimensional Gaussian location model (one $d$-dimensional observation $\bf X$) with specified full-rank covariance---the so-called {\it Gaussian shift} experiment ${\cal E}_{\text{\rm shift}}$---where (with~$d=d_1 + d_2$)
$${\bf X}\coloneqq \left(\begin{array}{c}{\bf X}_1\\ {\bf X}_2\end{array}\right)\sim{\mathcal N}\left( \left(\begin{array}{cc} {\boldsymbol \Sigma}_{11} & {\boldsymbol \Sigma}_{12} \\ {\boldsymbol \Sigma}_{12}^\top  &{\boldsymbol \Sigma}_{22}
\end{array}\right)\!\left(\begin{array}{c}\thetab \\ \varthetab\end{array}\right)
 ,\, \left(\begin{array}{cc} {\boldsymbol \Sigma}_{11} & {\boldsymbol \Sigma}_{12} \\ {\boldsymbol \Sigma}_{12}^\top  &{\boldsymbol \Sigma}_{22}
\end{array}\right)\right) , \quad \left(\begin{array}{c}\thetab \\ \varthetab\end{array}\right)\in {\mathbb R}^{d_1+d_2}.
$$
Here, the $\sigma$-field ${\cal B}^*\subseteq{\cal B}^d$ (${\cal B}^d$ the Borel $\sigma$-field on ${\mathbb R}^d$) generated by
$${\bf X}_1^*\coloneqq {\bf X}_1 - {\boldsymbol \Sigma}_{12}{\boldsymbol \Sigma}_{22}^{-1}{\bf X}_2\sim{\cal N}\Big(\big({\boldsymbol \Sigma}_{11}- {\boldsymbol \Sigma}_{12}{\boldsymbol \Sigma}_{22}^{-1}{\boldsymbol \Sigma}_{12}^\top\big)\thetab ,\, {\boldsymbol \Sigma}_{11} - {\boldsymbol \Sigma}_{12}{\boldsymbol \Sigma}_{22}^{-1}{\boldsymbol \Sigma}_{12}^\top\Big)$$ 
is the unique\footnote{To avoid repeating {\it essentially unique} (that is, {\it unique up to sets of ${\rm P}_{\thetab ,\varthetab}$-probability zero for all~$(\thetab,\varthetab)$}) again and again, we tacitly include such null sets in all maximal ancillary $\sigma$-fields} maximal nuisance-ancillary $\sigma$-field at (each) $\thetab$ (see Section~\ref{Sec23}).


The importance of Gaussian shift experiments in this context stems from their role as local limiting experiments in the class of Locally Asymptotically Normal (LAN) experiments.
However, the uniqueness, in the limiting Gaussian shift experiment, of a maximal nuisance-ancillary~$\sigma$-field, does not imply the uniqueness of maximal nuisance-ancillary $\sigma$-fields in the corresponding sequences of local experiments, where, for finite $n$, several maximal nuisance-ancillary $\sigma$-fields may coexist. As a result, it is not clear how optimal inference based on ancillarity arguments in the limiting Gaussian shift experiment can be transported to the sequence of (local) experiments. Before we explain how we resolve this issue, let us give an example.

\begin{Example}\label{ex1}{\rm 
A typical example---studied in detail in Section~\ref{Sec4}---is the case of models with unspecified {\it residual} or {\it innovation} density---namely, experiments of the form 
\begin{equation}\label{unspecifiedf}
{\mathcal E}\coloneqq\Big({\mathbb R}^{n\times d},
{\mathcal B}^{n\times d},  {\cal P}\n\coloneqq \{{\rm P}_{\thetab , f}^{ (n)} : \thetab\in\Thetab , f\in{\cal F}\}
\Big),
\end{equation}
where ${\mathcal B}^{n\times d}$ stands for the Borel $\sigma$-field on ${\mathbb R}^{n\times d}$, $\Thetab\subseteq{\mathbb R}^k$ for some open $k$-dimensional real parameter space,\footnote{For simplicity of notation, we throughout assume $\Thetab = {\mathbb R}^k$.} and $\cal F$ for some broad class of densities to be specified later on---or, since we are to adopt an asymptotic point of view, sequences of experiments
\begin{equation}\label{unspecifiedfn}
{\mathcal E}\n_{\text{\rm global}}\coloneqq\Big({\mathbb R}^{n\times d},
{\mathcal B}^{n\times d}, {\cal P}\n\coloneqq\{{\rm P}_{\thetab , f}^{( n)} : \thetab\in\Thetab , f\in{\cal F}\}
\Big), \quad n\in\mathbb{N},
\end{equation}
where $\thetab$ is a parameter of interest and $f\in{\mathcal F}$, the unspecified density of some {\it residual} or {\it innovation} driving the data-generating process, plays the role of the nuisance. More precisely, the observation, in such a model, is an $n$-tuple~${\bf X}\n = ({\bf X}\n_1,\ldots, {\bf X}\n_n)$ where~${\bf X}\n_i$ is~${\mathbb R^d}$-valued, and there exists a mapping 
\begin{equation}\label{resid}
({\bf X}\n , \thetab ) \mapsto {\bf Z}\n(\thetab )\coloneqq ({\bf Z}\n_1(\thetab ),\ldots, {\bf Z}\n_n(\thetab)),
\end{equation}
from~${\mathbb R}^{n\times d}\times\Thetab$ to ${\mathbb R}^{n\times d}$ (the {\it residual function}\footnote{In time series models, the assumption that the range of the residual function is ${\mathbb R}^{n\times d}$ has to be slightly relaxed due to the dependence of residuals on initial values: for a stationary VAR model of order $p$, for instance, the residual function takes values in ${\mathbb R}^{(n-p)\times d}$ and the correspondence between ${\bf X}\n$ and the residuals involves a $p$-tuple of initial values (the asymptotic impact of the latter, however, is nil).}) such that ${\bf X}\n \sim~\!{\rm P}\n_{\thetab,f}$ if and only if the residuals~${\bf Z}\n_1(\thetab ),\ldots, {\bf Z}\n_n(\thetab )$ are i.i.d.\ with density $f$. This class of experiments covers a large variety of classical semiparametric models such as   single- and multiple-output linear and nonlinear regression,
panel data, MANOVA, VARMA, long-memory, nonlinear, and cointegrated time-series, etc.\ with unspecified error or innovation density. Call such   ${\cal E}\n_{\text{\rm global}}$ a (sequence of) {\it unspecified density experiment(s)}.

Nuisance-ancillarity, in this context, is thus {\it distribution-freeness} with respect to $f\in{\mathcal F}$. It can be shown\footnote{This follows from a theorem by \cite{basu1959}; see Appendix E, Corollary E.1 in the online supplement to Hallin et al. (2021).} that, for $d>1$, the $\sigma$-field generated by the ranks of the first components of the residuals~${\bf Z}\n_i(\thetab )$ is maximal ${\mathcal E}\n$-nuisance-ancillary  at $\thetab$. But so is the $\sigma$-field generated by the ranks of~${\bf Z}\n_i(\thetab )$'s second components, by the ranks of ${\bf Z}\n_i(\thetab )$'s third components, ..., by the ranks of ${\bf Z}\n_i(\thetab )$'s $d$-th components. This yields $d$ distinct maximal nuisance-ancillary $\sigma$-fields, none of which can be enlarged without losing its ancillarity properties (in particular, they are not jointly ancillary) while all of them are overlooking a significant amount of ``distribution-free information'' about the parameter of interest $\thetab$. Which one, if any, is ``best''? As we shall see, asymptotics may offer a solution thanks to the fact that, under Local Asymptotic Normality (LAN), limiting local experiments, \emph{when chosen wisely}, unlike the finite-$n$ ones, admit a \emph{unique} maximal ancillary $\sigma$-field.}
\end{Example}

Our approach to the non-uniqueness of maximal nuisance-ancillary $\sigma$-fields in the sequence of local experiments, is to reformulate the limiting experiment. More precisely,  we extend the limiting {\it Gaussian shift} experiments to  {\it Brownian drift} ones. These Gaussian shift and Brownian drift experiments are equivalent from the point of view of the Le Cam distance: hence, they both qualify, under LAN, as local limiting experiments. Brownian drift experiments, however, are defined on richer $\sigma$-fields. This allows us to analyze the non-uniqueness of maximal nuisance-ancillary $\sigma$-fields in the sequence of local experiments, and choose the ``best'', i.e., the one that converges (in a sense to be made precise) to the unique maximal nuisance-ancillary $\sigma$-field in the (Brownian drift) limit experiment. In the previous example, this will naturally lead us to choose the $\sigma$-field generated by the so-called center-outward residual ranks and signs, see Section~\ref{Sec4}.

\subsection{Outline of the paper} The main contribution of this paper is as follows. Denote by
\begin{equation}\label{localE}
{\mathcal E}\n_{\thetab_0 , \varthetab_0}\coloneqq\Big({\mathcal X}\n , {\mathcal B}\n, {\mathcal P}\n_{\thetab_0 , \varthetab_0} \coloneqq\{{\mathbb P}\n_{\taub , \eta} :  (\taub , \eta)\in 
{\mathbb R}^k\times\Upsilon_{\varthetab_0}\}\Big),\quad n\in {\mathbb N},
\end{equation}
the sequence  of {\it local experiments} localized at $(\thetab_0, \varthetab_0)$, with {\it local parameter} $(\taub ,\eta)$ ($\taub$ the local parameter of interest, $\eta$ the local nuisance) of a LAN sequence
\begin{equation}\label{globalE}
{\mathcal E}\n_{\text{\rm global}}\coloneqq\Big({\mathcal X}\n , {\mathcal B}\n, {\mathcal P}\n =\{{\rm P}\n_{\thetab , \varthetab} :   (\thetab , \varthetab ) \in 
{\mathbb R}^k\times{\mathcal F} \Big) ,\quad n\in {\mathbb N}
\end{equation}
of experiments with parameters $(\thetab, \varthetab)$.
As $n\to\infty$, that local sequence ${\mathcal E}\n_{\thetab_0,\varthetab_0}$ converges, in the Le Cam distance, to a limit experiment usually described as a Gaussian shift ${\cal E}_{{\scriptstyle{\text{\rm shift;}} {(\thetab_0, \varthetab_0)}}}$. This limiting Gaussian shift, however, can be replaced by an equivalent (in the sense of the Le Cam distance) limiting Brownian drift experiment ${\cal E}_{{\scriptstyle{\text{\rm drift;}} {(\thetab_0, \varthetab_0)}}}$ that  
admits a uni\-que~${\cal E}_{{\scriptstyle{\text{\rm drift;}} {(\thetab_0, \varthetab_0)}}}$-ancillary at $\taub = {\boldsymbol 0}$ $\sigma$-field ${\cal B}_{\scriptstyle{\text{\rm drift;}\scriptstyle{\boldsymbol 0}}}^\ddagger$.

However, unlike the Gaussian shift limit, we can now introduce, see Definition~\ref{weakcv}, for each $\thetab_0\in{\mathbb R}^k$, sequences~${\mathcal B}^{\ddagger (n)}_{\thetab_0}$,~$n\in{\mathbb N}\vspace{.5mm}$ of \emph{strongly maximal ${\cal E}\n_{\text{\rm global}}$-nuisance-ancillary at}~$\thetab_0$ sub-$\sigma$-fields of ${\mathcal B}\n$ with the property that (i)~each ${\mathcal B}^{\ddagger (n)}_{\thetab_0}$ is, for fixed~$n$,~${\cal E}\n_{\text{\rm global}}$-maximal nuisance-ancillary at $\thetab_0$ and (ii)~allows, as~$n\to\infty$, for a ``reconstruction'' of ${\cal B}_{\scriptstyle{\text{\rm drift;}\scriptstyle{\boldsymbol 0}}}^\ddagger$, the \emph{unique} maximal nuisance-ancillary $\sigma$-field in ${\cal E}_{{\scriptstyle{\text{\rm drift;}} {(\thetab_0, \varthetab_0)}}}$.

We then show that, when such sequences exist, semiparametric efficiency, at any $(\thetab,\varthetab)$ in~${\mathbb R}^k\times {\cal F}$, can be achieved via~${\mathcal B}^{\ddagger (n)}_{\thetab}$-measurable---hence, finite-$n$ nuisance-free---procedures. This is in sharp contrast with traditional semiparametric methods, based on tangent space projections, where nuisance-freeness is only asymptotic. 

To conclude, we show (Section~\ref{Sec4}) that, in the unspecified density model \eqref{unspecifiedfn},  the sequence of $\sigma$-fields generated by the measure-transportation-based {\it center-outward ranks and signs} of the $\thetab_0$-residuals~\eqref{resid}  is   strongly maximal ${\cal E}\n_{\text{\rm global}}$-nuisance-ancillary at    $\thetab_0$. This implies that the semiparametrically efficiency bounds at~$(\thetab_0, \varthetab_0)$ can be reached via fully distribution-free procedures based on center-outward ranks and signs.

\section{Locally asymptotically normal (LAN) sequences of experiments}
\subsection{Global and local experiments} 
Throughout, we consider the sequence ${\cal E}\n_{\text{\rm global}}$ of global experiments~\eqref{globalE} and the sequences ${\cal E}\n_{\thetab_0, \varthetab_0}$ of local experiments  defined in~\eqref{localE} where, for simplicity, we assume that $\thetab$, $\thetab_0$, and $\taub$ range over $ {\mathbb R}^k$, $\varthetab$ and $\varthetab_0$ over some separable infinite-dimensional vector space $\cal F$, and $\eta$ over some separable  infinite-dimensional vector space~$\Upsilon_{\varthetab_0}$ that possibly depends on $\varthetab_0$. The local (at $(\thetab_0 , \varthetab_0)$) and global proba\-bility distributions are related 
by~${\mathbb P}\n_{\taub,\eta}\coloneqq {\rm P}\n_{{\mathfrak t}\n(\taub), {\mathfrak f}\n(\eta)}$ for some sequences of bijective mappings~${\mathfrak t}\n={\mathfrak t}\n_{\thetab_0}$ from~$ {\mathbb R}^k$ to ${\mathbb R}^k$ and ${\mathfrak f}\n={\mathfrak f}_{\varthetab_0}\n$ from $\Upsilon_{\vartheta_0}$ to $\cal F$ such that ${\mathfrak t}_{\thetab_0}\n({\boldsymbol 0}) = \thetab_0$ and, denoting by~${\boldsymbol 0}_{\Upsilon_{\varthetab_0}}$ the origin in~$\Upsilon_{\varthetab_0}$, ${\mathfrak f}\n _{\varthetab_0}({\boldsymbol 0}_{\Upsilon_{\varthetab_0}}) = \varthetab_0$ for all~$n$. In traditional situations, we typically have~${\mathfrak t}_{\thetab_0}\n(\taub)$ of the form~$\thetab_0 + n^{-1/2}\taub$ and, for some smooth mapping ${\mathfrak f}_{\varthetab_0}$ from $\Upsilon_{\varthetab_0}$ to $\cal F$ satisfying~${\mathfrak f}({\boldsymbol 0}_{\Upsilon_{\varthetab_0}}) = \varthetab_0$, ${\mathfrak f}\n _{\varthetab_0}(\eta)= {\mathfrak f}_{\varthetab_0}(n^{-1/2}\eta)$ so that, in~\eqref{localE}, ${\mathbb P}\n_{\taub,\eta} ={\rm P}\n_{\thetab_0 +n^{-1/2}\taub, {\mathfrak f}_{\varthetab_0}(n^{-1/2}\eta) }$. 

Note that, for fixed $n$,  the experiments ${\cal E}\n_{\text{\rm global}}$ and ${\mathcal E}\n_{\thetab_0 , \varthetab_0}$ are sharing the same observation spaces and the same families of distributions: ${\cal P}\n$ and ${\cal P}\n_{\thetab_0 , \varthetab_0}$, indeed, only differ by their parametrizations:  $(\thetab , \varthetab)$ for ${\cal P}\n$, $(\taub, \eta )$ for ${\cal P}\n_{\thetab_0, \varthetab_0}$, where the correspondence between~$(\thetab , \varthetab)$ and~$(\taub, \eta )$ is bijective (but depends on $\thetab_0, \varthetab_0$, and $n$). In particular, 
$${\mathfrak f}\n_{\varthetab_0}(\Upsilon_{\varthetab_0}) = {\cal F} \quad\text{and}\quad  {\cal P}\n_{\thetab_0}\coloneqq \{{\rm P}\n_{\thetab_0 , \varthetab} : \varthetab\in{\cal F}\} = \{{\mathbb P}\n_{{\boldsymbol 0}, \eta} : \eta\in\Upsilon_{\varthetab_0}\}, \quad \text{for all $n\in {\mathbb N}$}.
$$
In view of the definition of nuisance-ancillarity, we have the following lemma.

\begin{lemma} A sub-$\sigma$-field ${\cal B}\n_{\thetab}$ of  ${\cal B}\n$ is  ${\cal E}\n_{\thetab , \varthetab}$-nuisance-ancillary at $\taub = {\boldsymbol 0}$ if and only if it is~${\cal E}\n_{\text{\rm global}}$-nuisance-ancillary at $\thetab$.
\end{lemma}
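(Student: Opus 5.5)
The proof is a direct unwinding of the definitions, using the identity of families of distributions noted just before the statement. Reading the lemma with $\thetab$ playing the role of $\thetab_0$, and an arbitrary $\varthetab$ playing the role of $\varthetab_0$, both notions concern the fixed-$n$ observation space $({\mathcal X}\n,{\mathcal B}\n)$. Being ${\cal E}\n_{\text{\rm global}}$-nuisance-ancillary at $\thetab$ means: for every $B\in{\cal B}\n_{\thetab}$, the map $\varthetab'\mapsto{\rm P}\n_{\thetab,\varthetab'}(B)$ is constant on ${\cal F}$. Being ${\cal E}\n_{\thetab,\varthetab}$-nuisance-ancillary at $\taub={\boldsymbol 0}$ means: for every such $B$, the map $\eta\mapsto{\mathbb P}\n_{{\boldsymbol 0},\eta}(B)$ is constant on $\Upsilon_{\varthetab}$.

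The key step is the identity
$${\mathbb P}\n_{{\boldsymbol 0},\eta}={\rm P}\n_{{\mathfrak t}\n_{\thetab}({\boldsymbol 0}),\,{\mathfrak f}\n_{\varthetab}(\eta)}={\rm P}\n_{\thetab,\,{\mathfrak f}\n_{\varthetab}(\eta)},$$
which holds because ${\mathfrak t}\n_{\thetab}({\boldsymbol 0})=\thetab$. Since ${\mathfrak f}\n_{\varthetab}$ is a bijection from $\Upsilon_{\varthetab}$ onto ${\cal F}$, the two collections $\{{\mathbb P}\n_{{\boldsymbol 0},\eta}:\eta\in\Upsilon_{\varthetab}\}$ and $\{{\rm P}\n_{\thetab,\varthetab'}:\varthetab'\in{\cal F}\}$ coincide as sets of measures; this is the equality ${\cal P}\n_{\thetab}=\{{\mathbb P}\n_{{\boldsymbol 0},\eta}\}$ displayed above. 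Consequently, for each fixed $B$, the function $\eta\mapsto{\mathbb P}\n_{{\boldsymbol 0},\eta}(B)$ is the composition of $\varthetab'\mapsto{\rm P}\n_{\thetab,\varthetab'}(B)$ with a bijection. One function is constant exactly when the other is.

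Running this equivalence over all $B\in{\cal B}\n_{\thetab}$ gives both implications at once. If one also tracks the null-set conventions of the footnote, note that the null sets of the two families are the same, because the families themselves are equal. The only point requiring care is bookkeeping, not analysis. The local experiment ${\cal E}\n_{\thetab,\varthetab}$ depends on the chosen $\varthetab$, but its $\taub={\boldsymbol 0}$ slice is always the full specified-$\thetab$ family ${\cal P}\n_{\thetab}$, by surjectivity of ${\mathfrak f}\n_{\varthetab}$. So the equivalence holds for every choice of $\varthetab\in{\cal F}$, and no step here is a genuine obstacle.
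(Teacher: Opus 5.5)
Your proposal is correct and is essentially the paper's argument. The paper gives no separate proof: it reads the lemma off the displayed identity ${\cal P}\n_{\thetab_0}=\{{\rm P}\n_{\thetab_0,\varthetab}:\varthetab\in{\cal F}\}=\{{\mathbb P}\n_{{\boldsymbol 0},\eta}:\eta\in\Upsilon_{\varthetab_0}\}$, which rests on ${\mathfrak t}\n_{\thetab_0}({\boldsymbol 0})=\thetab_0$ and ${\mathfrak f}\n_{\varthetab_0}(\Upsilon_{\varthetab_0})={\cal F}$ (only surjectivity is really needed), combined with the definition of nuisance-ancillarity, exactly as you do.
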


\subsection{Limiting experiments: from Gaussian shift to Brownian drift}\label{Sec22}
Under LAN, the sequences ${\cal E}\n_{\thetab_0, \varthetab_0}$ of local experiments  defined in~\eqref{localE}  converge {\it weakly}, in the Le Cam distance, to limiting experiments of the form 
\begin{equation}\label{limitexp}
{\cal E}_{\thetab_0, \varthetab_0}\coloneqq \Big\{{\cal X}, {\cal B}, {\cal P}\coloneqq\{{\mathbb P}_{\taub,\eta}: \taub\in{\mathbb R}^k, \eta\in \Upsilon_{\varthetab_0}
\}
\Big\}.\end{equation}
The limiting experiment ${\cal E}_{\thetab_0, \varthetab_0}$, however, is not uniquely defined (the Le Cam distance is only a {\it pseudo-distance}). For any two probability measures $\rm P$ and $\rm Q$ defined over some common probability space, denote,  as usual, by $\rm dP/dQ$ the Radon-Nikodym derivative of the component of $\rm P$ which is absolutely continuous with respect to $\rm Q$: then, any experiment 
\begin{equation}\label{limitexp'}
{\cal E}\pr_{\thetab_0, \varthetab_0}\coloneqq \Big\{{\cal X}\pr, {\cal B}\pr, {\cal P}\pr\coloneqq\{{\mathbb P}\pr_{\taub,\eta}: \taub\in{\mathbb R}^k, \eta\in \Upsilon_{\varthetab_0}
\}
\Big\}\end{equation}
such that the joint distribution, under ${\mathbb P}\pr_{{\boldsymbol 0}, 0}$, of the $\ell$-tuple of log-likelihood ratios
\[
\big\{\log({\rm dP}\pr_{\taub_1, \eta_1}/{\rm dP}\pr_{{\boldsymbol 0}, 0}),\ldots, \log({\rm dP}\pr_{\taub_\ell, \eta_\ell}/{\rm dP}\pr_{{\boldsymbol 0}, 0})
\big\}
\]
coincides, for all (finite) $\ell $ and all  $(\taub_1,\eta_1),\ldots,(\taub_\ell,\eta_\ell)$ in ${\mathbb R}^k\times \Upsilon_{\varthetab_0}$, with the joint distribution, under ${\mathbb P}_{{\boldsymbol 0}, 0}$, of the  $\ell$-tuple 
\[
\big\{\log({\rm dP}_{\taub_1, \eta_1}/{\rm dP}_{{\boldsymbol 0}, 0}),\ldots, \log({\rm dP}_{\taub_\ell, \eta_\ell}/{\rm dP}_{{\boldsymbol 0}, 0})
\big\}
\]
is equivalent, in the sense of the Le Cam distance, to  ${\cal E}_{\thetab_0, \varthetab_0}$. Hence, ${\cal E}\pr_{\thetab_0, \varthetab_0}$ also constitutes a limit experiment for the local sequence ${\cal E}\n_{\thetab_0, \varthetab_0}$.

Under LAN, the limiting experiment ${\cal E}_{\thetab_0, \varthetab_0}$ is usually described as a Gaussian shift experiment ${\cal E}_{\text{shift}} = {\cal E}_{{\scriptstyle{\text{\rm shift;}} {(\thetab_0,\varthetab_0)}}}$ with observation 
\begin{equation*}
{\Deltab}_{\text{{\rm{shift}}}} = {\Deltab}_{{
 \scriptstyle{\text{\rm shift;}} {(\thetab_0, \varthetab_0)}}}
\coloneqq 
\left(\CSb_{\text{\rm int}} ^\top\coloneqq\left(\CS_{
{\text{\rm int}},1} ,\ldots, \CS_{{\text{\rm int}},k}\right),
\left\{ \CS_{{\text{nuis}},\lpnui}\big\vert\lpnui\in\LPnui\right\}\right)^\top; 
\end{equation*}
since $\Upsilon_{\varthetab_0}$ here is infinite-dimensional, ${\Deltab}_{\text{{\rm{shift}}}}$ is a Gaussian process rather than a Gaussian vector,  with, under~$\prob_{\lpint ,\lpnui}$, for any~$\ell\in\mathbb{N}$ and any~$(\lpint,  \lpnui_1,\ldots,\lpnui_\ell)\in{\mbR^k}\times\LPnui^\ell_{\varthetab_0}$, full-rank (Gaussian) finite-dimensional marginals\vspace{1mm} 
\begin{equation}\label{margDshift}
\Deltab_{\text{shift}}^{(\ell)}	\coloneqq\left(\begin{array}{c}\CSb_{\text{\rm int}} \eqqcolon \Deltab_I \\
		\CS_{{\text{nuis}},\eta_1}\eqqcolon\CS_{I\!I,\lpnui_1}\\  \vdots \\  \CS_{{\text{nuis}},\eta_\ell}\eqqcolon \CS_{I\!I,\lpnui_{\ell}}
	\end{array}
	\right)  \sim{\cal N}
	\left(\left(\begin{array}{c}\FisherInfob_{I\!,I}\lpint +\FisherInfob_{I\!, \lpnui}
		\\ 
		\FisherInfob_{I\!, \lpnui_1}^\top\lpint+\FisherInfo_{\lpnui_1 \lpnui} 
		\\ 
		\vdots 
		\\  
		\FisherInfob_{I\!, \lpnui_{\ell}}^\top\lpint+\FisherInfo_{\lpnui_{\ell} \lpnui} 
	\end{array} \right),\,\,  \FisherInfob_{\eta_1,\ldots,\eta_{\ell}}\coloneqq 
	\left(\begin{array}{cccc}
		\FisherInfob_{I\!,I}&\FisherInfob_{I\!, \lpnui_1}&\!\!\!\!\ldots\!\!\!\!&\FisherInfob_{I\!, \lpnui_{\ell}}\\ 
		\FisherInfob_{I\!, \lpnui_1}^\top&\FisherInfo_{\lpnui_1 \lpnui_1}&\!\!\!\!\ldots\!\!\!\!&\FisherInfo_{\lpnui_1 \lpnui_{\ell}}
		\\
		\vdots&&\!\!\!\!\ldots\!\!\!\!&
		\\
		\FisherInfob_{I\!, \lpnui_{\ell}}^\top&\FisherInfo_{\lpnui_{\ell} \lpnui_1}&\!\!\!\!\ldots\!\!\!\!&\FisherInfo_{\lpnui_{\ell} \lpnui_{\ell}}
	\end{array}\right)
	\right).
\end{equation}
The $k\times k$ information matrix 
$\FisherInfob_{I\!,I}=\FisherInfob_{I\!,I}(\pint_0, \varthetab_0)$, the $k\times 1$ cross-information quan\-tities~$\FisherInfob_{I\!, \eta} = \FisherInfob_{I\!, \lpnui} (\pint_0, \varthetab_0)$, $\eta\in\Upsilon_{\varthetab_0}$, and  the scalar information and cross-information quanti\-ties~$\FisherInfo_{\lpnui \lpnui^\prime}=\FisherInfo_{\lpnui \lpnui^\prime} (\pint_0, \varthetab_0)$,  $(\lpnui,\,  \lpnui^\prime)\in\Upsilon^2_{{\varthetab_0}}$ depend on  the particular LAN experiment under consideration; for~$\lpnui = 0$, $\FisherInfob_{I\!, 0}=\mfzero$ and~$\FisherInfo_{ 0\, \lpnui^\prime}=0$ for all $\lpnui^\prime\in\Upsilon_{\varthetab_0}$.  
Under $\prob_{\mfzero,0}$, $\Deltab_{\text{\rm shift}} $ is full-rank Gaussian and has finite-dimensional Gaussian marginals with mean $\boldsymbol 0$ and  (since~$ \FisherInfob_{\eta_1,\ldots,\eta_{\ell}}$ does not depend on $(\taub , \eta)$)  the same covariance matrix $ \FisherInfob_{\eta_1,\ldots,\eta_{\ell}}$ as in \eqref{margDshift}\vspace{1mm}:
\begin{equation}\label{Normal0}
	\Deltab_{\text{shift}}^{(\ell)} = \left(\begin{array}{c}\CSb_{\text{\rm int}} \eqqcolon \Deltab_I \\
		\CS_{{\text{nuis}},\eta_1}\eqqcolon\CS_{I\!I,\lpnui_1}\\  \vdots \\  \CS_{{\text{nuis}},\eta_\ell}\eqqcolon \CS_{I\!I,\lpnui_{\ell}}
	\end{array}
	\right)\sim {\cal N}\left(\mfzero,\,  \FisherInfob_{\eta_1,\ldots,\eta_{\ell}}= \left(\begin{array}{cccc}
		\FisherInfob_{I\!,I}&\FisherInfob_{I\!, \lpnui_1}&\ldots&\FisherInfob_{I\!, \lpnui_k}\\ 
		\FisherInfob_{I\!, \lpnui_1}^\top&\FisherInfo_{\lpnui_1 \lpnui_1}&\ldots&\FisherInfo_{\lpnui_1 \lpnui_k}
		\\
		\vdots&&\ldots&
		\\
		\FisherInfob_{I\!, \lpnui_k}^\top&\FisherInfo_{\lpnui_k \lpnui_1}&\ldots&\FisherInfo_{\lpnui_k \lpnui_k}
	\end{array}\right)\right).
\end{equation}
It follows that the log-likelihood ratios for the marginal Gaussian    experiment with observation~$\Deltab_{\text{shift}}^{(\ell)}$ take the typical Gaussian form 
\begin{equation}\label{DlogL}
	\log \dfrac{
		\dd\prob_{\lpint ,\lpnui}
	}{
		\dd\prob_{\mfzero,0}
	}(\Deltab_{\text{shift}}^{(\ell)}) =
	\CSb_I^\top\lpint
	+ \CS_{I\!I\!,\lpnui} 
	-\frac{1}{2}\left[
	\lpint^\top\FisherInfob_{I\!,I}\lpint + 2\lpint^\top\FisherInfob_{I\!,\lpnui}  + \FisherInfo_{\lpnui\lpnui}
	\right]. 
\end{equation}

The parameters $\lpint$ and~$\lpnui$ in \eqref{margDshift} are shifting the finite-dimensional marginal Gaussian distributions of $\Deltab_{\text{\rm shift}} $ without affecting their covariance structure, whence the classical terminology, for ${\cal E}_{\text{shift}}$, of   {\it Gaussian shift experiment}.

Let us show, for the reasons described in the introduction, that the same limiting experiment  ${\cal E}_{\thetab_0, \varthetab_0}$ also can be described as a  {\it Brownian drift} experiment~${\mathcal E}_{\text{{\rm drift}}} = {\cal E}_{{\scriptstyle{\text{\rm drift;}} {(\thetab_0, \varthetab_0)}}}$, living on a ``richer'' $\sigma$-field than ${\cal E}_{\text{shift}} = {\cal E}_{{\scriptstyle{\text{\rm shift;}} {(\thetab_0, \varthetab_0)}}}$. More precisely, the distributions $\prob_{\lpint ,\lpnui}$ in~${\mathcal E}_{\text{{\rm drift}}}$ are described as those of a family
\begin{eqnarray}\nonumber
 \Deltab_{\text{\rm drift}} &=& \Deltab_{\text{\rm drift};(\thetab_0,\varthetab_0)}\\
\label{Bmotions} 
 &\coloneqq& \Big\{\!\Deltab_{{\text{\rm int}}} ^\top(u)
	\!\coloneqq\!\big(\Delta_{{\text{\rm int}},1}(u),\ldots,\Delta_{{\text{\rm int}},k}(u)\big)\!, \Deltab_{{\text{\rm nuis}}}(u)\!\coloneqq\!  \big\{\CS_{{\text{\rm nuis}},\lpnui}(u)\vert \lpnui\in\Upsilon_{\varthetab_0}\big\} \! :\!\,u\in [0,1]\!\Big\}
\end{eqnarray}
of multivariate Brownian motions defined over some appropriate (but irrelevant) probability space. 
Under $\prob_{\mfzero,0}$, these Brownian motions have zero drift and the same (full-rank) finite-dimensional covariance matrices $\FisherInfob_{\eta_1,\ldots,\eta_\ell}$ as in \eqref{margDshift}---recall that the mean and covariance structure at $u=1$ completely determine the distribution of a multivariate Brownian motion. Under $\prob_{\lpint ,\lpnui}$,  the components~$\Deltab_{{\text{\rm int}}}$ and~$\CS_{{\text{\rm nuis}}, \lpnui^\prime}$ of $\Deltab_{\text{\rm drift}}$ still are Brownian motions, with the same covariance structure  as under~$\prob_{\mfzero,0}$, but now with drifts  $\FisherInfob_{I\!, I}\lpint +\FisherInfob_{I\!,\lpnui}$ and~$\FisherInfob_{I\!, \lpnui^\prime}^\top\lpint+\FisherInfo_{\lpnui \lpnui^\prime}$, respectively.    This characterizes all finite-dimensional marginals of $\prob_{\mfzero,0}$ and $\prob_{\lpint ,\lpnui}$, hence~$\prob_{\lpint ,\lpnui}$ and $\prob_{\mfzero,0}$ themselves and their log-likelihood ratios.

We then have the following result.
\begin{prop} The limit experiments $\E_{\pint_0 ,f_0}$ described via~$\Deltab_{\text{\rm shift} ;(\thetab_0,\varthetab_0)}$ as a Gaussian shift experiment $ {\mathcal E}_{\text{\rm shift};(\thetab_0,\varthetab_0)}$ and via $\Deltab_{\text{\rm drift};(\thetab_0,\varthetab_0)}$ as a Brownian drift experiment~$ {\mathcal E}_{\text{{\rm drift}};(\thetab_0,\varthetab_0)}$ are equivalent in the sense of the Le Cam distance. 
\end{prop}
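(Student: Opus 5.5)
The plan is to use the criterion recalled just before the statement: two experiments indexed by the same parameter set $\mbR^k\times\Upsilon_{\varthetab_0}$ are Le Cam equivalent as soon as, for every finite $\ell$ and every $(\taub_1,\eta_1),\ldots,(\taub_\ell,\eta_\ell)$, the joint laws under the respective $(\mfzero,0)$-distributions of the log-likelihood ratios coincide. So it suffices to compute the likelihood ratios $\dd\prob_{\taub,\eta}/\dd\prob_{\mfzero,0}$ in ${\mathcal E}_{\text{drift}}$ and show that they are the same functionals of $\Deltab_{\text{drift}}(1)$ as those in \eqref{DlogL} are of $\Deltab_{\text{shift}}$. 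Because $\Deltab_{\text{drift}}(1)$ has under $\prob_{\mfzero,0}$ exactly the law \eqref{Normal0} of $\Deltab_{\text{shift}}$, the joint laws then coincide.

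First, I will reduce to finitely many coordinates. The process $\Deltab_{\text{drift}}$ is indexed by $u\in[0,1]$ and $\eta\in\Upsilon_{\varthetab_0}$, and its $\sigma$-field is generated by finite-dimensional cylinders. Hence a likelihood ratio can be identified from finite-dimensional marginals provided the candidate density is consistent across them. Fix $\eta$, $\eta_1,\ldots,\eta_\ell$ and consider the finite-dimensional Brownian motion $W(u)=(\Deltab_I(u)^\top,\CS_{I\!I,\eta}(u),\CS_{I\!I,\eta_1}(u),\ldots)^\top$ with covariance $u\,\FisherInfob$ (full rank). Under $\prob_{\taub,\eta}$ this process acquires the drift $u\,\FisherInfob\,\mathbf{c}$, where $\mathbf{c}=(\taub^\top,1,0,\ldots,0)^\top$. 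This is exactly how the drifts $\FisherInfob_{I,I}\taub+\FisherInfob_{I,\eta}$ and $\FisherInfob_{I,\eta'}^\top\taub+\FisherInfo_{\eta\eta'}$ are built; linearity of $\eta\mapsto\CS_{I\!I,\eta}$ and of the information quantities justifies treating $\eta$ as an extra coordinate.

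Second, I will apply the Cameron--Martin--Girsanov formula for a Brownian motion with covariance $\FisherInfob$ and constant drift $\FisherInfob\mathbf{c}$. It gives
\[
\frac{\dd\prob_{\taub,\eta}}{\dd\prob_{\mfzero,0}}=\exp\Big(\mathbf{c}^\top W(1)-\tfrac12\mathbf{c}^\top\FisherInfob\,\mathbf{c}\Big)
=\exp\Big(\CSb_I(1)^\top\taub+\CS_{I\!I,\eta}(1)-\tfrac12\big[\taub^\top\FisherInfob_{I,I}\taub+2\taub^\top\FisherInfob_{I,\eta}+\FisherInfo_{\eta\eta}\big]\Big).
\]
This depends only on the terminal value $W(1)$, which is the well-known fact that $W(1)$ is sufficient for the drift. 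Because the expression involves only $\CSb_I(1)$ and $\CS_{I\!I,\eta}(1)$, it is the same for every choice of the added coordinates $\eta_1,\ldots,\eta_\ell$ and every finite set of time points. The family of finite-dimensional densities is therefore consistent, and it defines the Radon--Nikodym derivative on the full $\sigma$-field, because a measure agreeing with $\exp(\cdot)\,\dd\prob_{\mfzero,0}$ on a generating $\pi$-system agrees everywhere.

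Third, I will conclude the proof. The log-likelihood ratio in ${\mathcal E}_{\text{drift}}$ is the map \eqref{DlogL} evaluated at $\Deltab_{\text{drift}}(1)$. Since $\mathcal L(\Deltab_{\text{drift}}(1)\mid\prob_{\mfzero,0})=\mathcal L(\Deltab_{\text{shift}}\mid\prob_{\mfzero,0})$ jointly over all finite collections of $\eta$'s, the $\ell$-tuples of log-likelihood ratios have identical laws, and the criterion yields equivalence. Equivalently, $u\mapsto\Deltab_{\text{drift}}(1)$ is a sufficient statistic mapping ${\mathcal E}_{\text{drift}}$ onto ${\mathcal E}_{\text{shift}}$. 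I expect the main obstacle to be the infinite-dimensional index $\eta$, that is, making rigorous the passage from finite-dimensional Girsanov densities to the likelihood ratio on the path-and-$\eta$-indexed $\sigma$-field. I would handle it by the consistency argument above, relying on the linear structure of $\Upsilon_{\varthetab_0}$.
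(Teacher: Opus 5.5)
Your proposal is correct and follows the same route as the paper. Girsanov's theorem shows that the log-likelihood ratios in the drift experiment are the functional \eqref{DlogL} evaluated at $\CSdrift(1)$, and $\CSdrift(1)$ has under $\prob_{\mfzero,0}$ the same finite-dimensional laws as $\CSshift$, so the likelihood-ratio laws coincide. Your extra consistency and $\pi$-system argument, which fixes the Radon--Nikodym derivative on the full $(u,\eta)$-indexed $\sigma$-field, makes explicit a step the paper takes for granted.
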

\noindent{\sc Proof.} Under $\prob_{\lpint ,\lpnui}$, the components~$\Deltab_{{\text{\rm int}}}$ and~$\CS_{{\text{\rm nuis}}, \lpnui^\prime}$ of $\Deltab_{\text{\rm drift}}$  are Brownian motions, with the same covariance structure  as in \eqref {margDshift} and drifts  $\FisherInfob_{I\!, I}\lpint +\FisherInfob_{I\!,\lpnui}$ and~$\FisherInfob_{I\!, \lpnui^\prime}^\top\lpint+\FisherInfo_{\lpnui \lpnui^\prime}$, respectively. By Girsanov's theorem, the log-likelihood ratios $\log {
		\dd\prob_{\lpint ,\lpnui}
	}/{
		\dd\prob_{\mfzero,0}
	}$ take the form 
\begin{equation}\label{BrownlogL}
	\log \dfrac{
		\dd\prob_{\lpint ,\lpnui}
	}{
		\dd\prob_{\mfzero,0}
	}(\CSdrift
	)
	=
	\CSb_I^\top(1)\lpint 
	+ \CS_{I\!I\!,\lpnui}(1)
	\mbox{}-\frac{1}{2}\left[
	\lpint^\top\FisherInfob_{I\!, I}\lpint + 2\lpint^\top\FisherInfob_{I\!,\lpnui}  + \FisherInfo_{\lpnui\lpnui}
	\right].
\end{equation}
Note that \eqref{BrownlogL} and \eqref{DlogL}, with  $\CSb_I(1)$ and $ \CS_{I\!I\!,\lpnui}(1)$ substituted for $\CSb_I$ and $\CS_{I\!I\!,\lpnui}$, coincide and that, moreover,  
$
\CSdrift(1)\coloneqq \big\{\CS_{I\!,1}(1),\ldots,\CS_{I\! ,p}(1), \{\CS_{I\!I\!,\lpnui}(1): \lpnui\in\LPnui\}\big\}
$ 
and ${\CSb}_{\text{{shift}}}$ are sharing, under $(\lpint, \lpnui)=(\mfzero, 0)$, the same finite-dimensional Gaussian distributions \eqref{Normal0}. It follows that the distributions of  
$$ \left( 
\log \dfrac{
	\dd\prob_{\lpint_1 ,\lpnui_1}
}{
	\dd\prob_{\mfzero,0}
}(\CSdrift ),\ldots ,
\log \dfrac{
	\dd\prob_{\lpint_k ,\lpnui_\ell}
}{
	\dd\prob_{\mfzero,0}
}(\CSdrift )
\right)$$ 
under $\CSdrift\sim \prob_{\mfzero,0}$, and those of 
$$ \left( 
\log \dfrac{
	\dd\prob_{\lpint_1 ,\lpnui_1}
}{
	\dd\prob_{\mfzero,0}
}(\CSshift ),\ldots ,
\log \dfrac{
	\dd\prob_{\lpint_k ,\lpnui_\ell}
}{
	\dd\prob_{\mfzero,0}
}(\CSshift)
\right)$$ 
under $\CSshift\sim \prob_{\mfzero,0}$ coincide for any $k$ and $\ell$ and any $k\ell$-tuple $(\lpint_1,\lpnui_1),\ldots,  (\lpint_k,\lpnui_\ell)$. From this, we conclude that, for all $(\thetab_0,\varthetab_0)$, the limiting Gaussian shift experiment $ {\mathcal E}_{\text{\rm shift}; (\thetab_0,\varthetab_0)} $ and the limiting Brownian drift experiment $ {\mathcal E}_{\text{\rm drift}; (\thetab_0,\varthetab_0)}$ coincide in the sense of the Le Cam distance.\hfill$\square$\medskip
 
We focus, from now on, on the Brownian drift representations ${\mathcal E}_{\text{\rm drift}; (\thetab_0,\varthetab_0)}$ of LAN local limiting experiments.

\subsection{Sufficiency and ancillarity in Brownian drift experiments}\label{Sec23} 
The concept of (bounded) completeness, in connection with sufficiency and ancillarity, is playing a major role at the foundations of statistical inference: unbiased point estimation (the so-called Lehmann-Scheff\' e Theorem), unbiased testing (similarity and Neyman $\alpha$-structure), and statistical decision in the presence of nuisance parameters. These subjects have  generated an abundant literature, some controversy, and several puzzles that remain unsettled or undecidable. Recall that the main problem, from the perspective of this paper, with finite-$n$ nuisance-ancillarity is the non-uniqueness of maximal ancillary $\sigma$-fields. Let us show that this uniqueness problem does not arise in the limiting Gaussian shift or Brownian drift experiments.

Consider the limiting  Brownian drift experiment ${\mathcal E}_{\text{{\rm drift}}} =  {\mathcal E}_{\text{{\rm drift}}; (\thetab_0,\varthetab_0)}$ as described in Section~\ref{Sec22}. Call {\it nuisance experiment} the subexperiment ${\mathcal E}_{\text{{\rm drift}}}^{\boldsymbol 0} =  {\mathcal E}_{\text{{\rm drift}}; (\thetab_0,\varthetab_0)}^{\boldsymbol 0}$  of   $ {\mathcal E}_{\text{\rm drift}}$ associated with the subfamily
$\{\prob_{\mfzero,\eta}:\, \eta\in\Upsilon_{\varthetab_0}\}$ of $\{\prob_{\taub,\eta} :\,  \taub\in{\mathbb R}^k, \eta\in\Upsilon_{\varthetab_0}\}$ and call {\it (minimal) nuisance-sufficient} and {\it (maximal) nuisance-ancillary} any $\sigma$-field or statistic which is (minimal) sufficient or (maximal) ancillary in this nuisance experiment ${\mathcal E}_{\text{\rm drift}}^{\boldsymbol 0}$. Recall that a sub-$\sigma$-field ${\cal B}_0\subseteq{\cal B}$ is called  {\it (boundedly) complete} in ${\mathcal E}_{\text{\rm drift}}^{\boldsymbol 0}$ if, denoting by $\zeta$ a (boun\-ded)~${\cal B}_0$-measurable random variable and writing ${\mathbb E}_{\mfzero,\eta}$ for expectations under $\prob_{\mfzero,\eta}$, for all~$\eta\in\Upsilon_{\varthetab_0}$,
$${\mathbb E}_{\mfzero,\eta}(\zeta )=0\ \text{ for all }\  \eta\in\Upsilon_{\varthetab_0}\quad \text{ implies }\quad \zeta =0\ \  \prob_{\mfzero,\eta}\text{-a.s.}  \text{ for all }\  \eta\in\Upsilon_{\varthetab_0}.$$ When the (full) $\sigma$-field $\cal B$ in an experiment ${\cal E}=(\Omega, {\cal B}, {\cal P})$ is (boundedly) complete, we also say that the experiment $\cal E$  is (boundedly) complete.

The sufficiency, completeness, and ancillarity properties of the nuisance subexperiment~${\mathcal E}_{\text{\rm drift}}^{\boldsymbol 0}$  of~$ {\mathcal E}_{\text{\rm drift}}$ then can be summarized as follows  (recall that $\Upsilon_{\varthetab_0}$ throughout is assumed to be separable).
\begin{prop}\label{UniqueAnc} 
In the Brownian drift experiment ${\mathcal E}_{\text{{\rm drift}}} = {\mathcal E}_{\text{{\rm drift}}; (\thetab_0,\varthetab_0)}$ with observation $\Deltab_{\text{\rm drift}} \!=\! \Deltab_{\text{\rm drift};(\thetab_0,\varthetab_0)}$ defined in \eqref{Bmotions}, let 
$\FieldSuf\coloneqq \sigma\left(\left\{\dfrac{
		\dd\prob_{{\boldsymbol 0} ,\lpnui}}{\dd\prob_{\mfzero,0}}(\Deltab_{\text{\rm drift}})	: \lpnui\in\LPnui_{\varthetab_0} \right\}\right)$.
Then
\begin{enumerate}
\item[(i)] $\FieldSuf = \sigma \left(\big\{\Delta_{{\text{\rm nuis}},\eta}(1): \lpnui\in\LPnui_{\varthetab_0}\big\} \right)$;
\item[(ii)] $\FieldSuf$ (equivalently, the collection of statistics $\big\{\Delta_{{\text{\rm nuis}},\eta}(1)  : \, \eta \in\Upsilon_{\varthetab_0}\big\}$) is nuisance-minimal sufficient (${\mathcal E}_{\text{\rm drift}}^{\boldsymbol 0}$-minimal sufficient);
\item[(iii)] $\FieldSuf$ (equivalently, the collection  of statistics 
$\big\{\Delta_{{\text{\rm nuis}},\eta}(1) : \, \eta \in\Upsilon_{\varthetab_0}\big\}$)
is boundedly complete in~${\mathcal E}_{\text{\rm drift}}^{\boldsymbol 0}$; 
\item[(iv)] $\sigma\left( \Deltab_{\text{\rm drift}}\right)$ is sufficient and  boundedly complete in ${\mathcal E}_{\text{\rm drift}}$ (equivalently, ${\mathcal E}_{\text{\rm drift}}$ is boundedly complete); 
\item[(v)] for any $\eta^\prime\in\Upsilon_{\varthetab_0}$, $\left\{\big\{B_\eta(u) \coloneqq \CS_{{\text{\rm nuis}},\lpnui}(u) -u\CS_{{\text{\rm nuis}},\lpnui}(1),\ u\in[0,1]\big\} :\, \eta \in\Upsilon_{\varthetab_0}\right\}$ is an (infinite-dimensional) Brownian bridge  under $\prob_{{\boldsymbol 0},\lpnui^\prime}$,  hence is ancillary in the nuisance experiment ${\mathcal E}_{\text{\rm drift}}^{\boldsymbol 0}$;
\item[(vi)] ${\mathcal B}^\ddagger
\coloneqq \sigma\left(\{B_{\lpnui}(u) : \lpnui\in\LPnui_{\varthetab_0},\, u\in[0,1]\}\right)$ is the unique maximal nuisance-ancillary $\sigma$-field in~${\mathcal E}_{\text{\rm drift}}$.
\end{enumerate}
\end{prop}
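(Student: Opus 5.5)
The plan is to read everything off the Girsanov form \eqref{BrownlogL} of the likelihood ratios, together with the elementary structure of Gaussian processes. Throughout, I reduce questions about the infinite family indexed by $\Upsilon_{\varthetab_0}$ to finite subfamilies $\eta_1,\ldots,\eta_\ell$. This is legitimate because $\Upsilon_{\varthetab_0}$ is separable, and I will assume $\eta\mapsto\Delta_{\text{\rm nuis},\eta}$ is linear and $L^2$-continuous, as LAN provides. Every $\sigma$-field involved is then generated by countably many coordinates, and a monotone-class argument transfers finite-dimensional statements to the whole family.

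For (i), I specialize \eqref{BrownlogL} to $\taub=\mfzero$, which gives $\log(\dd\prob_{\mfzero,\eta}/\dd\prob_{\mfzero,0})=\Delta_{\text{\rm nuis},\eta}(1)-\frac12\FisherInfo_{\eta\eta}$. Each likelihood ratio is therefore a strictly monotone function of $\Delta_{\text{\rm nuis},\eta}(1)$, and conversely, which yields the equality of $\sigma$-fields.

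For (ii), sufficiency follows from the factorization theorem, since the densities with respect to $\prob_{\mfzero,0}$ are $\FieldSuf$-measurable. Minimality holds because $\FieldSuf$ is generated by the likelihood ratios themselves, and for a dominated family on a separable index set this is the standard characterization of the minimal sufficient $\sigma$-field.

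For (iii), fix a bounded $\FieldSuf$-measurable $\zeta$ with $\mathbb E_{\mfzero,\eta}\zeta=0$ for all $\eta$. Restrict $\eta$ to the linear span of $\eta_1,\ldots,\eta_\ell$, so that $\eta=\sum c_j\eta_j$ and $\Delta_{\text{\rm nuis},\eta}(1)=\sum c_j\Delta_{\text{\rm nuis},\eta_j}(1)$. The family $\{\prob_{\mfzero,\eta}\}$, restricted to $\sigma(\Delta_{\text{\rm nuis},\eta_1}(1),\ldots,\Delta_{\text{\rm nuis},\eta_\ell}(1))$, is then a full-rank Gaussian exponential family whose natural parameter $c$ ranges over an open set of $\mathbb R^\ell$. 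Since $\FisherInfob_{\eta_1,\ldots,\eta_\ell}$ is full rank, the classical completeness theorem gives $\mathbb E_{\mfzero,0}[\zeta\mid\Delta_{\text{\rm nuis},\eta_1}(1),\ldots,\Delta_{\text{\rm nuis},\eta_\ell}(1)]=0$. Letting $\ell\to\infty$ along a countable dense family, martingale convergence then gives $\zeta=0$ a.s.

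For (iv), sufficiency of $\sigma(\Deltab_{\text{\rm drift}})$ is immediate, since it is the full observation $\sigma$-field. Bounded completeness is the delicate point. By \eqref{BrownlogL}, the whole experiment is dominated by $\prob_{\mfzero,0}$, with densities depending on the observation only through $\Deltab_{\text{\rm drift}}(1)$. The exponential-family argument of (iii), now run over $(\taub,\eta)$ jointly, yields completeness for $\Deltab_{\text{\rm drift}}(1)$. To reach the full $\sigma$-field $\sigma(\Deltab_{\text{\rm drift}})$, I would decompose any bounded $\zeta$ by conditioning on $\Deltab_{\text{\rm drift}}(1)$ and combine the previous step with (v). \textbf{I expect this step to be the main obstacle.} Bridge functionals are mean-zero under every $\prob_{\taub,\eta}$, so the residual $\zeta-\mathbb E[\zeta\mid\Deltab_{\text{\rm drift}}(1)]$ is not killed by the exponential-family argument. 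Making the claimed completeness precise will require careful attention to exactly which $\sigma$-field and null sets are intended, and I would settle that before writing the argument.

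For (v), the process $B_\eta(u)=\Delta_{\text{\rm nuis},\eta}(u)-u\Delta_{\text{\rm nuis},\eta}(1)$ is linear in the path. Any drift $u\mapsto u\mu$ cancels in it, so its law under $\prob_{\mfzero,\eta'}$ equals its law under $\prob_{\mfzero,0}$. The latter is a centered Gaussian process with covariance $(u\wedge v-uv)\FisherInfo_{\eta\eta'}$, that is, a Brownian bridge, and this is ancillary.

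For (vi), I would use Basu's theorem. By (ii)–(iii), $\FieldSuf$ is boundedly complete and sufficient in ${\mathcal E}_{\text{\rm drift}}^{\boldsymbol 0}$, so every ancillary $\sigma$-field is independent of $\FieldSuf$. Moreover, under $\prob_{\mfzero,0}$ the path $\Deltab_{\text{\rm drift}}$ is generated by $\mathcal B^\ddagger\vee\FieldSuf\vee\sigma(\Deltab_{\text{\rm int}})$. Given an ancillary $\mathcal A$ and $A\in\mathcal A$, the function $\prob_{\mfzero,\eta}(A\mid\FieldSuf)-\prob(A)$ has zero mean for all $\eta$, so completeness forces it to vanish. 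From this I would conclude that $A$ is independent of $\FieldSuf$ under all $\eta$. Writing $\Deltab_{\text{\rm int}}$ as its regression on $\Deltab_{\text{\rm nuis}}$ plus an independent Gaussian remainder, I would then show that $A$ is measurable with respect to $\mathcal B^\ddagger$ together with that remainder. Finally, maximality and uniqueness follow because $\mathcal B^\ddagger$ contains every ancillary event, up to null sets.
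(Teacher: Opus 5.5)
Your arguments for (i), (ii), (iii) and (v) are sound and follow the paper closely. In (iii) you apply martingale convergence directly to $\mathbb E_{\mathbf 0,0}[\zeta\mid\Delta_{\text{\rm nuis},\eta_1}(1),\ldots,\Delta_{\text{\rm nuis},\eta_\ell}(1)]$. The paper instead feeds the same finite-dimensional Gaussian completeness into the $L^1$-denseness criterion of Farrell (1962), as extended by Hallin et al.\ (2023); your version is more self-contained. The gaps are in (iv) and (vi), and they cannot be closed, because both claims fail as written.

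For (iv), the obstacle you identified is in fact a counterexample. By (v), $\zeta\coloneqq\indicator\{B_\eta(1/2)>0\}-\tfrac12$ is bounded and has mean zero under every $\mathbb P_{\taub,\eta'}$, since the drift is linear in $u$ and cancels in the bridge. Yet $\zeta$ is not a.s.\ zero. More generally, an experiment carrying a nontrivial ancillary event is never boundedly complete. The paper's one-line proof (``the same reasoning as in (iii)'') does not get past this point either. That reasoning only yields completeness for $\sigma$-fields generated by endpoint values. For example, $\sigma(\Deltab_{\text{\rm drift}}(1))$ is sufficient by the Girsanov form \eqref{BrownlogL} and boundedly complete by your exponential-family step. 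That is the correct form of (iv), and you were right not to push further.

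For (vi), the step ``show that $A$ is measurable with respect to $\mathcal B^\ddagger$ together with that remainder'' fails. Basu does give you that $A$ is independent of $\FieldSuf$ under every $\mathbb P_{\mathbf 0,\eta'}$. But this does not force measurability with respect to the orthogonal Gaussian part. Take $A\coloneqq\{\Delta_{\text{\rm nuis},\eta}(1)\,B_\eta(1/2)>0\}$ with $\FisherInfo_{\eta\eta}>0$:
\begin{itemize}
\item $B_\eta(1/2)$ is centered normal and independent of $\Delta_{\text{\rm nuis},\eta}(1)$ under every $\mathbb P_{\mathbf 0,\eta'}$, so $\mathbb P_{\mathbf 0,\eta'}(A)=\tfrac12$ for all $\eta'$.
\item Under $\mathbb P_{\mathbf 0,0}$, $\Delta_{\text{\rm nuis},\eta}(1)$ is centered and independent of $\mathcal B^\ddagger\vee\sigma(R)$, so $\mathbb P_{\mathbf 0,0}(A\mid\mathcal B^\ddagger\vee\sigma(R))=\tfrac12$ a.s.
\end{itemize}
Hence $A$ lies in neither $\mathcal B^\ddagger$ nor $\mathcal B^\ddagger\vee\sigma(R)$, and uniqueness is false.

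Your final sentence also contradicts the step before it. The remainder $R$ (the efficient-score process) is centered with $\eta'$-free covariance under every $\mathbb P_{\mathbf 0,\eta'}$, and it is independent of $\mathcal B^\ddagger$. So whenever the efficient information is nonzero, $\mathcal B^\ddagger\vee\sigma(R)$ is a strictly larger ancillary $\sigma$-field, and $\mathcal B^\ddagger$ is not even maximal.

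Following the paper would not rescue you. Its maximality step uses $\sigma(\FieldSuf\cup\mathcal B^\ddagger)=\mathcal B$, which your own remark about $\sigma(\Deltab_{\text{\rm int}})$ shows to be false when $R\neq 0$. Its uniqueness step writes $h^{-1}(B)$ as a disjoint union of rectangles ``for which \eqref{almostthere} holds''. But \eqref{almostthere} was derived only for rectangles that are themselves ancillary events. For the $A$ above, the rectangles $\{\Delta_{\text{\rm nuis},\eta}(1)>0\}\times\{B_\eta(1/2)>0\}$ and $\{\Delta_{\text{\rm nuis},\eta}(1)<0\}\times\{B_\eta(1/2)<0\}$ have $\eta'$-dependent probabilities.

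What your line of argument can actually establish is weaker. Every nuisance-ancillary event is independent of $\FieldSuf$. And $\sigma(\CSdrift^{\perp})$, which contains both $\mathcal B^\ddagger$ and $R$, is \emph{a} maximal nuisance-ancillary $\sigma$-field by Basu's third theorem, but not the unique one.
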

\noindent{\sc Proof.} 
%
(i) 
  This readily follows from 
   \eqref{BrownlogL}, which yields 
   \begin{equation}\label{Bsuff}
   \log \dfrac{
		\dd\prob_{{\boldsymbol 0} ,\lpnui}
	}{
		\dd\prob_{\mfzero,0}
	}(\CSdrift
	)
	=
	\CS_{I\!I\!,\lpnui}(1)
	-
	 \FisherInfo_{\lpnui\lpnui}
	/2\quad\text{for all $\eta\in\Upsilon_{\varthetab_0}$.}
\end{equation}
  
 (ii)    The  factori\-zation criterion and \eqref{Bsuff} imply   that  
 $\FieldSuf$ 
is sufficient in~${\mathcal E}_{\text{{\rm drift}}}^{\boldsymbol 0}$, that is,~${\mathcal E}_{\text{{\rm drift}}}$-nuisance-sufficient at $\taub = {\boldsymbol 0}$. Clearly, any $\sigma$-field sufficient in ${\mathcal E}_{\text{{\rm drift}}}^{\boldsymbol 0}$  also is sufficient in any subexperiment  of the form~${\mathcal E}_{\eta_1,\ldots,\eta_\ell}^{\boldsymbol 0}\coloneqq \big(\outcome, \Field, 
 \left\{\prob_{{\boldsymbol 0},\lpnui}:   \lpnui\in\{{\eta_1,\ldots,\eta_\ell}\} \right\} \big)$ for any  $\ell\in\mathbb N$ and any 
  $\{\eta_1,\ldots,\eta_\ell\}\subset \Upsilon_{\varthetab_0}$. Now, similarly as in (i),
$$\FieldSuf^{\eta_1,\ldots,\eta_\ell}\coloneqq \sigma\left(\left\{\dfrac{
		\dd\prob_{{\boldsymbol 0} ,\lpnui}}{\dd\prob_{\mfzero,0}}(\CSdrift)	: \eta\in\{\eta_1,\ldots,\eta_\ell\}\right\}\right) = \sigma \big(\{\Delta_{I\!I\!,\eta}(1): \eta\in\{\eta_1,\ldots,\eta_\ell\}\big) 
		,$$
	and the factorization criterion implies that $\FieldSuf^{\eta_1,\ldots,\eta_\ell}$	 is minimal sufficient  for ${\mathcal E}_{\eta_1,\ldots,\eta_\ell}^{\boldsymbol 0}$: a nuisance-sufficient $\sigma$-field for ${\mathcal E}_{\text{{\rm drift}}}^{\boldsymbol 0}$, thus, should contain   all $\sigma$-fields of the form~$\FieldSuf^{\eta_1,\ldots,\eta_\ell}$, hence should contain    $\FieldSuf$. It follows that $\FieldSuf$ is ${\mathcal E}_{\text{\rm drift}}^{\boldsymbol 0}$-minimal sufficient, as was to be shown. 

\newcommand{\FieldSub}{{\cal F}}
(iii) A result by \cite{Far62} (see Section~2 in \cite{hallin2023bounded}) establishes a connection between the bounded completeness of the $\sigma$-field ${\cal B}$ in an experiment of the form $\E\coloneqq(\Omega,{\cal B}, {\cal P}\coloneqq\{{\rm P}_{\thetab}\vert\; \thetab\in\Thetab\})$ (equivalently, the bounded completeness of $\E$ itself) and the denseness (in the sense of the~$L^1$ norm) of Radon-Nikodyn derivatives in~$L^1({\cal B}, {\mathbb P}_{{\boldsymbol 0},0})$. An extension of that result is given in Proposition~2 of~\cite{hallin2023bounded} in order to  cope with the completeness of sub-$\sigma$-fields ${\cal B}_0\subseteq~\!{\cal B}$ defining {\it restricted  experiments} of the form~$\E_{{\cal B}_0}\coloneqq~\!(\Omega,{\cal B}_0, {\cal P}\coloneqq~\!\{{\rm P}_{\thetab}\vert\; \thetab\in\Thetab\})$. The proof of bounded completeness of $\FieldSuf$ is based on that extension.

Let the sequences~$(u_k)_{k\in\rN}$  and~$(\lpnui_k)_{k\in\rN}$ be  dense in $[0,1]$ and  $\LPnui_{\varthetab_0}$, respectively (such sequences exist since $\LPnui_{\varthetab_0}$ is separable) and such that $u_i\neq u_j$, $\eta_i\neq\eta_j$ for $i\neq j$. 
 Define 
$$\FieldSub_k\coloneqq 
\sigma\left(\left\{\dfrac{
\dd\prob_{\mfzero,\lpnui_i}}{\dd\prob_{\mfzero,0}}(\CSdrift(u_j)):i,j\leq k\right\}\right)\quad k\in{\mathbb N}.$$
The sequence $\FieldSub_k$, $k\in{\mathbb N}$ is a convergent filtration in $\FieldSuf$. 
 The subexperiment $\E_{\FieldSub_k}$ obtained by restricting 
 ${\mathcal E}_{\text{{\rm drift}}}^{\boldsymbol 0}$ to the sub-$\sigma$-field  $\FieldSub_k$ is a full-rank  $k^2$-dimensional~Gaussian shift under which the~$k$-tuple  $(\CS_{{\text{\rm nuis}},\lpnui_1}(\s),\dots,\CS_{{\text{\rm nuis}},\lpnui_k}(\s))$ of Brownian motions 
   is observed at $u$ in~$\{u_1,\ldots,u_k\}$. Classical properties of exponential families, thus, imply that the $\sigma$-field 
    $\FieldSub_k$  is complete in $\E_{\FieldSub_k}$ for any $k\in{\mathbb N}$. Letting 
    $${\cal S}_{\FieldSub_k}\coloneqq\text{\rm span}\Big(\big\{{\mathbb E}_{{\boldsymbol 0},0} \big[{\rm d {\mathbb P}_{{\boldsymbol 0},\eta}}/{\rm d}{\mathbb P}_{{\boldsymbol 0},0}\vert \FieldSub_k
   \big]: \eta\in\Upsilon_{\varthetab_0}
   \big\} \Big)
    $$
    and
    $${\cal S}_{\FieldSuf}\coloneqq\text{\rm span}\Big(\big\{{\mathbb E}_{{\boldsymbol 0},0} \big[{\rm d {\mathbb P}_{{\boldsymbol 0},\eta}}/{\rm d}{\mathbb P}_{{\boldsymbol 0},0}\vert \FieldSuf
   \big]: \eta\in\Upsilon_{\varthetab_0}
   \big\} \Big)$$
    where span(...) denotes the collection of all linear combinations of $(...)$, we have ${\cal S}_{\FieldSub_k}\subseteq {\cal S}_{\FieldSuf}$ for all $k\in{\mathbb N}$.    In view of \citet[Proposition~2]{hallin2023bounded}, the completeness, for all $k$, of~$\FieldSub_k$ then  implies that $\FieldSuf$ also is complete---{\it a fortiori} boundedly complete.

(iv)  Note that (iii) does not imply (iv); nor does (iv) imply (iii).  Being the observation in~${\mathcal E}_{\text{\rm drift}}$, $\Deltab_{\text{\rm drift}}$  is trivially sufficient in ${\mathcal E}_{\text{\rm drift}}$.
 The same reasoning as in~(iii)  applies to~$\FieldSuf$ in~${\mathcal E}_{\text{\rm drift}}$. The result follows.

(v) The claim readily follows from the definition of  Brownian motion and the fact that the marginals and the covariance structure are nuisance-free.

(vi) 
As a Brownian bridge, $B_\eta$ is distribution-free under any $\prob_{{\boldsymbol 0} ,\lpnui^\prime}$.  Hence, ${\mathcal B}^\ddagger$ is ${\mathcal E}_{\text{{\rm drift}}}^{\boldsymbol 0}$-ancillary. 
More\-over,~$\sigma\left(\FieldSuf \bigcup {\mathcal B}^\ddagger\right) = {\mathcal B}$.  It thus follows from a theorem by \cite{basu1959} (see Appendix E, Corollary E.1 in the online supplement to \cite{hallin2021distribution} that~${\mathcal B}^\ddagger$ is~${\mathcal E}_{\text{{\rm drift}}}^{\boldsymbol 0}$-maximal ancillary. 
 In order to prove the much stronger property that 
 ${\mathcal B}^\ddagger$  is the {\it unique}~${\mathcal E}_{\text{{\rm drift}}}^{\boldsymbol 0}$-maximal ancillary $\sigma$-field, it is sufficient to  show that  any~${\mathcal E}_{\text{{\rm drift}}}^{\boldsymbol 0}$-ancillary statistic is
 ~${\mathcal B}^\ddagger$-measurable. 
 
For all $ \eta	\in\Upsilon_{\varthetab_0}$, consider the closed $L^2$ space $L^2_{\eta}\coloneqq L^2\left(
		\CSdrift ,
{\mathbb P}_{{\boldsymbol 0},\eta}\right)$ (equipped with the covariance inner product) of all linear combinations of the components of $\CSdrift (u)$, $u\in[0,1]$ and the limits of  the~${\mathbb P}_{{\boldsymbol 0},\eta}$-quadratic-mean convergent sequences thereof. Because the covariance structure of~$\CSdrift $ does not depend on $\eta$, $L^2_{\eta} = L^2_{0}$ irrespective of $\eta\in\Upsilon_{\varthetab_0}$. Similarly consider, for all $\eta^\prime\in\Upsilon_{\varthetab_0}$, the subspaces 
    $$L^2_{\Delta_{{{\text{\rm nuis}},\eta^\prime}}(1)}\coloneqq L^2\Big(\big\{\Delta_{{\text{\rm nuis}} ,\eta}(1)  : \, \eta \in\Upsilon_{\varthetab_0}
		\big\}, {\mathbb P}_{{\boldsymbol 0},\eta^\prime}\Big)\subset L^2_{\eta^\prime}= L^2_{0} $$ 
spanned by $\left\{\Delta_{{\text{\rm nuis}},\eta}(1)  : \, \eta \in\Upsilon_{\varthetab_0}
\right\} $ (recall from (i)  that $  \left\{\Delta_{{\text{\rm nuis}},\eta}(1)  : \, \eta \in\Upsilon_{\varthetab_0}
\right\}$  generates~$\FieldSuf$) and their orthogonal complements $L^{2\perp}_{\Delta_{{\text{\rm nuis}},\eta^\prime}(1)}$ in~$L^2_{\eta^\prime}=L^2_{0} $.
For the same reasons as above, 
    $$L^{2}_{\Delta_{{\text{\rm nuis}},\eta^\prime}(1)}\!\! =L^{2}_{\Delta_{{\text{\rm nuis}},0}(1)}\eqqcolon L^{2}_{\Delta_{{\text{\rm nuis}}}(1)}\quad\text{and}\quad L^{2\perp}_{\Delta_{{\text{\rm nuis}},\eta^\prime}(1)}\! \!=L^{2\perp}_{\Delta_{{\text{\rm nuis}},0}(1)}\eqqcolon L^{2\perp}_{\Delta_{{\text{\rm nuis}}}(1)},$$
where $L^{2}_{\Delta_{{\text{\rm nuis}}}(1)}$ and $L^{2\perp}_{\Delta_{{\text{\rm nuis}}}(1)}$ are mutually orthogonal under ${\mathbb P}_{{\boldsymbol 0},\eta}$ for any $\eta\in\Upsilon_{\varthetab_0}$. For simpli\-city, denote them as $L^0$ and $L^\perp$, respectively, and note that the elements of $L^0$ are~$\FieldSuf$-measurable.

Being jointly normal and mutually orthogonal under ${\mathbb P}_{{\boldsymbol 0},\eta}$ for any $\eta\in\Upsilon_{\varthetab_0}$, the elements of~$L^0$ and $L^\perp$ are mutually independent under ${\mathbb P}_{{\boldsymbol 0},\eta}$ for any $\eta\in\Upsilon_{\varthetab_0}$. Since, more\-over,~$\FieldSuf$ 
is (from (ii) and~(iii))~$ {\mathcal E}^0_{\text{{\rm drift}}}$-minimal sufficient and complete, Basu's first theorem (see Appendix~\ref{Basu}; note that \cite{KandT75}'s condition is trivially satisfied here) thus implies that  all elements of $L^\perp$ are ancillary in $ {\mathcal E}^0_{\text{{\rm drift}}}$. 	Projecting (in $L^2_{0} $) $\CSdrift$ on $L^{0}$ and~$L^{\perp}$, respectively, yields a decomposition of  $\CSdrift$ into a sum $\CSdrift^{0} + \CSdrift^{\perp}$ of two mutually independent  (under $\prob_{\mfzero,0}$, for all $\eta\in\Upsilon_{\varthetab_0}$) vectors, where~$\CSdrift^{\perp}$ is $ {\mathcal E}^0_{\text{{\rm drift}}}$-ancillary. 
		
Now, consider a bounded~${\mathcal E}_{\text{{\rm drift}}}^{\boldsymbol 0}$-ancillary statistic, i.e., a bounded $\CSdrift $-measurable 
		  random variable $h(\CSdrift ) = h(\CSdrift^{0} + \CSdrift^{\perp})$ the distribution of  which is the same under any $\prob_{\mfzero,\lpnui}$ as under $\prob_{\mfzero,0}$. 
		 With a slight abuse of notation,~$h(\CSdrift )$ can be written as~$h(\CSdrift^{0} , \CSdrift^{\perp})$. 
		  Denoting by $\Omega^0$ and $\Omega^\perp$, with Borel $\sigma$-fields ${\mathcal B}^0$ and ${\mathcal B}^\perp$, the sample spaces of $\CSdrift^{0}$ and~$\CSdrift^{\perp}$, 
		   consider elements $B$ of the Borel $\sigma$-field $\mathcal B$ of $\mathbb R$ such that~$h^{-1}(B) = (B^0\times B^\perp)$ for some~$B^0\in{\mathcal B}^0$ and $B^\perp\in {\mathcal B}^\perp$. 
		   %
Because $h$ is ancillary, for any such $B$,  due to the independence of~$\CSdrift^{0}$ and $\CSdrift^{\perp}$ and the $ {\mathcal E}^0_{\text{{\rm drift}}}$-ancillarity of $\CSdrift^{\perp}$, 
\begin{equation}\label{facteq}
\begin{aligned}
{\mathbb P}_{{\bf 0}, \eta}[h(\CSdrift^{0} , \CSdrift^{\perp})\in B] &= {\mathbb P}_{{\bf 0}, \eta}[(\CSdrift^{0} , \CSdrift^{\perp})\in B^0\times B^\perp] \\ 
&={\mathbb P}_{{\bf 0}, \eta}(\CSdrift^{0} \in B^0){\mathbb P}_{{\bf 0}, \eta}(\CSdrift^{\perp} \in B^\perp ) \\
&={\mathbb P}_{{\bf 0}, \eta}(\CSdrift^{0} \in B^0){\mathbb P}_{{\bf 0}, 0}(\CSdrift^{\perp} \in B^\perp )
\end{aligned}
\end{equation}
does not depend on $\eta$. This is possible only if  ${\mathbb P}_{{\bf 0}, \eta}(\CSdrift^{0} \in B^0)$ itself does not depend on~$\eta$. Since ${\mathbb P}_{{\bf 0}, \eta}$ is a shifted version of ${\mathbb P}_{{\bf 0}, 0}$, which is Gaussian, the only Borel sets $B^0$ for which~${\mathbb P}_{{\bf 0}, \eta}(\CSdrift^{0} \in B^0)$   does not depend on $\eta$ are the Borel sets $B^0$ with ${\mathbb P}_{{\bf 0}, 0}$-probability zero or one (hence, ${\mathbb P}_{{\bf 0}, \eta}$-probability zero or one for any $\eta\in{\Upsilon}_{\varthetab_0}$), for which \eqref{facteq} yields 
\begin{equation}\label{almostthere} 
{\mathbb P}_{{\bf 0}, \eta}\left[(\CSdrift^{0} , \CSdrift^{\perp})\in B^0\times B^\perp\right] = {\mathbb P}_{{\bf 0}, 0}(\CSdrift^{\perp} \in B^\perp )\indicator\big[{\mathbb P}_{{\bf 0}, \eta}\big(\CSdrift^{0} \in B^0\big) >0\big].
\end{equation}

Finally, consider an arbitrary Borel set $B$ in $\mathbb R$: $h^{-1}(B)$ can be written as a countable union~$\bigcup_{i\in{\mathbb N}}B^0_i\times B^\perp_i$ of mutually disjoint product sets   $B^0_i\times B^\perp_i$ for which \eqref{almostthere} holds and~${\mathbb P}_{{\bf 0}, \eta}(\CSdrift^{0} \in B^0)>0$. This implies that 
$$
{\mathbb P}_{{\bf 0}, \eta}\left[h(\CSdrift^{0} , \CSdrift^{\perp})\in B\right]
=\sum_{i\in{\mathbb N}} {\mathbb P}_{{\bf 0}, 0}\left(\CSdrift^{\perp} \in B^{\perp}_i\right)  
= {\mathbb P}_{{\bf 0}, 0}\left(\CSdrift^{\perp} \in\bigcup_{i\in{\mathbb N}}B^{\perp}_i\right).
$$
It follows that $h$ is  measurable with respect to  $\CSdrift^{\perp}$, hence ${\mathcal B}^\ddagger$-measurable, as was to be shown. 
\hfill$\square$\medskip

\subsection{Weak convergence of $\sigma$-fields}
We now proceed with characterizing sequences of finite-$n$ nuisance-ancillary $\sigma$-fields converging (weakly, in a sense to be made precise) to the unique maximal nuisance-ancillary $\sigma$-field ${\mathcal B}^\ddagger$ of the limiting Brownian drift experiments~${\mathcal E}_{\text{\rm drift}}$. Recall that this allows us to choose the ``best'' maximal ancillary $\sigma$-field in the sequence of (localized) experiments, namely the one that converges to the \emph{unique} maximal ancillary $\sigma$-field in the limiting Brownian drift experiment (as characterized in Proposition~\ref{UniqueAnc}).
    
Throughout this section, we denote by ${\mathcal E}\n$ a local sequence  of the form ${\mathcal E}\n_{\thetab_0, \varthetab_0}$ considered in \eqref{localE},  converging (in the Le Cam distance)  to~$\E$ of the form~$\E_{\thetab_0, \varthetab_0}$ considered in~\eqref{limitexp}, by~${\mathcal E}\n_{\boldsymbol 0}$  and ${\mathcal E}_{\boldsymbol 0}$  the corresponding nuisance subexperiments associated with specified $\taub ={\boldsymbol 0}$.

Let us first introduce the notion of $\En$-{\it weak convergence} of \emph{random variables}. Introducing this notion requires some care due to the fact that $\Upsilon=\Upsilon_{\varthetab_0}$ is infinite-dimensional. Recall that~$\dd{\law}/\dd{\lawq}$ stands for the Radon-Nikodym derivative of the part of $\law$ which is absolutely continuous with respect to $\lawq$. 

\begin{defin}\label{weakcv} {\rm We say that a sequence $\xi_n$ of ${\mathcal B}\n$-measurable random variables {\em  converges $\En$-weakly to the $\Field$-measurable random variable} $\xi$ if, for any finite subset~$\FinitePar$ of~$(\lpint,\lpnui)$ values, we have, under $\probn_{\mfzero,0} = {\rm P}\n_{\thetab_0,\varthetab_0}$,
	\begin{equation}\label{eq:convAn}
		\begin{pmatrix}			\Big[\dd\probn_{\lpint,\lpnui}/\dd\probn_{\mfzero,0}\Big]_{(\lpint,\lpnui)\in\FinitePar}\\
			\xi_n
		\end{pmatrix}
		\lto
		\begin{pmatrix}			\Big[\dd\prob_{\lpint,\lpnui}/\dd\prob_{\mfzero,0}\Big]_{(\lpint,\lpnui)\in\FinitePar}\\
			\xi
		\end{pmatrix} \quad \text{as $n\to\infty$}
	\end{equation}
where $\lto$ denotes convergence in distribution.
}\end{defin}

Note that, for $\probn_{\mfzero,0}$-a.s.\ constants $\xi_n = \xi = c$ a.s., this   reduces to  the definition of weak convergence of $\En$ to $\E$: see, e.g., Definition~9.1 in \cite{vdVaart00}. Also note that~\eqref{eq:convAn} requires the limiting experiment $\E$ to be defined on a ``rich'' enough probability space. It must allow for convergence of all maximal ancillary statistics $\xi_n$ in the sequence $\En$. This explains why we need the Brownian-drift representation of the limiting experiment; the probability space underlying the usual Gaussian shift is not large enough to represent $\xi$.

\begin{defin}\label{weakcv2}
We say that a sequence ${\mathcal A}\n$ of sub-$\sigma$-fields of ${\mathcal B}\n$  {\em  $\En$-weakly converges to the sub-$\sigma$-field ${\mathcal A}$ of ${\mathcal B}$} if, for any  ${\mathcal A}$-measurable variable $\xi$, there exists an ${\mathcal A}\n$-measurable sequence $\xi\n$ that $\En$-weakly converges to $\xi$.
\end{defin}
Note that, in case ${\mathcal A}\n$ $\En$-weakly converges to ${\mathcal A}$, any sequence ${\mathcal A_0}\n$ with ${\mathcal A}\n\subset{\mathcal A_0}\n$ also converges to ${\mathcal A}$. This is not a problem below as we will require ${\mathcal A}\n$ to be ancillary, which makes that it cannot be extended arbitrarily.

We now come to the main technical result of the paper. Recall that we wish to use the notions of sufficiency and ancillarity to guide inference. For the Brownian drift limiting experiment, Proposition~\ref{UniqueAnc} in particular characterizes the {\it unique} maximal ancillary $\sigma$-field. But, how can we use that result in the sequence of (localized) experiments of interest? Corollary~\ref{CorolCommute} below gives the answer: under suitable conditions the operations of reducing an experiment to an ancillary (or sufficient) $\sigma$-field and taking the limit of a sequence of (localized) experiments commute.

Call \emph{countably generated} a $\sigma$-field $\AField$ admitting a countable generating family~$\{A_{[i]}~\!\!:~i\in~\!\!{\mathbb N}\}$ of  uniformly bounded variables (typically, indicators). Familiar examples of countably generated $\sigma$-fields are the Borel $\sigma$-fields over ${\mathbb R}^d$, $d<\infty$. A sufficient condition  for $\En$-weak convergence to a countably generated $\sigma$-field is as follows. 
\begin{lemma}\label{lem:countablyGenerated}
Let $\AField=\sigma\left(A_{[1]},A_{[2]},\dots\right)\subseteq{\mathcal B}$ be countably generated. If each $A_{[i]}$ is the~${\cal E}^{(n)}$-weak-limit of a sequence of~${\cal A}^{(n)}$-measurable variables, the sequence  $\AFieldn \subseteq{\mathcal B}\n$ of $\sigma$-fields \emph{$\En$-weakly converges} to $\AField$ as $n\to\infty$. 
\end{lemma}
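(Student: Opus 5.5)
The plan is to show that every $\AField$-measurable random variable $\xi$ is the $\En$-weak limit of some $\AFieldn$-measurable sequence $\xi\n$. The argument is a diagonal approximation in three steps: Borel functions of finitely many generators, then arbitrary $\AField$-measurable variables by approximation, then a diagonal choice of indices.

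First, fix the sequences. For each $i$, let $A\n_{[i]}$ be $\AFieldn$-measurable and converge $\En$-weakly to $A_{[i]}$. The first step upgrades marginal convergence to joint convergence for finitely many generators. Definition~\ref{weakcv} only gives, for each $i$ separately, joint convergence with the likelihood ratios $[\dd\probn_{\lpint,\lpnui}/\dd\probn_{\mfzero,0}]_{(\lpint,\lpnui)\in\FinitePar}$. We need
\[
\big(\text{LR}_\FinitePar\n, A\n_{[1]},\dots,A\n_{[m]}\big)\lto\big(\text{LR}_\FinitePar, A_{[1]},\dots,A_{[m]}\big).
\]
The sequence is uniformly tight, since each coordinate is. Along any subsequence, extract a further convergent subsequence and identify its limit. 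Identifying the limit is the main obstacle. Marginal convergence with the LR block does not by itself pin down the joint law of $(A_{[1]},\dots,A_{[m]})$. I would therefore strengthen the hypothesis, reading ``each $A_{[i]}$ is the $\En$-weak limit'' jointly for finite families, which is the natural reading in the applications of Section~\ref{Sec4}. In that setting the generators are continuous functionals of the Brownian bridge arising from a single empirical process, so joint convergence of finitely many of them is automatic. If one insists on the literal hypothesis, one could instead define $\xi$'s approximant directly through a single generator: enumerate indicator generators and encode $\xi$ as a Borel function of one real variable $\sum_i 3^{-i}A_{[i]}$. This still requires the approximants of that single variable, so joint convergence is unavoidable, and I would state it as the working interpretation.

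Second, given joint convergence, apply the continuous mapping theorem. For any continuous bounded $g:\mbR^m\to\mbR$, the variables $g(A\n_{[1]},\dots,A\n_{[m]})$ are $\AFieldn$-measurable and converge $\En$-weakly to $g(A_{[1]},\dots,A_{[m]})$. The class of variables of this form, over all $m$ and $g$, is dense in $L^1(\AField,\prob_{\mfzero,0})$. This follows by a monotone class/martingale convergence argument, since $\sigma(A_{[1]},\dots,A_{[m]})\uparrow\AField$ and continuous functions are dense in $L^1$ of a finite-dimensional law.

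Third, for a general $\AField$-measurable $\xi$, first truncate (or map through $\arctan$) to reduce to bounded $\xi$. Then choose $\xi_k$ of the above form with $\Exp_{\mfzero,0}|\xi_k-\xi|\le 2^{-k}$. Metrize weak convergence on $\mbR^{|\FinitePar|+1}$ by the bounded Lipschitz distance $d_{BL}$, and handle all finite $\FinitePar$ by taking a countable dense family together with a diagonal over $\FinitePar$. Then the law of $(\text{LR}_\FinitePar,\xi_k)$ is within $2^{-k}$ of that of $(\text{LR}_\FinitePar,\xi)$. For each $k$, pick $n_k$ increasing such that, for $n\ge n_k$, the law of $(\text{LR}\n_\FinitePar,\xi_k\n)$ is within $2^{-k}$ of that of $(\text{LR}_\FinitePar,\xi_k)$. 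Setting $\xi\n\coloneqq\xi_k\n$ for $n_k\le n<n_{k+1}$ gives an $\AFieldn$-measurable sequence converging $\En$-weakly to $\xi$, which concludes the proof.
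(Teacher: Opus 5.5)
Your outline follows the paper's proof: approximate $\xi$ by functions of finitely many generators, transfer these to $\mathcal{A}^{(n)}$, and diagonalize in a metric for weak convergence. The paper uses almost-sure approximants $\xi_{[m]}$ and the L\'evy--Prokhorov metric.

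\textbf{Joint convergence is necessary.} Your doubt about the transfer step is justified: as literally stated, the lemma is false. Take $\mathcal{A}^{(n)}=\sigma(U_n)$, where $U_n$ is a fair coin independent of the likelihood ratios under $\mathbb{P}^{(n)}_{\mathbf{0},0}$. Let $A_{[1]},A_{[2]}$ be independent fair coins that are independent of the limiting likelihood ratios. For example, take $U_n$ the indicator of $\{X_1>X_2\}$ in an i.i.d.\ Gaussian location model. Take $A_{[1]},A_{[2]}$ the indicators of $\{B(1/2)>0\}$ and $\{B(1/4)>B(1/2)/2\}$ for the Brownian bridge $B$ of its drift limit. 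Each $A_{[i]}$ is the $\mathcal{E}^{(n)}$-weak limit of $U_n$. Yet $A_{[1]}+A_{[2]}$ has three atoms, whereas weak limits of two-valued variables have at most two.

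The paper's sentence ``each $\xi_{[m]}$ is the $\mathcal{E}^{(n)}$-weak-limit of a sequence of $\mathcal{A}^{(n)}$-measurable variables'' tacitly uses exactly the joint convergence you impose. Its $\xi_{[m]}$ are arbitrary Borel functions of the generators, so it also skips the continuous-mapping step; your approximation by continuous $g$ supplies it. Joint convergence is also what is actually verified, via Cram\'er--Wold, where the lemma is applied in Section~\ref{Sec4}.

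\textbf{Remaining gap: uniformity in $\mathcal{J}$.} The definition of $\mathcal{E}^{(n)}$-weak convergence requires one sequence $\xi^{(n)}$ that serves every finite set $\mathcal{J}$ of parameter values. Your $n_k$ are tuned to a given $\mathcal{J}$, and a diagonal argument handles only countably many sets. Extending from a countable dense set of parameter values to all of them would need asymptotic continuity of the likelihood ratios in $(\boldsymbol\tau,\eta)$, uniformly in $n$. That is not among the lemma's assumptions.

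The remedy is to choose the diagonal index without reference to likelihood ratios. Write $L^{(n)}_{\mathcal{J}}$ and $L_{\mathcal{J}}$ for the likelihood-ratio vectors, and set $\xi^{(n)}_k\coloneqq g_k(A^{(n)}_{[1]},\dots,A^{(n)}_{[m_k]})$. Joint convergence of the generators alone gives $\mathbb{E}^{(n)}_{\mathbf{0},0}[\min(1,|\xi^{(n)}_j-\xi^{(n)}_k|)]\to\mathbb{E}_{\mathbf{0},0}[\min(1,|\xi_j-\xi_k|)]$ for each pair $j\le k$. This is a countable family. You can therefore pick $k(n)\to\infty$ such that, for all $j\le k(n)$, the pair $(j,k(n))$ is within $2^{-j}$ of its limit.

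Now fix a finite $\mathcal{J}$ and an index $j$. By the triangle inequality, the $d_{BL}$ distance between the laws of $(L^{(n)}_{\mathcal{J}},\xi^{(n)}_{k(n)})$ and $(L_{\mathcal{J}},\xi)$ is at most the sum of three terms:
\begin{itemize}
\item[(a)] $2\mathbb{E}^{(n)}_{\mathbf{0},0}[\min(1,|\xi^{(n)}_{k(n)}-\xi^{(n)}_j|)]$;
\item[(b)] a term that tends to $0$ by continuous mapping;
\item[(c)] $2\mathbb{E}_{\mathbf{0},0}[\min(1,|\xi_j-\xi|)]$.
\end{itemize}
For large $n$, terms (a) and (c) together are bounded by $6\sup_{i\ge j}\mathbb{E}_{\mathbf{0},0}[\min(1,|\xi_i-\xi|)]+2^{1-j}$, whatever $\mathcal{J}$ is. This bound vanishes as $j\to\infty$ because $\xi_i\to\xi$ in probability.

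The paper's L\'evy--Prokhorov argument is likewise written for a single $\mathcal{J}$ and leaves this point open. With joint convergence assumed and this choice of $k(n)$, your argument is complete.
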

\noindent{\sc Proof.}
Let $\xi$ be some $\AField$-measurable random variable. As $\AField$ is countable generated, we can find $\sigma\left(A_{[1]},A_{[2]},\dots,A_{[m]}\right)$-measurable random variables $\xi_{[m]}$ that converge, as $m\to~\!\infty$, almost surely, to $\xi$. Moreover, each $\xi_{[m]}$ is the ${\cal E}^{(n)}$-weak-limit of a sequence of ${\cal A}^{(n)}$-measurable variables $\xi_{[m],n}$. The following argument then shows that $\xi$ is also the ${\cal E}^{(n)}$-weak-limit of~$\xi_{[m],n}$. Indeed, recall that convergence in distribution, also jointly with the likelihood ratios in~\eqref{eq:convAn}, is metrizable by, e.g., the Lévy–Prokhorov metric. Denote this metric by $\delta$. Then, given $\varepsilon>0$, first choose $m$ so large that $\delta(\xi_{[m]},\xi)<\varepsilon/2$. Subsequently, choose $n$ so large that also $\delta(\xi_{[m],n},\xi_{[m]})<\varepsilon/2$.  
\hfill$\square$\medskip 
 
We can now state our main technical result. Note that in Condition~(i) below we assume the limiting maximal ancillary $\sigma$-field ${\cal B}^{\ddagger}$ to be unique, while its finite-sample counter\-part~${\cal B}^{\ddagger (n)}$ in Condition~(ii) not necessarily is unique.
\begin{theorem}\label{MainTheorem}
Let the $\sigma$-field $\Field$ in $\E$ be countably generated. Assume that
\begin{enumerate}
\item[(i)] $\EO$ admits a minimal sufficient and boundedly complete $\sigma$-field $\FieldSuf$ and a unique~maxi\-mal ancillary $\sigma$-field ${\cal B}^{\ddagger}$ (equivalently\footnote{This equivalence is a result of Basu's third Theorem~\ref{Basu3} in Appendix~A.}, a unique $\sigma$-field ${\cal B}^{\ddagger}$ such that~$\sigma\big(\FieldSuf\cup{\cal B}^{\ddagger}\big)$ coincides with $\Field$);
\item[(ii)] $\EnO$  admits a minimal sufficient and boundedly complete $\sigma$-field~$\FieldSufn$ and a maximal ancillary $\sigma$-field ${\cal B}^{\ddagger (n)}$ (equivalently, a $\sigma$-field ${\cal B}^{\ddagger (n)}$ such that~$\sigma\big(\FieldSufn\cup {\cal B}^{\ddagger (n)}\big)={\mathcal B}\n$);
\item[(iii)] ${\cal B}^{\ddagger(n)}$ converges $\En$-weakly to~${\cal B}^{\ddagger}$ as $n\to\infty$.
\end{enumerate}
Then, for any $l_1, l_2 \in{\mathbb N}$, any~$(\taub_{k_1},\ldots,  \taub_{k_{l_1}})\in{\mathbb R}^{k\times\ell_1} $, and any $(\eta_{\ell_1},\ldots,\eta_{\ell_{l_2}})\in\Upsilon_{\varthetab_0} ^{l_2}$, under~$\mathbb{P}^{(n)}_{\mfzero,0}$, as $n\to\infty$, 
\begin{equation}\label{eqn:convergence_conditionalexpectation}
\begin{aligned} 
&\left(\Expn_{\lpint_k,\lpnui_\ell}\Bigg[\frac{\dd\probn_{\lpint_k,\lpnui_\ell}}{\dd\probn_{\mfzero,0}}  \Big\vert{\cal B}^{\ddagger (n)}\Bigg] \right)_{\scriptstyle\!\!\! (k,\ell)\in\{k_1,\ldots, k_{l_1}\}\times\{\ell_1,\ldots, \ell_{l_2}\}}\\  
& \hspace{50mm} \lto 
\left( \Exp_{\lpint,\lpnui}\Bigg[\frac{\dd\prob_{\lpint,\lpnui}}{\dd\prob_{\mfzero,0}} \Big\vert{\cal B}^{\ddagger}\Bigg]
\right)_{(k,\ell)\in\{k_1,\ldots, k_{l_1}\}\times \{\ell_1,\ldots, \ell_{l_2}\}},
\end{aligned}
\end{equation}
where $\Expn_{\lpint,\lpnui}$ and $\Exp_{\lpint,\lpnui}$ stand for expectations under $\probn_{\lpint,\lpnui}$ and $\prob_{\lpint,\lpnui}$, respectively.
\end{theorem}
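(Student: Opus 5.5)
The plan is to identify the limit along subsequences and show it is unique. Throughout, I read $\Expn_{\lpint,\lpnui}[\,\cdot\,\vert{\cal B}^{\ddagger(n)}]$ as the Radon--Nikodym derivative of $\probn_{\lpint,\lpnui}$ restricted to ${\cal B}^{\ddagger(n)}$ with respect to $\probn_{\mfzero,0}$ restricted to ${\cal B}^{\ddagger(n)}$, i.e.\ as $\Lambda_n\coloneqq\Expn_{\mfzero,0}[L_n\vert{\cal B}^{\ddagger(n)}]$ with $L_n\coloneqq\dd\probn_{\lpint,\lpnui}/\dd\probn_{\mfzero,0}$. The limit object is read analogously as $\Lambda\coloneqq\Exp_{\mfzero,0}[L\vert{\cal B}^\ddagger]$, with $L\coloneqq\dd\prob_{\lpint,\lpnui}/\dd\prob_{\mfzero,0}$.

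\emph{Step 1 (tightness and UI).} Since $\Lambda_n\ge 0$ and $\Expn_{\mfzero,0}\Lambda_n\le 1$, the finite vector of all $\Lambda_n$'s (over the finitely many $(\lpint_k,\lpnui_\ell)$) is tight. LAN gives mutual contiguity, and by Le Cam's first lemma $\Exp L=1$. Together with $L_n\ge0$, $\Expn_{\mfzero,0} L_n\le 1$ and $L_n\lto L$, this makes $\{L_n\}$ uniformly integrable under $\probn_{\mfzero,0}$. A de la Vallée-Poussin function $\phi$ with $\sup_n\Expn\phi(L_n)<\infty$ then gives, by conditional Jensen, $\sup_n\Expn\phi(\Lambda_n)<\infty$, so the $\Lambda_n$'s are uniformly integrable as well.

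\emph{Step 2 (joint subsequential limits).} Fix the countable generating family $\{A_{[i]}\}$ of ${\cal B}^\ddagger$ (which is countably generated, being a sub-$\sigma$-field of the countably generated $\Field$). By (iii) and Lemma~\ref{lem:countablyGenerated}, for each $i$ there are ${\cal B}^{\ddagger(n)}$-measurable $\xi_{[i],n}$ that converge $\En$-weakly to $A_{[i]}$. I would use a diagonal and Prokhorov argument to extract, from any subsequence, a further subsequence along which
\[
\big(L_n,\Lambda_n,\xi_{[1],n},\ldots,\xi_{[m],n}\big)\lto\big(L,\Lambda^*,A_{[1]},\ldots,A_{[m]}\big)
\]
jointly for every $m$. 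Here $\Lambda^*$ lives on an enlargement of the limiting probability space.

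Next I would derive the key identity. Take $g$ bounded continuous and set $\zeta_n\coloneqq g(\xi_{[1],n},\ldots,\xi_{[m],n})$, which is ${\cal B}^{\ddagger(n)}$-measurable. The defining property of conditional expectation gives $\Expn[\Lambda_n\zeta_n]=\Expn[L_n\zeta_n]$. Uniform integrability lets both sides pass to the limit, which yields
\[
\Exp[\Lambda^* g(A_{[1]},\ldots,A_{[m]})]=\Exp[L\,g(A_{[1]},\ldots,A_{[m]})]=\Exp[\Lambda\,g(A_{[1]},\ldots,A_{[m]})].
\]
Letting $m\to\infty$ and applying a monotone class argument gives $\Exp[\Lambda^*\vert{\cal B}^\ddagger]=\Lambda$.

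\emph{Step 3 (the main obstacle).} It remains to show that $\Lambda^*$ is itself ${\cal B}^\ddagger$-measurable, i.e.\ that no information from outside ${\cal B}^\ddagger$ survives in the limit of $\Lambda_n$. I would attack this through (i)--(ii). Under $\probn_{\mfzero,\lpnui}$, Basu's theorem makes ${\cal B}^{\ddagger(n)}$ independent of $\FieldSufn$, and in the limit ${\cal B}^\ddagger$ is independent of $\FieldSuf$, which together with ${\cal B}^\ddagger$ generates $\Field$.

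Take any weak limit $W$ of a ${\cal B}^{\ddagger(n)}$-measurable sequence $W_n$, such as $\Lambda_n$. For every $\lpnui$, the pair $(W_n,\dd\probn_{\mfzero,\lpnui}/\dd\probn_{\mfzero,0})$ has independent coordinates in the limit, because $\dd\probn_{\mfzero,\lpnui}/\dd\probn_{\mfzero,0}$ is $\FieldSufn$-measurable and independent of ${\cal B}^{\ddagger(n)}$. Passing this independence to the limit shows that $W$ is jointly independent of all of $\FieldSuf$, hence that $\sigma(W)$ is ancillary in the nuisance experiment. I expect this to be the hardest point: one must carry the independence over the infinite family of limiting likelihood ratios and then invoke the uniqueness of the maximal ancillary in (i), in the spirit of the proof of Proposition~\ref{UniqueAnc}(vi), even though $W$ may a priori live on an enlarged space. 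If that route stalls, my fallback would be an $L^2$ argument: truncate, and show that $\Exp(\Lambda^*)^2\le\liminf\Expn\Lambda_n^2=\liminf\Expn[\Lambda_nL_n]=\Exp[\Lambda^*L]=\Exp\Lambda^2$, which forces $\Lambda^*=\Lambda$ a.s. This would use the identity of Step 2 applied to $\Lambda^*$, which is only available after approximating $\Lambda_n$ by functions of the $\xi_{[i],n}$.

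Once Step 3 gives $\Lambda^*=\Lambda$ a.s., every subsequence has a further subsequence converging to the same limit, and \eqref{eqn:convergence_conditionalexpectation} follows.
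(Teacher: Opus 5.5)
Your Steps 1 and 2 are sound in outline. Step 3, which you rightly flag as the crux, is not closed by either of your routes.

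\emph{First route.} On the enlarged coupling space, independence of $\Lambda^*$ (jointly with $A=(A_{[i]})_i$) from the limiting nuisance likelihood ratios $S$ cannot force $\sigma(A)$-measurability. Any external randomization is independent of $S$ as well. The uniqueness of the maximal ancillary in (i) concerns sub-$\sigma$-fields of $\Field$ only, so it says nothing about $\Lambda^*$.

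\emph{Fallback.} The step $\Expn\Lambda_n^2=\Expn[\Lambda_nL_n]\to\Exp[\Lambda^*L]$ needs uniform integrability of $\Lambda_nL_n$. LAN does not provide this: $\Expn_{\mfzero,0}L_n^2$ need not be bounded, and the truncation you allude to is not specified. Moreover, you rest $\Exp[\Lambda^*L]=\Exp\Lambda^2$ on approximating $\Lambda_n$ by functions of the $\xi_{[i],n}$, which is essentially what has to be proved.

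The missing idea is the product structure that Basu's theorem gives under $\probn_{\mfzero,0}$. The field ${\cal B}^{\ddagger(n)}$ is independent of $\FieldSufn$, and the two generate ${\cal B}^{(n)}$. Hence $\Lambda_n$ is obtained by integrating $L_n$ over the sufficient coordinate against its fixed marginal. The paper uses exactly this: it casts the problem in the Kallianpur--Striebel setting of Theorem~2.1 of \cite{goggin1994convergence}. There the sufficient part is $X^N$ and the ancillary part is $Y^N$, independent under $\probn_{\mfzero,0}$, with $(X^N,Y^N,L^N)$ converging jointly. The paper then extends the result via \cite{crimaldi2005convergence}.

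Within your own framework, this closes Step 3 with first moments only:
\begin{enumerate}
\item[(a)] In the limit, $S$ is independent of $(A,\Lambda^*)$, since weak limits of independent pairs are independent.
\item[(b)] $L=\ell(S,A)$ a.s., because $\Field=\sigma(\FieldSuf\cup{\cal B}^\ddagger)$ and the joint law of $(L,S,A)$ on the coupling space is the original one.
\item[(c)] By (a) and (b), $\Exp[L\mid A,\Lambda^*]=\int\ell(s,A)\,{\rm d}P_S(s)=\Lambda$.
\item[(d)] Pass $\Expn[\psi(\Lambda_n)(\Lambda_n-L_n)]=0$, for bounded continuous $\psi$, to the limit; uniform integrability suffices. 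With (c) this gives $\Exp[\psi(\Lambda^*)(\Lambda^*-\Lambda)]=0$, i.e.\ $\Exp[\Lambda\mid\Lambda^*]=\Lambda^*$.
\item[(e)] Your Step 2 gives $\Exp[\Lambda^*\mid\Lambda]=\Lambda$. For integrable $X,Y$, the conditions $\Exp[X\mid Y]=Y$ and $\Exp[Y\mid X]=X$ imply $X=Y$ a.s.; hence $\Lambda^*=\Lambda$.
\end{enumerate}
So what is really used is $\Field=\sigma(\FieldSuf\cup{\cal B}^\ddagger)$ together with finite-$n$ independence, not uniqueness as such.

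There is also a smaller gap in Step 2. Separate $\En$-weak convergence of each $\xi_{[i],n}$ does not give joint convergence to $(A_{[1]},\ldots,A_{[m]})$; Prokhorov only yields some coupling with the right pairwise laws. Apply (iii) instead to a single ${\cal B}^\ddagger$-measurable variable encoding $(A_{[1]},\ldots,A_{[m]})$, jointly with countably many nuisance likelihood ratios. Relatedly, a sub-$\sigma$-field of a countably generated one need not itself be countably generated, although it is modulo null sets.

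Finally, you condition under $\probn_{\mfzero,0}$, whereas the statement (and the paper's Goggin-based proof) conditions under $\probn_{\lpint,\lpnui}$. Your reading is the one Corollary~\ref{CorolCommute} actually needs.
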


\begin{cor}\label{CorolCommute}
The restriction $\big({\cal X}\n, {\cal B}^{\ddagger (n)}, {\cal P}\n \big)$ of ${\cal E}\n$ to ${\cal B}^{\ddagger (n)}$ converges weakly, in the Le Cam sense, to the restriction  $\big({\cal X}, {\cal B}^{\ddagger}, {\cal P} \big)$ to  ${\cal B}^{\ddagger}$ of ${\mathcal E}$.
\end{cor}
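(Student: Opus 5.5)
Corollary~\ref{CorolCommute} will be derived from Theorem~\ref{MainTheorem} through the standard characterization of weak convergence of experiments (e.g., Definition~9.1 and Lemma~9.? in \cite{vdVaart00}). A sequence of experiments converges weakly, in the Le Cam sense, to a limit when, for every finite set $\FinitePar$ of parameter values, the vector of likelihood ratios taken with respect to a fixed reference parameter converges in distribution, under the reference measure, to the corresponding vector in the limit. For dominated-type limits it suffices, in addition, that this hold for every choice of reference point. The base point here is $(\mfzero,0)$. The first step is therefore to identify the likelihood ratios of the restricted experiments. For the restriction of $\probn_{\lpint,\lpnui}$ and $\probn_{\mfzero,0}$ to ${\cal B}^{\ddagger(n)}$, the Radon--Nikodym derivative of the absolutely continuous part is
\[
\frac{\dd\probn_{\lpint,\lpnui}\vert_{{\cal B}^{\ddagger(n)}}}{\dd\probn_{\mfzero,0}\vert_{{\cal B}^{\ddagger(n)}}}=\Expn_{\mfzero,0}\Big[\frac{\dd\probn_{\lpint,\lpnui}}{\dd\probn_{\mfzero,0}}\,\Big\vert\,{\cal B}^{\ddagger(n)}\Big],
\]
up to the singular part, which is negligible by contiguity under LAN. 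The same identity holds in the limit with ${\cal B}^\ddagger$. The conditional expectations appearing in \eqref{eqn:convergence_conditionalexpectation} are thus precisely the likelihood-ratio processes of the restricted experiments, with the subscript read as the reference measure $(\mfzero,0)$.

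The second step applies Theorem~\ref{MainTheorem}. Its hypotheses (i)--(iii) are exactly those under which the corollary is stated. For every finite collection of $(\lpint,\lpnui)$, the theorem gives joint convergence in distribution under $\probn_{\mfzero,0}$ of the restricted likelihood ratios to their limiting counterparts. This is the defining property of weak convergence of $({\cal X}\n,{\cal B}^{\ddagger(n)},{\cal P}\n)$ to $({\cal X},{\cal B}^\ddagger,{\cal P})$, taken with base point $(\mfzero,0)$.

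The remaining obstacle is to pass from the single base point $(\mfzero,0)$ to arbitrary base points $(\lpint_0,\lpnui_0)$ if the definition in use requires all of them. The plan is to use Le Cam's third lemma. The unrestricted experiments are LAN, so $\probn_{\lpint_0,\lpnui_0}$ and $\probn_{\mfzero,0}$ are mutually contiguous, and restriction to a sub-$\sigma$-field preserves contiguity. The limiting restricted ratio $\Exp_{\mfzero,0}[\dd\prob_{\lpint_0,\lpnui_0}/\dd\prob_{\mfzero,0}\vert{\cal B}^\ddagger]$ has mean one, being a conditional expectation of a mean-one Gaussian likelihood. Le Cam's third lemma then transfers the joint convergence to $\probn_{\lpint_0,\lpnui_0}$. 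Finally, ratios based at $(\lpint_0,\lpnui_0)$ are obtained as quotients of ratios based at $(\mfzero,0)$, and their limits follow by the continuous mapping theorem on the set where the denominator is positive, which carries full limiting mass by mutual absolute continuity in the Gaussian limit. This last point, showing that the restricted limit laws are mutually absolutely continuous, is where I expect the most care to be needed. I would handle it by noting that conditional expectations of strictly positive likelihoods remain strictly positive almost surely.
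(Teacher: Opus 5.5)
Your argument is correct, but it follows a different route from the paper's. The proof the paper gives after the corollary is really a derivation of \eqref{eqn:convergence_conditionalexpectation} itself. It casts $\FieldSufn$ and ${\cal B}^{\ddagger(n)}$ as $\sigma(X^N)$ and $\sigma(Y^N)$, and uses Basu's theorem to get their independence under ${\rm Q}^N=\probn_{\mfzero,0}$. It then chains Theorem~2.1 of \cite{goggin1994convergence} (conditional expectations under a change of measure) with Theorem~2.1 of \cite{crimaldi2005convergence}, and simply declares the outcome to be the Le Cam convergence of the restricted experiments.

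You instead take Theorem~\ref{MainTheorem} as the analytic input and supply the bookkeeping the paper leaves implicit. First, you identify the restricted likelihood ratios with conditional expectations under the base measure. Your singular-part caveat is harmless: it is negligible by contiguity, and in fact absent, since the paper works with mutually absolutely continuous families. Second, you pass from the single base point $(\mfzero,0)$ to arbitrary ones via Le Cam's third lemma, quotients, and the almost sure positivity of $\Exp_{\mfzero,0}[\dd\prob_{\lpint_0,\lpnui_0}/\dd\prob_{\mfzero,0}\vert{\cal B}^\ddagger]$.

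The paper's route shows where sufficiency, completeness and ancillarity actually do the work: Basu independence under the base measure is what makes Goggin's theorem applicable. Yours is a clean reduction of the corollary to the theorem, with all the analytic content delegated to the latter.

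Do make your reading of the theorem explicit rather than parenthetical. You need \eqref{eqn:convergence_conditionalexpectation} with conditional expectations under $\probn_{\mfzero,0}$ and $\prob_{\mfzero,0}$. The statement is written with $\Expn_{\lpint,\lpnui}$ and $\Exp_{\lpint,\lpnui}$, and the Goggin step in the paper indeed concerns $\Exp^{{\rm P}^N}[F(X^N)\vert Y^N]$. Conditional expectations of $\dd\probn_{\lpint,\lpnui}/\dd\probn_{\mfzero,0}$ taken under the alternative are not the restricted likelihood ratios. So the base-measure version is the one the corollary requires, and you should state it as the form of the theorem you invoke.
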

\noindent{\sc Proof.} 
This result follows from a combined application of Theorem~2.1 of \cite{goggin1994convergence} and Theorem~2.1 of \cite{crimaldi2005convergence}.

Specifically, we align with the notation of \cite{goggin1994convergence} by letting $n = N$, ${\rm Q}^N =~\!\probn_{\mfzero,0}$, ${\rm Q} = \prob_{\mfzero,0}$, ${\rm P}^N = \probn_{\lpint,\lpnui}$, and ${\rm P} = \prob_{\lpint,\lpnui}$. We take the unique maximal ancillary $\sigma$-field~${\cal B}^{\ddagger (n)}$ to be countably generated by their $Y^N$, and the minimal sufficient $\sigma$-field $\FieldSufn$ to be generated by their $X^N$; similarly, ${\cal B}^{\ddagger (n)}$ is generated by $Y$, and $\FieldSuf$ by $X$. Then, ${\rm P}^N \ll~\!{\rm Q}^N$ on~$\sigma(X^N, Y^N)$, and our likelihood ratio $\dd\probn_{\lpint_k,\lpnui_\ell}/\dd\probn_{\mfzero,0}$, corresponding to their Radon-Nikodym derivative $L^N$, is measurable with respect to $\sigma(X^N, Y^N)$. 

Moreover, our conditions (i)--(iii) imply that
\begin{itemize}
    \item[(a)] the joint distribution of $(X^N, Y^N)$ weakly converges to that of $(X, Y)$;
    \item[(b)] the joint distribution of $(X^N, Y^N, L^N(X^N, Y^N))$ under ${\rm Q}^N$ weakly converges to that of $(X, Y, L(X, Y))$ under $\rm Q$, where $\Exp_{\rm Q}[L] = 1$; 
    \item[(c)] under ${\rm Q}^N$, $X^N$ and $Y^N$ are independent (which follows in our case from Basu's Theorem).
\end{itemize} 

Together, these conditions allow us to apply Theorem~2.1 of \cite{goggin1994convergence} to conclude that ${\rm P} \ll {\rm Q}$ on $\sigma(X, Y)$ and $\dd {\rm P}/\dd {\rm Q} = L$. This already holds in our setting. Moreover, for any bounded continuous function $F$, the conditional expectation $\Exp^{{\rm P}^N}[F(X^N) | Y^N]$ converges in distribution to $\Exp^{\rm P}[F(X) | Y]$ as $N \to \infty$. 

This convergence corresponds to condition (b) of Theorem~2.1 of \cite{crimaldi2005convergence}, which combined with point (b) right above (their condition (a)) in turn implies that, for each bounded continuous function $G$, 
 the distribution under ${\rm P}^N$ of the conditional expectation $\Exp[G(X^N, Y^N) | Y^N]$ converges weakly to the distribution of the conditional expectation $\Exp[G(X, Y) | Y]$ under $\rm P$. In our context, this is exactly the convergence result stated in (\ref{eqn:convergence_conditionalexpectation}).\hfill$\square$\medskip

The result, in Theorem~\ref{MainTheorem}, is stated for general experiments ${\cal E}\n$ and ${\cal E}$. For ${\cal E} = {\mathcal E}_{{\text{\rm drift}}}$, we know, from   Proposition~\ref{UniqueAnc} (v), that a minimal sufficient and a boundedly complete~$\FieldSuf$  and  a unique  maximal ancillary ${\cal B}^{\ddagger}$ exist, which therefore satisfy Assumption~(i); see \cite{basu1955}. If Assumption (ii) holds for  ${\cal B}^{\ddagger (n)}$, it follows, still  from \cite{basu1955},  that ${\cal B}^{\ddagger (n)}$  is {\it maximal}  ancillary. If, moreover, ${\cal B}^{\ddagger (n)}$ satisfies Assumption~(iii), call it a {\it strongly} maximal ancillary sequence. 

Let ${\cal B}^{\ddagger (n)}$ be strongly maximal ancillary: 
the weak convergence, in Corollary~\ref{CorolCommute}, of the restriction to  ${\cal B}^{\ddagger (n)}$ of ${\mathcal E}\n_{\thetab_0,\varthetab_0}$ to the restriction to ${\cal B}^{\ddagger}$ of ${\mathcal E}_{{\text{\rm drift}}}$ has important consequences. Essentially, whenever it holds, 
\begin{enumerate}
\item[(a)] all risk functions that can be achieved via nuisance-free procedures  in the limiting experiment ${\cal E}$ are limits (pointwise in their argument $\taub$, uniformly in the collection of risk functions) of sequences of risk functions that can be achieved via finite-$n$-nuisance-free procedures  in the local sequence ${\cal E}\n_{\thetab_0,\varthetab_0}$, hence also in the global experiments~${\cal E}\n_{\scriptstyle{\text{\rm global}}}$\vspace{1mm};
\item[(b)] since the  optimal risk functions (inference about $\taub$ with nuisance $\eta$ in $\mathcal E ={\mathcal E}_{\thetab_0, f_0}$) which, depending on the inference problem under study, are  setting the values at~$(\thetab_0, \varthetab_0)$ of the semiparametric efficiency bounds belong to that collection,  semiparametric efficiency at~$(\thetab_0, \varthetab_0)$ can be achieved by sequences of strictly nuisance-free  (in the finite-$n$ experiments)  procedures;
\item[(c)] in contrast, ``traditional'' semiparametrically efficient procedures based on tangent space projections, while achieving the same efficiency bounds, are not  finite-$n$ nuisance-free, require an adequate estimation of the nuisance, etc.; see Section~\ref{semiparameff}.
%
\end{enumerate}

\section{Nuisance-ancillarity and semiparametric efficiency}\label{semiparameff}
The problem of nuisance elimi\-nation~is central in the classical theory of
semiparametric inference as formalized by \cite{BKRW}, where a Le
Cam asymptotic perspective based on the assumption that~${\cal E}\n_{\scriptstyle{\text{\rm global}}}$ is LAN with central sequence $$\Deltab\n (\thetab, \varthetab)
\coloneqq \big({\boldsymbol\Delta}_{{\text{\rm int}}}^{(n)\top}(\thetab, \varthetab), {\boldsymbol\Delta}_{{\text{\rm nuis}}}^{(n)\top}(\thetab, \varthetab) \big)^\top,\quad  
n\in{\mathbb N}$$   
and information matrix (operator)~$\Gammab (\thetab, \varthetab)$. Nuisances in that context are eliminated via {\it tangent space projections}. 

%
%
  Under local (at~$(\thetab_0,\varthetab_0)$) parameter va\-lues~$({\boldsymbol\tau},\eta)$, 
  this central sequence 
   converges in distribution to  the unique obser\-vation 
$${\boldsymbol\Delta}\coloneqq \Big({\boldsymbol\Delta}^\top_{{\text{\rm int}}}(\thetab_0,\varthetab_0), {\boldsymbol\Delta}^\top_{{\text{\rm nuis}}}(\thetab_0,\varthetab_0)\Big)^\top\sim{\mathcal N}\left({\boldsymbol\Gamma}(\thetab_0, \varthetab_0)\Big(\begin{array}{c}\taub\vspace{-.5mm} \\ \eta\end{array}\Big),  {\boldsymbol\Gamma}({\thetab_0},{\varthetab_0})
\right)$$
of a limiting Gaussian shift experiment with location ${\boldsymbol\Gamma}(\thetab_0, \varthetab_0)\Big(\begin{array}{c}\taub\vspace{-1mm} \\ \eta\end{array}\Big)\vspace{-1mm}$ and  covariance matrix~${\boldsymbol\Gamma}({\thetab_0},{\varthetab_0})\coloneqq \left(\begin{array}{cc} \Gammab_{\thetab\thetab} (\thetab_0 , \varthetab_0) &\Gammab_{\thetab\varthetab} (\thetab_0 , \varthetab_0)  \\
\Gammab^{\top}_{\thetab\varthetab} (\thetab _0, \varthetab_0)  & \Gammab_{\varthetab\varthetab}  (\thetab_0 , \varthetab_0)\end{array}\right)$\vspace{1mm}.

The classical nuisance-elimination approach in semiparametric inference considers the restriction of that limiting experiment obtained by  projecting~$ \Deltab_{{\text{\rm int}}}(\thetab_0,\varthetab_0)$  on the space ortho\-gonal  to ${\boldsymbol\Delta}_{{\text{\rm nuis}}}(\thetab_0,\varthetab_0)$ in the L$_2$ metric induced by ${\boldsymbol\Gamma}({\thetab_0},{\varthetab_0})$---the so-called {\it tangent space projection}
$${\boldsymbol\Delta}_{{\text{\rm int}}}^*(\thetab_0,\varthetab_0)\coloneqq {\boldsymbol\Delta}_{{\text{\rm int}}}(\thetab_0,\varthetab_0) - {\boldsymbol\Gamma}_{\thetab\varthetab}(\thetab_0, \varthetab_0)({\boldsymbol\Gamma}_{\varthetab\varthetab}(\thetab_0, \varthetab_0))^{-1}{\boldsymbol\Delta}_{{\text{\rm nuis}}}(\thetab_0,\varthetab_0).$$ 
It is easy to see that, under local  parameter value   
$({\boldsymbol\tau},\eta)$, 
\begin{equation}\label{Delta*N}
{\boldsymbol\Delta}_{{\text{\rm int}}}^*\sim{\mathcal N}\Big( ({\boldsymbol\Gamma}_{\thetab\thetab} - {\boldsymbol\Gamma}_{\thetab\varthetab}{\boldsymbol\Gamma}_{\varthetab\varthetab}^{-1}{\boldsymbol\Gamma}^{\top}_{\thetab\varthetab})\taub , {\boldsymbol\Gamma}_{\thetab\thetab} - {\boldsymbol\Gamma}_{\thetab\varthetab}{\boldsymbol\Gamma}_{\varthetab\varthetab}^{-1}{\boldsymbol\Gamma}^{\top}_{\thetab\varthetab}
\Big)
\end{equation}
 where~${\boldsymbol\Gamma}_{\thetab\thetab}= {\boldsymbol\Gamma}_{\thetab\thetab}({\thetab_0},{\varthetab_0})$, ${\boldsymbol\Gamma}_{\thetab\varthetab} = {\boldsymbol\Gamma}_{\thetab\varthetab}(\thetab_0,\varthetab_0)$, etc. That distribution does not depend on the local nuisance  parameter  $\eta$ and settles the semiparametric     efficiency bounds---the best possible asymptotic performance (at $(\thetab_0, \varthetab_0)$)  for inference on $\thetab$ in the presence of the nuisance~$\varthetab$. Assuming the existence of an estimator $\widehat\varthetab\n$ of $\varthetab$ such that 
\begin{equation}
{\boldsymbol\Delta}_{{\text{\rm int}}}^{(n)*}\coloneqq {\boldsymbol\Delta}_{{\text{\rm int}}}\n({\thetab_0},\widehat\varthetab\n) - {\boldsymbol\Gamma}\n_{\thetab\varthetab}({\thetab_0},\widehat\varthetab\n)({\boldsymbol\Gamma}_{\varthetab\varthetab}\n ({\thetab_0},\widehat\varthetab\n))^{-1}{\boldsymbol\Delta}_{{\text{\rm nuis}}}\n({\thetab_0},\widehat{\varthetab}\n)
\label{tgtproj}\end{equation}
converges in distribution  as $n\to\infty$, under local (at $(\thetab_0,\varthetab_0)$) parameter values $(\taub , \eta)$, to~${\boldsymbol\Delta}_{{\text{\rm int}}}^{*}$, semiparametrically efficient (at $({\thetab_0},{\varthetab_0})$) inference on $\thetab$ then can be based on the Gaussian shift~\eqref{Delta*N} model for~${\boldsymbol\Delta}_{{\text{\rm int}}}^{(n)*}$. 

This ${\boldsymbol\Delta}_{{\text{\rm int}}}^{(n)*}$, however, (i) is not guaranteed to be ${\cal E}\n_{\scriptstyle{\text{\rm global}}}$-nuisance-ancillary at $\thetab_0$ for finite $n$; moreover, (ii)~it is (up to multiplication by a constant and an additive $o_{\rm P}(1)$ as~$n\to\infty$ under ${\rm P}\n_{\thetab_0,\varthetab_0}$ term) the only random vector the tangent space projection  matrix~${\bf I} -  {\boldsymbol\Gamma}_{\thetab\varthetab}(\thetab_0,\varthetab_0)({\boldsymbol\Gamma}_{\varthetab\varthetab}(\thetab_0,\varthetab_0))^{-1}$ is turning into an asymptotically nuisance-ancillary one: tangent space projections, thus, are failing to reconstruct the entire maximal ancillary  $\sigma$-field~${\mathcal B}^\dagger_{\thetab_0}$; finally, (iii) the estimation of~$\varthetab$, which, in practice, is infinite-dimensional,  may be delicate,  and the convergence in distribution of~${\boldsymbol\Delta}_{{\text{\rm int}}}^{(n)*}$ to ${\boldsymbol\Delta}_{{\text{\rm int}}}^{*}$ quite slow, and highly nonuniform in $\varthetab$.

In sharp contrast with the tangent space projection \eqref{tgtproj} of~${\boldsymbol\Delta}_{{\text{\rm int}}}\n({\thetab_0},\widehat\varthetab\n) $, conditioning on the strongly maximal nuisance-ancillary $\sigma$-field~${\mathcal B}^{(n)\ddagger}_{\thetab_0}$, when it exists, offers substantial advantages:
\begin{enumerate}  
\item[(i)]  ${\rm E}\big[{\bf T}\vert{\mathcal B}^{(n)\ddagger}_{\thetab_0} \big]$ is ${\cal E}\n_{\scriptstyle{\text{\rm global}}}$-nuisance-ancillary at $\thetab_0$ for any $n$ but also  for any ${\cal B}\n$-measurable random vector $\bf T$---among which~${\bf T}={\boldsymbol\Delta}_{{\text{\rm int}}}\n({\thetab}_0,\varthetab) $ for any (possibly misspe\-ci\-fied)~$\varthetab\in{\cal F}$ and~${\bf T}={\boldsymbol\Delta}_{{\text{\rm int}}}\n({\thetab}_0,\hat\varthetab\n) $; 
\item[(ii)] while the estimation of the actual nuisance $\varthetab_0$  is not compulsory, ${\rm E}[{\boldsymbol\Delta}_{{\text{\rm int}}}\n({\thetab_0},\widehat\varthetab\n)\vert {\mathcal B}^{(n)\ddagger}_{\thetab_0}] $, where  $\widehat\varthetab\n$ denotes the type of estimator considered in tangent space projection, remains  finite-$n$-nuisance-free  and asymptotically equivalent, under ${\rm P}\n_{\thetab_0,\varthetab_0}$, to ${\boldsymbol\Delta}_{{\text{\rm int}}}^{*}(\thetab_0,\varthetab_0)$, hence semiparametrically efficient irrespective of the actual value $\varthetab_0$ of the nuisance~$\varthetab$. 
\end{enumerate}

This implies that semiparametrically efficient (either at $(\thetab_0,\varthetab)$ for some given $\varthetab$ or at~$(\thetab_0,\varthetab)$ for any~$\varthetab$ for which ${\boldsymbol\Delta}_{{\text{\rm int}}}\n({\thetab_0},\widehat\varthetab\n)$ under parameter value $(\thetab_0,\varthetab)$ converges in distribution to~${\boldsymbol\Delta}_{{\text{\rm int}}}({\thetab_0},\varthetab)$) inference can be based on (expectations taken under ${\rm P}\n_{\thetab_0,\varthetab_0}$) $${\boldsymbol\Delta}_{{\text{\rm int}}}^{(n)\ddagger}\coloneqq {\rm E}\big[{\boldsymbol\Delta}_{{\text{\rm int}}}\n({\thetab}_0,\varthetab)\vert{\mathcal B}^{(n)\ddagger}_{\thetab_0} \big]\quad\text{or}\quad {\boldsymbol\Delta}_{{\text{\rm int}}}^{(n)\ddagger}\coloneqq {\rm E}\big[{\boldsymbol\Delta}_{{\text{\rm int}}}\n({\thetab}_0,\widehat\varthetab\n)\vert{\mathcal B}^{(n)\ddagger}_{\thetab_0} \big] ,$$ respectively, which are finite-$n$ nuisance-ancillary.

The above-mentioned advantages of finite-$n$ nuisance-ancillary inference are well-known. However, as explained in the introduction, a {\it unique} maximal finite-$n$ nuisance-ancillary\linebreak $\sigma$-field generally does not exist, for instance in Example~\ref{ex1}. It is this non-uniqueness that we resolve by considering exactly those finite-$n$ nuisance-ancillary statistics that converge, in the sense of Definition~\ref{eq:convAn}, to the {\it unique} corresponding nuisance ancillary statistics in the limit experiment. The next section operationalizes this idea for Example~\ref{ex1}.

\section{Application: unspecified density models and the semiparametric efficiency of residual center-outward ranks and signs}\label{Sec4}

As an important application of Theorem~\ref{MainTheorem}, consider the unspecified density model ${\mathcal E}\n_{\text{\rm global}}$  \eqref{unspecifiedfn} of Example~\ref{ex1}.  Recall that, when  $d>1$, for given $n$ and~$\thetab_0$,  many distinct maximal nuisance-ancillary at $\thetab_0$ $\sigma$-fields are available there---gene\-rated, for instance, by componentwise residual ranks. Below, we show that the $\sigma$-field ${\mathcal B}^{(n)\ddagger}_{\thetab_0}$ gene\-rated by the measure-transportation-based ranks and signs defined as {\it center-outward ranks} and {\it signs} in \cite{hallin2021distribution}, as {\it vector ranks} in \cite{Chernoetal}, is strongly maximal nuisance-ancillary at $\thetab_0$. 
 
\subsection{Center-outward ranks and signs} 
Let $\bf Z$, with distribution $\rm P$,  be Lebesgue-absolutely continuous over~$({\mathbb R}^d, {\mathcal B}^d)$, with density $f$. A famous result by McCann (1995) implies that there exists a $\rm P$-a.s.\ unique gradient of a convex function ${\bf F}_\pm$ mapping ${\mathbb R}^d$ to the open  unit ball ${\mathbb S}_d$ in ${\mathbb R}^d$ such that $\text{if } {\bf Z}\sim~\!{\rm P}$, then~${\bf F}_\pm ({\bf Z})\sim~\!{\rm U}_d$---that is,  in the convenient notation of measure transportation,  such that~${\bf F}_\pm \# {\rm P} =~\!{\rm U}_d$  (${\bf F}_\pm$ {\it pushes $ {\rm P}$ forward to ${\rm U}_d$}) where~$ {\rm U}_d$ is the spherical uniform over  ${\mathbb S}_d$. Under mild conditions, this ${\bf F}_\pm $ is a homeomorphism---see \cite{figalli2018continuity, Alberto20, Alberto24} for precise statements and details.

 
   \cite{hallin2021distribution} call ${\bf F}_\pm$ the  {\it center-outward distribution function}  of $\rm P$ (of $\bf Z$). For~$d=1$, denoting by $F$ the traditional univariate distribution function, ${\bf F}_\pm$ reduces to $2F - 1$.  
   
 Let ${\bf Z}_1\n, \ldots, {\bf Z}_n\n$ be i.i.d.\ with distribution $\rm P$ (joint distribution ${\rm P}^n$).   \cite{hallin2021distribution}   
%
%
%
 define the empi\-rical counterpart ${\bf F}_\pm\n$ of ${\bf F}_\pm$ as the minimizer of the sum~$\sum _{i=1}^n 
\left\Vert {\bf Z}_i\n -  {\scriptstyle{\mathfrak G}}\n_{\pi (i)}
\right\Vert ^2 
$ among all pairings $({\bf Z}_i\n, {\scriptstyle{\mathfrak G}}\n_{\pi (i)})$ ($\pi$ a permutation of $\{1,\ldots,n\}$) between 
 $\{{\bf Z}_1\n, \ldots, {\bf Z}_n\n\}$ and a grid~${\mathfrak G}\n\! \coloneqq 
 \{
{\scriptstyle{\mathfrak G}}\n_1, \ldots, {\scriptstyle{\mathfrak G}}\n_n
\}$, 
and show that, provided that the empirical distribution over~${\mathfrak G}\n$ converges  to ${\rm U}_d$ as $n\to\infty$, 
$$\max_{1\leq i\leq n} \left\Vert{\bf F}_\pm\n ({\bf Z}\n_i) - {\bf F}_\pm ({\bf Z}\n_i)
\right\Vert \longrightarrow 0 \quad  \text{P-a.s.\ as~} n\to\infty 
$$ (a Glivenko-Cantelli consistency property). 
\begin{center}
\begin{figure}[]\hspace{10mm}
\includegraphics[trim=10mm 15mm 10mm 15mm, clip,width=6.6cm, height=6.6cm]{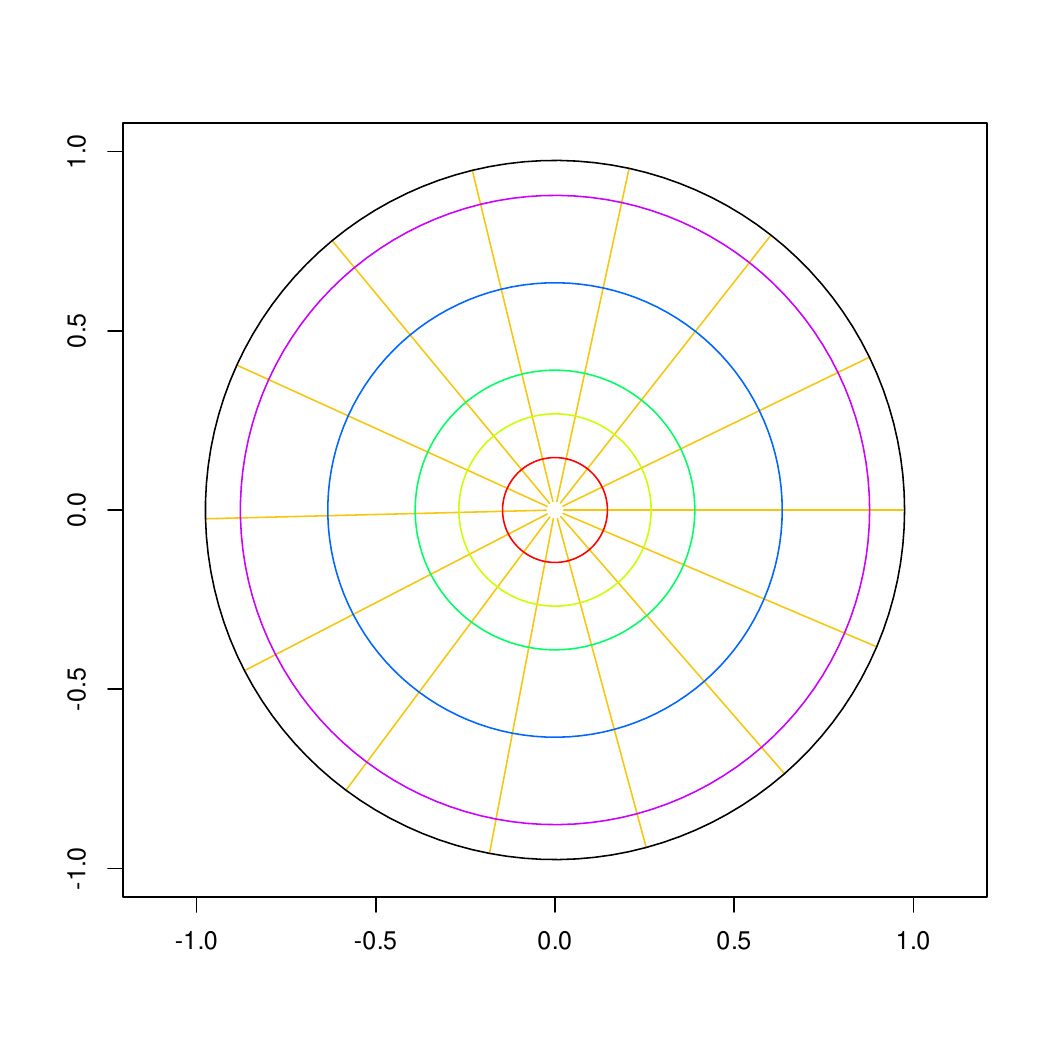}\hspace{5mm} 
\includegraphics[trim=10mm 15mm 10mm 15mm, clip,width=6.6cm, height=6.6cm]{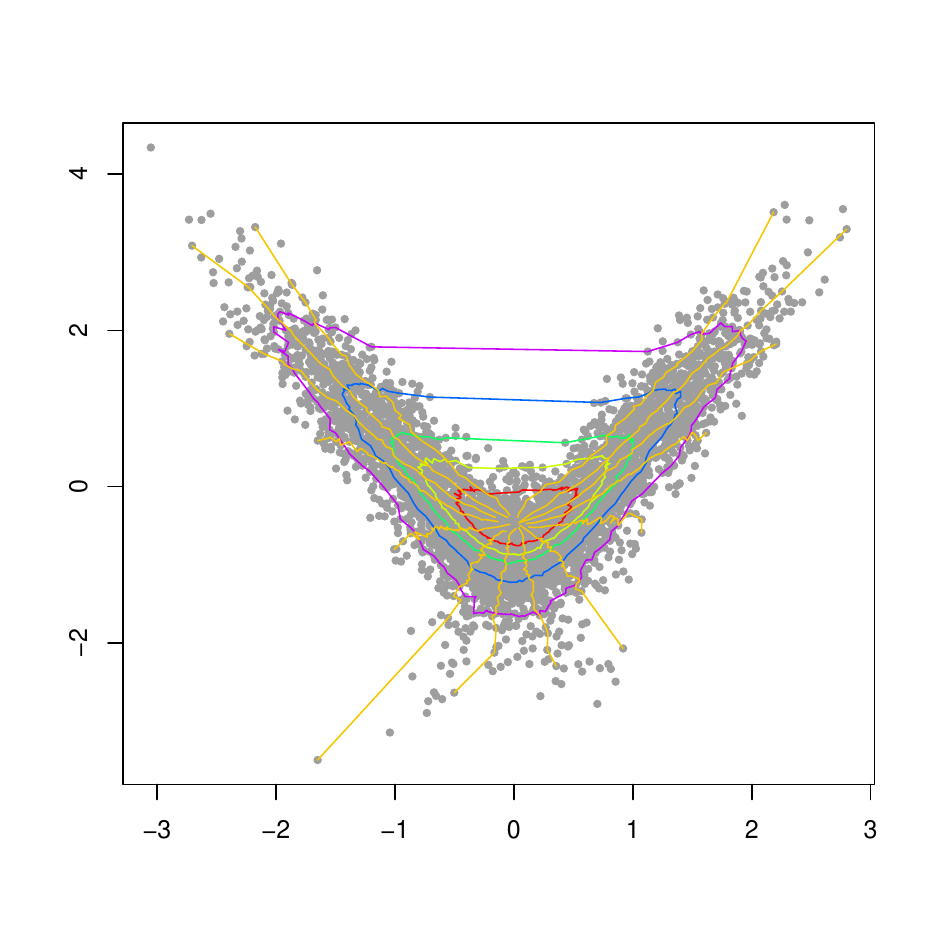} \\ \centering 
\definecolor{color1}{HTML}{FF0000}
\definecolor{color2}{HTML}{CCFF00}
\definecolor{color3}{HTML}{00FF66}
\definecolor{color4}{HTML}{0066FF}
\definecolor{color5}{HTML}{CC00FF}
\definecolor{color6}{HTML}{000000}
\fbox{ \scriptsize $\tau =$  \textcolor{color1}{\textemdash} 0.146 \quad
 \textcolor{color2}{\textemdash} 0.268\quad
 \textcolor{color3}{\textemdash} 0.390 \quad
 \textcolor{color4}{\textemdash} 0.634 \quad
 \textcolor{color5}{\textemdash} 0.878\quad
 \textcolor{color6}{\textemdash} 1.000\quad
 }\caption{The grid $\mathfrak G^{(n)}$ for $d=2$ (left-hand panel) and the empirical quantile contours (right-hand panel) of order $\tau =$ 0.146, 0.268, 0.390, 0.634, and~0.878 for a sample of size $n=5,000$ from a mixture of three bivariate Gaussians, with $n_R=40$ and $n_S=125$. The smooth interpolation of ${\bf F}^{(n)}_\pm$ described in Section~3 of \citet{hallin2021distribution} was implemented; interpolated sign curves are shown in yellow.}
\label{fig:banana}
\end{figure}
\end{center}
Since the spherical uniform is the product of a uniform over the unit sphere and a uniform over the distance to the origin, imposing a similar product structure on   ${\mathfrak G}\n$ is quite natural. Factorizing $n$ into  $n=n_Rn_S + n_0$ with $n_0<\min (n_R, n_S)$, define ${\mathfrak G}\n$ as the intersection between a regular array of~$n_S$ radii\footnote{In Figure~\ref{fig:banana}, these unit vectors are equispaced on the unit sphere $\mathcal S_{d-1}= \mathcal S_1$; this cannot be achieved for~$d>2$ and, more generally, they can be chosen as i.i.d.\ uniform over $\mathcal S_{d-1}$ (and, of course, independent of the data).} of ${\mathbb S}_d$ and a collection of spheres centered at the origin, with radii~$\frac{1}{n_R+1},\ldots,\frac{n_R}{n_R+1}$, along with $n_0$ copies of the origin; see the left-hand panel of Figure~\ref{fig:banana}. \cite{hallin2021distribution} then define the {\it center-outward rank} and  {\it sign} of~${\bf Z}_i\n$ as 
\begin{equation}\label {c-oRandS}
R\n_i\coloneqq (n_R + 1)\Vert {\bf F}_\pm\n({\bf Z}_i\n)
\Vert\quad\text{ and }\quad{\bf S}\n_i\coloneqq  {{\bf F}_\pm\n({\bf Z}_i\n)}/{\Vert {\bf F}_\pm\n({\bf Z}_i\n)
\Vert},
\end{equation}
respectively, and establish (see their Proposition~2.5) distribution-freeness, (non-unique) maximal ancillarity, and---possibly after a simple tie-breaking procedure in case $n_0>1$---their mutual finite-$n$ independence. Figure~\ref{fig:banana} provides a numerical example where ${\bf F}^{(n)}_\pm$, moreover, has been interpolated (within the class of gradients of convex functions) as explained in Section~3 of \citet{hallin2021distribution}. Quantile contours (along which $\Vert {\bf F}^{(n)}_\pm\Vert$ remains constant) and sign curves (along which ${\bf F}^{(n)}_\pm / \Vert {\bf F}^{(n)}_\pm \Vert$ remains constant) are shown in the right-hand panel. We refer to \cite{HallinSurvey} for a review of  the statistical applications of these concepts.

\subsection{Strong maximal nuisance-ancillarity of center-outward ranks and signs}
Assume that LAN holds for the unspecified density experiment ${\mathcal E}\n_{\text{\rm global}}$ in~\eqref{unspecifiedfn} of Example~\ref{ex1},  with  local experiments with global parameters $\pint^{(n)} =\pint_0 + n^{-1/2}\lpint$ and~$f\n 
=(1+n^{-1/2}\lpnui)\pnui_0$ and local parameters $\taub,\, \eta$ where~$f_0: {\bf z}\mapsto f_0({\bf z})$ is a density in $\cal F$ and~$\lpnui : {\bf z}\in\mbR^d\mapsto \lpnui({\bf z})\in\mbR$ belongs to the tangent space for location shifts
\begin{equation*}
	\LPnui_{\pnui_0}
		\!\coloneqq\!
	\left\{\!
	\lpnui {\,\bigg\vert\,}
	\!\!\int\!\! \lpnui({\bf z})\pnui_0({\bf z})\dd\mu_d = 0, \!\int\! \!\lpnui^2({\bf z})\pnui_0({\bf z})\dd\mu_d <\infty\!
	\right\}\!.
\end{equation*}
Observe that $\LPnui_{\pnui_0}$ is countably generated. Recall (from Section~\ref{Sec23} and Proposition~\ref{UniqueAnc},  with an obvious  adaptation of the notation) that, in the limiting Brownian drift experiment,
\begin{enumerate}
\item[--]  for any $\eta^\prime\in\Upsilon_{\pnui_0}$, 
   $B_\eta(u) \coloneqq \CS_{\text{\rm nuis},\lpnui}(u) -u\CS_{\text{\rm nuis},\lpnui}(1),$ $u\in[0,1], \ \eta\in\Upsilon_{\pnui_0}$   is  a    Brownian bridge under $\prob_{{\boldsymbol 0} ,\lpnui^\prime}$: therefore, $B_\eta(u)$, $u\in[0,1]$ is  normal, with mean zero and vari\-ance~$u\!\int\! \lpnui^2({\bf z})
   \dd\mu_d\eqqcolon\!\FisherInfo_{\lpnui,\lpnui}$, irrespective of  $f_0$ and  $\eta^\prime$;  
  \item[--]   ${\mathcal B}^\ddagger_{\thetab_0} \coloneqq \sigma\!\left( \{B_\eta (u): \lpnui\in \Upsilon_{f_0} ,\ u\in[0,1]\}\right)$ is the unique maximal  nuisance-ancillary\linebreak 
 $\sigma$-field in the limiting experiment~${\mathcal E}_{\text{\rm drift}}$.
\end{enumerate}

Denote by $\CODFn$ the empirical center-outward distribution function computed from the resi\-duals~${\bf Z}^{(n)}_1(\pint_0),\ldots ,{\bf Z}^{(n)}_n(\pint_0)$ and   let ${\mathcal B}^{\ddagger(n)}_{\thetab_0}\coloneqq \sigma\big(\CODFn({\bf Z}^{(n)}_1(\pint_0)),\ldots,\CODFn({\bf Z}^{(n)}_n(\pint_0))\big)$ (the $\sigma$-field generated by the center-outward ranks and signs of the residuals ${\bf Z}\n_i(\thetab_0)$, we thus have the following result.

\begin{prop} 
Under the assumptions made\footnote{This includes the assumption that    ${\Upsilon}_{f_0}$, as defined above, is separable.}, (i) the sequence $$\CODFn({\bf Z}^{(n)}_1(\pint_0)),\ldots,\CODFn({\bf Z}^{(n)}_n(\pint_0)),\quad n\in{\mathbb N}$$ asymptotically $\En$-weakly generates  ${\mathcal B}^\ddagger_{\thetab_0}
\coloneqq \sigma\!\left( \{B_\eta (u): \lpnui\in \Upsilon_{f_0} ,\ u\in[0,1]\}\right)$; (ii) the sequence ${\mathcal B}^{\ddagger(n)}_{\thetab_0}$ of $\sigma$-fields is strongly maximal ancillary.
\end{prop}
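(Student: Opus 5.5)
The plan is to verify the three assumptions of Theorem~\ref{MainTheorem} for the local sequence ${\mathcal E}\n_{\thetab_0,f_0}$ and the limit ${\mathcal E}_{\text{\rm drift}}$. Part~(ii) then follows from part~(i) via Lemma~\ref{lem:countablyGenerated}.

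\textbf{Assumption~(i).} This is Proposition~\ref{UniqueAnc}(ii), (iii) and~(vi). Countable generation of the limiting $\sigma$-field also has to be checked. It follows from the separability of $\Upsilon_{f_0}$ and the continuity of Brownian paths: ${\mathcal B}^\ddagger_{\thetab_0}$ is generated by $B_{\eta_i}(u_j)$, with $(\eta_i)$ dense in $\Upsilon_{f_0}$ and $(u_j)$ dense in $[0,1]$. This uses the $L^2(f_0)$-continuity of $\eta\mapsto B_\eta$, which holds because $\mathrm{Var}\,B_\eta(u)$ is a continuous quadratic form in $\eta$.

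\textbf{Assumption~(ii).} Under ${\rm P}\n_{\thetab_0,f}$ the residuals ${\bf Z}\n_i(\thetab_0)$ are i.i.d.\ with density $f$. Hence the order statistic of the residuals (the unordered sample) is minimal sufficient and complete for the nonparametric family ${\cal F}$; this is classical. By Proposition~2.5 of \cite{hallin2021distribution}, the center-outward ranks and signs are distribution-free. Moreover, the residual sample is recovered from the pair (order statistic, $\CODFn$-values), because the optimal coupling is a.s.\ unique and the grid is fixed, possibly after tie-breaking. So $\sigma(\FieldSufn\cup{\mathcal B}^{\ddagger(n)}_{\thetab_0})={\mathcal B}\n$, and Basu's theorem (Appendix~\ref{Basu}) gives maximal ancillarity.

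\textbf{Assumption~(iii), which is part~(i) of the statement.} By Lemma~\ref{lem:countablyGenerated} it suffices, for each generator $B_{\eta_i}(u_j)$, to exhibit ${\mathcal B}^{\ddagger(n)}_{\thetab_0}$-measurable $\xi_n$ converging $\En$-weakly to it. I need to fix how $u$ enters. The natural finite-$n$ analogue of the time-indexed score $\Delta_{\text{\rm nuis},\eta}(u)$ is the partial sum
\[
\Delta\n_{\text{\rm nuis},\eta}(u)= n^{-1/2}\sum_{i\le \lfloor nu\rfloor}\eta\big({\bf Z}\n_i(\thetab_0)\big).
\]
The bridge analogue is then $\Delta\n_{\text{\rm nuis},\eta}(u)-u\Delta\n_{\text{\rm nuis},\eta}(1)$. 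Its rank-based version is
\[
\xi_n= n^{-1/2}\sum_{i\le\lfloor nu\rfloor}\Big[\eta\circ{\bf F}_\pm^{-1}\big(\CODFn({\bf Z}\n_i)\big)-\bar\eta_n\Big],
\]
where $\bar\eta_n$ is the grid average and ${\bf F}_\pm$ is the center-outward distribution function of $f_0$. I then proceed in three steps.
\begin{enumerate}
\item Show that under ${\rm P}\n_{\thetab_0,f_0}$, $\xi_n-[\Delta\n_{\text{\rm nuis},\eta}(u)-u\Delta\n_{\text{\rm nuis},\eta}(1)]=o_{\rm P}(1)$. This is a H\'ajek-type asymptotic representation for center-outward rank statistics, built from the Glivenko--Cantelli property $\max_i\Vert\CODFn-{\bf F}_\pm\Vert\to0$. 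Since $\xi_n$ is exchangeable-sum-like, I will use the finite-population (permutational) CLT and compute the $L^2$ distance under the uniform permutation law, first for bounded continuous $\eta$ and then extending by an $L^2$ approximation.
\item Get the joint convergence required in Definition~\ref{weakcv} with the likelihood ratios. Under LAN the log-likelihood ratios are $\taub^\top\Delta\n_I(1)+\Delta\n_{\text{\rm nuis},\eta}(1)-\tfrac12(\cdots)+o_{\rm P}(1)$. So the claim reduces to a joint functional CLT for the partial-sum processes $(\Delta\n_I(\cdot),\Delta\n_{\text{\rm nuis},\eta}(\cdot))$ under ${\rm P}\n_{\thetab_0,f_0}$ toward the Brownian motion~\eqref{Bmotions}, combined with the continuous mapping theorem.
\item Pass from the generators to the whole $\sigma$-field by Lemma~\ref{lem:countablyGenerated}. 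Then invoke Theorem~\ref{MainTheorem} and Corollary~\ref{CorolCommute}, which yields~(ii).
\end{enumerate}

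The main obstacle is step~1 for general $\eta\in L^2(f_0)$. The quantile map ${\bf F}_\pm^{-1}$ may be unbounded near the unit sphere, and $\eta\circ{\bf F}_\pm^{-1}$ need not be continuous. I would first handle $\eta$ continuous with compact support, where Glivenko--Cantelli plus dominated convergence suffices. I would then approximate a general $\eta$ in $L^2(f_0)$, using the fact that the permutational variance of the rank score sum is bounded by the grid second moment, which converges to $\int\eta^2f_0$.
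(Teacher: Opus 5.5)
Your architecture matches the paper's: countable generation of ${\mathcal B}^\ddagger_{\thetab_0}$ by the $B_{\lpnui_i}(u_j)$, a two-sample center-outward rank statistic with a H\'ajek representation, Cram\'er--Wold, and Lemma~\ref{lem:countablyGenerated}. The only difference there is that you re-derive Proposition~3.1 of \cite{hallin2021efficient} through the uniform permutation law rather than citing it. Your $\xi_n$ is in fact the right statistic. The paper credits \eqref{2-samplestat} with limiting variance $u\FisherInfo_{\lpnui\lpnui}$, which is that of $\CS_{\text{\rm nuis},\lpnui}(u)$; your centering by $\bar\eta_n$ gives the bridge variance $u(1-u)\FisherInfo_{\lpnui\lpnui}$. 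Composing with ${\bf F}_\pm^{-1}$ is also needed, since $\lpnui$ is a function on $\mathbb{R}^d$, not on the unit ball.

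\textbf{The gap is your Step~2.} LAN supplies $\Deltab\n_I$ only as a single vector. No partial-sum process $\Deltab\n_I(\cdot)$ is part of the assumptions, and nothing there yields a functional CLT toward \eqref{Bmotions}. In the limit, $B_{\lpnui}(u)$ is independent of $\CSdrift(1)$. So, once your Step~1 holds, Definition~\ref{weakcv} requires that $(\Deltab\n_I,\CS\n_{\text{\rm nuis},\lpnui'}(1),\xi_n)$ be jointly asymptotically normal with $\mathrm{Cov}(\Deltab\n_I,\xi_n)\to\mfzero$. That requirement depends on the model. In a regression with central sequence $n^{-1/2}\sum_i\mathbf{c}_i^\top\boldsymbol\varphi_{f_0}(\mathbf{Z}_i)$, this covariance equals $n^{-1}\sum_{i\le\lfloor nu\rfloor}(\mathbf{c}_i-\bar{\mathbf{c}})^\top\mathrm{E}_{f_0}[\boldsymbol\varphi_{f_0}(\mathbf{Z})\lpnui(\mathbf{Z})]$. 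It does not vanish for a design sorted in $i$ (e.g.\ $c_i=i/n$). By Le Cam's third lemma, $\xi_n$ then acquires a drift under $\probn_{\taub,0}$, whereas $B_{\lpnui}(u)$ has none under $\prob_{\taub,0}$. So this $\xi_n$ does not $\En$-converge to $B_{\lpnui}(u)$.

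\textbf{How the paper handles this step.} The paper uses no CLT here. For fixed $n$, the order statistic of the $\thetab_0$-residuals is complete sufficient for the $\taub=\mfzero$ subfamily, and the rank statistic is ancillary. By Basu's theorem, the rank statistic is therefore exactly independent of every nuisance ratio $\mathrm{d}\probn_{\mfzero,\lpnui}/\mathrm{d}\probn_{\mfzero,0}$. Combined with the independence of bridges and likelihood ratios in the limit, joint convergence reduces to marginal convergence. You should use that argument for the pairs $(\mfzero,\lpnui)$; it is cleaner than your Step~2. It does not reach pairs with $\taub\neq\mfzero$, however, because those ratios are not functions of the residual order statistic. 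That is exactly why ranks carry information about $\taub$. So the asymptotic uncorrelatedness above cannot come from Basu and must be imposed and checked. It holds, for instance, when $\Deltab\n_I$ and the nuisance scores are partial sums of jointly stationary martingale differences; the martingale functional CLT then gives precisely your Step~2. It fails in the sorted-design example.

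Two smaller points:
\begin{itemize}
\item Your $L^2$ extension is unnecessary. By the $L^2(f_0)$-continuity of $\lpnui\mapsto B_{\lpnui}$, which you already invoke, the dense generators can be taken bounded and continuous, and Lemma~\ref{lem:countablyGenerated} does the rest. As stated, the extension is also invalid: approximate scores $\lpnui\circ{\bf F}_\pm^{-1}$ evaluated on the grid are not controlled for arbitrary $\lpnui\in L^2(f_0)$, since the grid second moment need not converge to $\int\lpnui^2f_0$. Exact scores would be needed.
\item Part~(ii) does not follow from Lemma~\ref{lem:countablyGenerated} or from Theorem~\ref{MainTheorem}. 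It is your finite-$n$ maximal-ancillarity argument (complete sufficient order statistic, reconstruction of the sample, Basu's third theorem) combined with part~(i).
\end{itemize}
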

\noindent{\sc Proof.} The proof proceeds in several steps.

First, observe that ${\mathcal B}^\ddagger_{\thetab_0}$ is countably generated by~$\left\{ \Bnui (u): \lpnui\in  \Upsilon^0_{f_0},\ u \in {\cal U}_0\right\}$ where~$ \Upsilon^0_{f_0}$ and ${\cal U}_0$ denote dense subsets of $\Upsilon_{f_0}$ and $[0,1]$, respectively.   

Next, let us show that, for any $\lpnui\in \Upsilon_{f_0}$ and $u\in [0,1]$, $B_\eta (u)$ is the  limit in distribution, under ${\mathbb P}\n_{{\boldsymbol 0},0}={\rm P}_{\thetab_0,f_0}\n$, of the  sequence
\begin{equation}\label{2-samplestat}
    \tenq{T}_{\lpnui ;u}\n\coloneqq	n^{-1/2}\sum_{i=1}^{\lfloor un\rfloor}\lpnui\left(\CODFn({\bf Z}^{(n)}_i(\pint_0))\right)\quad n\in{\mathbb N}
\end{equation}
of $\big(\CODFn({\bf Z}^{(n)}_1(\pint_0)),\ldots,\CODFn({\bf Z}^{(n)}_n(\pint_0))\big)$-measurable statistics. For any $n$, $\eta$, and~$u\in[0,1]$, indeed, $\tenq{T}\n_{\lpnui ;u}\vspace{1mm}$ is a center-outward rank statistic for the two-sample problem, of the form considered in  \cite{hallin2021efficient} in their Proposition~3.1, with score function~$\lpnui$ and sample sizes~$\lfloor un\rfloor$ and $n-\lfloor un\rfloor$), respectively. It follows from the H\'{a}jek representation and asymptotic normality results in that Proposition~3.1 (note that $\eta$ has been assumed to be square-integrable, hence satisfies the required regularity assumptions) that, after due centering, $\tenq{T}\n_{\lpnui ;u} $ under ${\mathbb P}\n_{{\boldsymbol 0},0}={\rm P}_{\thetab_0,\varthetab_0}\n$ is asymptotically normal as $n\to\infty$, with mean zero and variance~$u\int \lpnui^2({\bf z})f_0({\bf z})\dd\mu_d=u\FisherInfo_{\lpnui,\lpnui}$.
	
The same technique, combined with the classical Cram\'{e}r-Wold device applied to the joint distribution of $\tenq{\bf T}\n\coloneqq \left(\tenq{T}\n_{\lpnui_1 ;u_1},\ldots,\tenq{T}\n_{\lpnui_1 ;u_{\ell}},\tenq{T}\n_{\lpnui_2 ;u_1},\ldots,\tenq{T}\n_{\lpnui_2 ;u_{\ell}},\ldots, \tenq{T}\n_{\lpnui_k ;u_1},\ldots, \tenq{T}\n_{\lpnui_k ;u_{\ell}}\right)$ for any finite $k$-tuple  $\lpnui_1,\ldots,\lpnui_k$ in $\Upsilon_{f_0}$ and any finite $\ell$-tuple $u_1,\ldots,u_{\ell}$ in $[0,1]$ yields the normal weak limits, with mean zero and covariance matrix
$$\Big(
u_{j_1}u_{j_2}\int\eta_{i_1}({\bf z})\eta_{i_2}({\bf z}){\rm d}\mu_d 
\Big)_{(i_1, i_2)\in\{1,\ldots,k\}; (j_1,j_2) \in\{1,\ldots,\ell\}},$$
of the distributions of $\tenq{\bf T}\n$ under ${\mathbb P}\n_{{\boldsymbol 0},0}={\rm P}_{\thetab_0,f_0}\n$ (which does not depend on $f_0$).

Minimal sufficiency, completeness, distribution-freeness and Basu's classical theorem then imply the independence, under $\probn_{\mfzero,0}$, of $\tenq{\bf T}\n$ and $\dd\probn_{\lpint,\lpnui}/\dd\probn_{\mfzero,0}$; independence also holds in the limiting experiment between the minimal sufficient  likelihood ratios and the ancillary Brownian bridges. Joint convergence, in the Definition~\ref{eq:convAn} of ${\cal E}\n$-weak convergence, therefore, follows from marginal convergence in distribution in \eqref{eq:convAn}. 

The desired ${\cal E}\n$-weak convergence result finally follows from Theorem~\ref{MainTheorem}.\hfill$\square$.\medskip

This has far-reaching consequences in semiparametric inference:
\begin{enumerate}
\item[(i)] semiparametric efficiency bounds at $(\thetab , f) = (\thetab_0 , f)$ can be reached by ${\mathcal B}^\ddagger_{\thetab_0}$-measurable  procedures, that is,   by strictly nuisance-free (distribution-free) procedures based on the center-outward ranks and signs of the residuals ${\bf Z}^{(n)}_1(\pint_0),\ldots, {\bf Z}^{(n)}_n(\pint_0)$; in parti\-cular, semiparametrically efficient (at  $(\thetab , f) = (\thetab_0 , f_0)$) inference for $\thetab$ can be based on~${\rm E}\big[ 
{\boldsymbol\Delta}_{{\text{\rm int}}}\n({\thetab_0},f_0)
\vert\, {\mathcal B}^{\ddagger(n)}_{\thetab_0}
\big]$ (expectation taken under ${\mathbb P}\n_{{\boldsymbol 0},0} = {\rm P}\n_{\thetab_0, f_0}$)---this expectation does not depend on $f_0$; neither does its  distribution under ${\mathbb P}\n_{{\boldsymbol 0},0} = {\rm P}\n_{\thetab_0, f_0}$; 
\item[(ii)] these ${\mathcal B}^{\ddagger(n)}_{\thetab_0}$-measurable procedures do not require $f_0$ to be the actual density, which needs not be estimated;
\item[(iii)] if, however, the actual density is consistently estimated (estimator $\hat{f}\n$), the distribution of~${\rm E}\big[ 
{\boldsymbol\Delta}_{{\text{\rm int}}}\n({\thetab_0},\hat{f}\n)
\vert\, {\mathcal B}^{\ddagger(n)}_{\thetab_0}
\big]$, where the estimator $\hat{f}^{(n)}$ only depends on the order statistic of the residuals (a very natural assumption), remains conditionally (on that order statistic) nuisance-free and yields uniformly semiparametrically efficient distribution-free testing procedures;
\item[(iv)] the components of the central sequence ${\boldsymbol\Delta}^{(n)}_{\text{\rm int}}({\boldsymbol\theta}_0,f_0)$ typically are linear combinations of variables of the form  
\begin{align*}
{\bf T}^{(n)} 
&\coloneqq \Big(
\sum_{i=1}^n (c_i^{(n)}-\bar{c}^{(n)})^2
\Big)^{-1/2}
\sum_{i=1}^n (c_i^{(n)}-\bar{c}^{(n)}) {\mathbf J}_f({\bf F}_\pm ({\bf Z}_i^{(n)})).
\end{align*}
 where the $(c^{(n)}_i - \bar{c}^{(n)})$'s, $i=1,\ldots, n$ are centered constants characterizing the problem under study (e.g., covariates: see, for instance, Proposition 4.1 in \cite{hallin2021efficient}) and ${\mathbf J}_f : {\mathbb S}_d \to{\mathbb R}^d$ is some continuous score function (e.g., the location score for $f$).  Proposition 3.1 (Ibid.) then establishes, under mild assumptions on the $c^{(n)}_i$'s, $f$, and ${\mathbf J}_f$,  the asymptotic equivalence as $n\to\infty$, under ${\mathbb P}^{(n)}_{{\boldsymbol 0},0}={\rm P}^{(n)}_{{\boldsymbol\theta}_0, {\boldsymbol\vartheta}_0}$,  of ${\bf T}^{(n)} $, its  
 {\it exact-score} version 
\begin{align*}
   \tenq{\bf T}^{(n)}_e &\coloneqq   
 \Big(
\sum_{i=1}^n (c_i^{(n)}-\bar{c}^{(n)})^2
\Big)^{-1/2}
    \sum_{i=1}^n
     (c_i^{(n)}-\bar{c}^{(n)}) 
      \E\left[ 
{\mathbf J}_f({\bf F}_\pm({\bf Z}_i^{(n)})) \,\big\vert\, {\bf F}_\pm^{(n)}({\bf Z}_i^{(n)})
 \right], 
\end{align*}
 and its much simpler {\it approximate-score} version
\begin{align*} 
  \tenq{\bf T}^{(n)}_a 
 &\coloneqq 
  \Big(
\sum_{i=1}^n (c_i^{(n)}-\bar{c}^{(n)})^2
\Big)^{-1/2}
\sum_{i=1}^n (c_i^{(n)}-\bar{c}^{(n)}) {\mathbf J}_f({\bf F}^{(n)}_\pm ({\bf Z}_i^{(n)})).
\end{align*}
This {\it asymptotic representation} result (still, in H\'ajek's terminology) allows for defining rank-based, hence distribution-free under ${\mathbb P}^{(n)}_{{\boldsymbol 0},0}={\rm P}^{(n)}_{{\boldsymbol\theta}_0, {\boldsymbol\vartheta}_0}$ for all $n$) versions of the central sequence ${\boldsymbol\Delta}^{(n)}_{\text{\rm int}}({\boldsymbol\theta}_0,f_0)$. 
\end{enumerate}

The above-mentioned properties demonstrate the considerable finite-sample advantages of ${\rm E}\big[ \Deltab\n(\thetab,f) \vert\, {\mathcal A}^{(n)}_{\scriptscriptstyle\text{{\rm Anc}}}\big]$ over classical tangent space projections $\Deltab^{(n)*}(\thetab,\hat{f}\n)$: nuisance-ancillarity also for finite sample size $n$, uniform asymptotics, and no need to estimate $f$. In more casual terms, this last property implies that the procedure based on some, possibly misspecified, assumed density $g$ always leads to valid inference, like pseudo or quasi maximum likelihood methods. When the assumed density $g$ equals the actual density $f_0$, the procedure attains the semiparametric lower bound.

\section{Conclusions}\label{sec:Conclusion}
\noindent This paper builds on a long tradition in statistics of using concepts like sufficiency and ancillarity to build optimal inference procedures. While, in particular, ancillarity is a useful concept, its application is hampered by the fact that maximal ancillary $\sigma$-fields are generally not unique. As a result, it is not clear which of the maximal ancillary $\sigma$-fields to base inference on.

We resolve this issue by introducing the notion of weak convergence of (ancillary) $\sigma$-fields and the observation that, in  limiting experiments, a unique maximal ancillary $\sigma$-field often exists. Therefore, it is natural to choose, in the sequence of (localized) experiments, a  maxi\-mal ancillary $\sigma$-field---call it  {\it strongly maximal nuisance-ancillary}---that weakly converges to that unique limiting maximal ancillary $\sigma$-field. Inference procedures that are measurable with respect to strongly maximal nuisance-ancillary  $\sigma$-fields are finite-sample nuisance-free, and  their risk functions converge to the  risk functions of the nuisance-free procedures of the limiting experiment. In the particular case of Locally Asymptotically Normal (LAN) experi\-ments, conditioning central sequences on on strongly maximal nuisance-ancillary  $\sigma$-fields yields semiparametrically efficient procedures that are finite-sample nuisance-free while traditional tangent space projections only yield {\it asymptotically} nuisance-free semiparametrically efficient procedures.

We illustrate how this approach leads to semiparametrically optimal inference in the context of multivariate  LAN experiments with unspecified innovation density, based on the measure-transportation-related concepts of center-outward ranks and signs. Nuisance-ancillarity then is distribution-freeness, and semiparametric efficiency bounds in these experiments can be achieved by  rank-based methods (e.g., tasts based on test statistics measurable with respect to center-outward ranks and signs) enjoying finite-sample distribution-freeness.

The notions on weak convergence of $\sigma$-fields and strong maximal nuisance-ancillarity, however, do not rely on the limit experiments to be Locally Asymptotically Normal, and we conjecture that the approach outlined in the present paper can be extended, {\it mutatis mutandis},  to more general situations where the limit experiments are Locally Asymptotically Mixed Normal (LAMN) or Locally Asymptotically Brownian Functional (LABF).

\bibliographystyle{imsart-nameyear}
\bibliography{references}

\renewcommand{\thesection}{\Alph{section}}
\setcounter{section}{0}

\begin{center}{\sc Appendix}\end{center}

\section{Minimal sufficiency and maximal ancillarity}\label{Basu}\hspace{-3mm}\footnote{This appendix is largely borrowed, for convenient reference, from the online supplement of \cite{hallin2021distribution}.}

This appendix collects, for ease of reference, some classical and less classical definitions and results about sufficiency and ancillarity which are scattered across Basu's papers; some of them (such as the concept of {\it strong essential equivalence}) are slightly modified to adapt our needs.   

The celebrated result commonly known as Basu's Theorem was first established as Theorem~2 in \cite{basu1955}. The same paper also contains a Theorem~1, of which Theorem~2 can be considered a partial converse. Call them Basu's First and Second Theorems, respectively. Recall that a statistics $W$ is called ancillary with respect to ${\mathcal P}$ if its distribution is identical for all ${\rm P}\in{\mathcal P}$.

\begin{prop}[Basu's First Theorem]\label{Basu1}
Let $S$ be sufficient for a family~$\cal P$ of distributions over some probability space $({\mathcal X},\mathcal A)$. Then, if a statistic $W$ is $\rm P$-independent of $S$ for all~${\rm P}\in{\mathcal P}$, it is ancillary with respect to ${\mathcal P}$.
\end{prop}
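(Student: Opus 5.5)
The plan is to work event by event on the $\sigma$-field generated by $W$, combining the sufficiency of $S$ with the independence hypothesis. Fix a measurable set $A$ in the range space of $W$. Sufficiency of $S$ for $\mathcal P$ means there is a single $\sigma(S)$-measurable function $\varphi_A(S)$, not depending on ${\rm P}$, such that
\[
\varphi_A(S)={\rm P}\big(W\in A\,\vert\, S\big)\quad {\rm P}\text{-a.s., for all } {\rm P}\in\mathcal P .
\]
On the other hand, independence of $W$ and $S$ under ${\rm P}$ gives ${\rm P}(W\in A\,\vert\,S)={\rm P}(W\in A)$ ${\rm P}$-a.s. Hence, for each ${\rm P}\in\mathcal P$, the common version $\varphi_A(S)$ is ${\rm P}$-a.s. equal to the constant $c_{\rm P}(A)\coloneqq{\rm P}(W\in A)$.

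It then remains to show that $c_{\rm P}(A)$ does not depend on ${\rm P}$, which is exactly the ancillarity of $W$. Take ${\rm P}_1,{\rm P}_2\in\mathcal P$. The function $\varphi_A(S)$ equals $c_{{\rm P}_1}(A)$ off a ${\rm P}_1$-null set and $c_{{\rm P}_2}(A)$ off a ${\rm P}_2$-null set. If the restrictions of ${\rm P}_1$ and ${\rm P}_2$ to $\sigma(S)$ are not mutually singular, some $\sigma(S)$-event has positive probability under both and lies outside both null sets. On that event $\varphi_A(S)$ takes both values, which forces $c_{{\rm P}_1}(A)=c_{{\rm P}_2}(A)$. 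The same conclusion extends along finite chains ${\rm P}_1,\ldots,{\rm P}_m$ in which consecutive members are not mutually singular on $\sigma(S)$. Since $A$ is arbitrary, the law of $W$ is then the same under every ${\rm P}\in\mathcal P$.

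The main obstacle is precisely this last step. Without an extra assumption the statement fails: if $\mathcal P$ splits into two subfamilies carried by disjoint $\sigma(S)$-measurable sets, then $\varphi_A(S)$ can take different constant values on the two pieces. One can then construct $W$, independent of $S$ under every ${\rm P}$, whose law differs between the two pieces. I would therefore state and use the connectedness condition of \cite{KandT75}, namely that $\mathcal P$ admits no splitting set in $\sigma(S)$. Concretely, any two members of $\mathcal P$ must be linked by a chain of members whose restrictions to $\sigma(S)$ are pairwise non-singular. Under that condition the argument above closes.

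This condition is trivially satisfied in the applications of the paper. In the Brownian drift nuisance experiment all $\prob_{\mfzero,\eta}$ are mutually absolutely continuous, by the Girsanov form \eqref{BrownlogL} of the likelihood ratios. The same holds in the finite-$n$ LAN experiments whenever the ${\rm P}\n_{\thetab_0,\varthetab}$ are mutually absolutely continuous. In both cases a single step of the chain already suffices.
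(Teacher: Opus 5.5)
Your argument is correct and is the standard one. The paper gives no proof of this proposition. It only remarks that the statement fails in full generality and holds under the condition of \cite{KandT75}, which is trivially met when the members of $\mathcal P$ are mutually absolutely continuous. That is exactly the diagnosis and resolution you reach, including the Girsanov-based check for the Brownian drift nuisance experiment.

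One claim should be corrected. Your chain condition (consecutive members non-singular on $\sigma(S)$) is not a restatement of the Koehn--Thomas condition. It is a version of the connectedness condition of \cite{basu1958}, which, as the paper's own remark indicates, is considerably stronger. The two coincide for countable $\mathcal P$ but not in general.

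For instance, take $[0,1]$ with its Borel $\sigma$-field and $S$ the identity. Let $\mathcal P$ consist of all $\tfrac12(\delta_x+\delta_y)$ with $x,y\in B$ together with all $\tfrac12(\delta_x+\delta_y)$ with $x,y\notin B$, where $B$ is not Borel. There are two chain classes, so your condition fails. Yet any splitting set would have to be $B$ or its complement, so none exists and the conclusion holds. Under the genuine Koehn--Thomas hypothesis, therefore, your chaining step does not close.

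The fix is one line and simpler than chaining. Suppose $c_{\rm P}(A)$ took two distinct values. Then $E\coloneqq\{\varphi_A(S)=c_{{\rm P}_1}(A)\}\in\sigma(S)$ has ${\rm P}$-probability one or zero for every ${\rm P}\in\mathcal P$, according as $c_{\rm P}(A)=c_{{\rm P}_1}(A)$ or not, with both values attained. That is a splitting set. Combined with your observation that a splitting set $E$ makes $W=\indicator_E$ independent of $S$ but not ancillary, this gives the necessary and sufficient form. For the mutually absolutely continuous families used in the paper, either condition suffices.
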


\begin{prop}[Basu's Second Theorem]\label{Basu2}
Let $T$ be (boundedly) complete and sufficient for a family~$\cal P$  of distributions over some probability space $({\mathcal X},\mathcal A)$. Then, if a statistic~$W$ is ancillary with respect to ${\mathcal P}$, it is $\rm P$-independent of $T$ for all ${\rm P}\in{\mathcal P}$. 
\end{prop}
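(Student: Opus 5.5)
The plan is the classical conditioning argument: use sufficiency to obtain a single, $\mathrm{P}$-free version of the conditional probability of any $W$-event given $T$, and then use bounded completeness to show that this conditional probability is a.s.\ constant. Independence then follows at once.

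First, fix an arbitrary measurable set $B$ in the range space of $W$. By ancillarity, $p_B\coloneqq {\rm P}(W\in B)$ is the same for all ${\rm P}\in{\mathcal P}$. Since $T$ is sufficient for ${\mathcal P}$, there is a $\sigma(T)$-measurable function $\varphi_B(T)$ that serves as a version of ${\rm P}(W\in B\mid T)$ simultaneously for every ${\rm P}\in{\mathcal P}$. This is exactly the defining property of sufficiency, applied to the bounded statistic $\indicator[W\in B]$. The function $\varphi_B$ can be chosen with values in $[0,1]$, so it is bounded.

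Second, consider the bounded $\sigma(T)$-measurable statistic $\zeta\coloneqq\varphi_B(T)-p_B$. For every ${\rm P}\in{\mathcal P}$, the tower property gives
\[
{\rm E}_{\rm P}[\zeta]={\rm P}(W\in B)-p_B=0 .
\]
Bounded completeness of $T$ then yields $\varphi_B(T)=p_B$ ${\rm P}$-a.s.\ for all ${\rm P}\in{\mathcal P}$.

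Third, take any $A$ in $\sigma(T)$, that is, any event of the form $\{T\in C\}$. For every ${\rm P}\in{\mathcal P}$,
\[
{\rm P}(W\in B,\,T\in C)={\rm E}_{\rm P}\big[\indicator[T\in C]\,\varphi_B(T)\big]=p_B\,{\rm P}(T\in C)={\rm P}(W\in B)\,{\rm P}(T\in C).
\]
Since $B$ and $C$ are arbitrary, $W$ and $T$ are ${\rm P}$-independent for every ${\rm P}\in{\mathcal P}$.

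The only delicate step is the first one. One must make sure that the version of the conditional probability can be taken common to all of ${\mathcal P}$ and bounded, so that the completeness hypothesis applies to a single statistic not depending on ${\rm P}$. Null sets must also be understood as ${\mathcal P}$-null, that is, null for every ${\rm P}\in{\mathcal P}$. I would handle this directly from the definition of sufficiency, as a common conditional expectation for bounded statistics, rather than through factorization. Factorization would require domination, which is not needed here.
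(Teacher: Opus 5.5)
Your proof is correct and is the classical argument of \cite{basu1955}, which the paper cites for this result without reproducing it: sufficiency gives a common version $\varphi_B(T)$ of ${\rm P}(W\in B\mid T)$, and ancillarity gives ${\rm E}_{\rm P}[\varphi_B(T)-p_B]=0$ for all ${\rm P}\in{\mathcal P}$, so bounded completeness forces $\varphi_B(T)=p_B$ almost surely, which yields independence. Your choice of a $[0,1]$-valued $\varphi_B$ is harmless, because truncating a common version changes it only on a set that is null for every ${\rm P}\in{\mathcal P}$.
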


Basu's original proof of Proposition~\ref{Basu1} was flawed, however, and Basu's First Theorem does not hold with full generality. \cite{basu1958} realized that problem and fixed it by imposing on $\mathcal P$ a sufficient additional  condition of {\it connectedness}. Some twenty years later, that condition has been replaced \citep{KandT75} with a considerably weaker necessary and sufficient one which is trivially satisfied in the context of this paper where all measures in $\mathcal P$ are assumed mutually absolutely continuous, hence share  the same null sets.
   
Recall that a sub-$\sigma$-field ${\mathcal A}_{0}$ of $\mathcal A$ such that ${\rm P}_1(A)= {\rm P}_2(A)$ for all $A\in{\mathcal A}_0$ and all~${\rm P}_1, {\rm P}_2$ in~$\mathcal P$ is called {\it ancillary}. Clearly, the $\sigma$-field ${\mathcal A}_{V}$ generated by an ancillary statistic $V$ is ancillary. Contrary to sufficient $\sigma$-fields (the smaller, the better), it is desirable for ancillary $\sigma$-fields to be a large as possible. While minimal sufficient $\sigma$-fields, when they exist, are unique,  maximal ancillary $\sigma$-fields typically exist, but are neither unique nor easily characterized---due to, among other things, null-sets issues. 
   
\cite{basu1959} therefore introduced the notions of {$\mathcal P$-essentially equivalent} and {$\mathcal P$-essentially  maximal} sub-$\sigma$-fields and established a useful sufficient condition for an ancillary statistic to be essentially  maximal. In this paper, we avoid these null-sets complications by restricting to families $\mathcal P$ of mutually absolutely continuous  distributions which, therefore, are sharing the same null sets. This yields the following version of Basu's result---call it Basu's Third Theorem---which is be sufficient for our needs.

      
\begin{prop}[Basu's Third Theorem for families of mutually absolutely conti\-nuous distributions]\label{Basu3}
Let $\mathcal P$ be a family of mutually absolutely continuous distributions over~$({\mathcal X},{\mathcal A})$. Denote by~${\mathcal A}_{\text{\rm suff}}$ a  (boundedly) complete and sufficient (for $\mathcal P$) sub-$\sigma$-field of~$\mathcal A$. Then, any ancillary sub-$\sigma$-field ${\mathcal A}_{\text{\rm anc}}$ (i) containing all $\mathcal P$-null sets, and (ii) such that $\sigma\big({\mathcal A}_{\text{\rm suff}}\cup {\mathcal A}_{\text{\rm anc}}\big)=\mathcal A$  is 
maximal ancillary. 
\end{prop}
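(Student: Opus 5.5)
The plan is to prove maximality directly. Suppose ${\mathcal A}'$ is an ancillary sub-$\sigma$-field of ${\mathcal A}$ with ${\mathcal A}_{\text{\rm anc}}\subseteq{\mathcal A}'$, and fix an arbitrary $A\in{\mathcal A}'$. I will show that $A\in{\mathcal A}_{\text{\rm anc}}$. The idea is to express $\indicator_A$ as a conditional expectation given the whole $\sigma$-field ${\mathcal A}=\sigma\big({\mathcal A}_{\text{\rm suff}}\cup{\mathcal A}_{\text{\rm anc}}\big)$, and to show that this conditional expectation coincides with a ${\mathcal A}_{\text{\rm anc}}$-measurable, ${\rm P}$-free function.

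The first step uses Basu's Second Theorem (Proposition~\ref{Basu2}). Every ${\mathcal A}'$-measurable statistic is ancillary; in particular $\indicator_{A\cap B}$ and $\indicator_B$ are ancillary for every $B\in{\mathcal A}_{\text{\rm anc}}$. Hence each of them is ${\rm P}$-independent of ${\mathcal A}_{\text{\rm suff}}$ for all ${\rm P}\in{\mathcal P}$.

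The second step constructs a common version of the conditional probability of $A$. The joint law of $\indicator_A$ and any ${\mathcal A}_{\text{\rm anc}}$-measurable statistic is ${\rm P}$-free, since both are ${\mathcal A}'$-measurable and ${\mathcal A}'$ is ancillary. Let $g$ be a version of ${\rm E}_{{\rm P}_0}[\indicator_A\,\vert\,{\mathcal A}_{\text{\rm anc}}]$ for some fixed ${\rm P}_0\in{\mathcal P}$. Then ${\rm E}_{\rm P}[g\indicator_B]={\rm P}(A\cap B)$ for every ${\rm P}\in{\mathcal P}$ and every $B\in{\mathcal A}_{\text{\rm anc}}$. The reason is that ${\rm E}_{\rm P}[g\indicator_B]$ depends only on the ${\rm P}$-distribution of the ${\mathcal A}_{\text{\rm anc}}$-measurable variable $g\indicator_B$, which is ${\rm P}$-free, and the same holds for ${\rm P}(A\cap B)$.

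The third step identifies $g$ with $\indicator_A$. For every $S\in{\mathcal A}_{\text{\rm suff}}$ and $B\in{\mathcal A}_{\text{\rm anc}}$, the independence from the first step gives
$$
{\rm P}(A\cap B\cap S)={\rm P}(S)\,{\rm P}(A\cap B)={\rm P}(S)\,{\rm E}_{\rm P}[g\indicator_B]={\rm E}_{\rm P}[g\indicator_B\indicator_S].
$$
The last equality again uses the independence of the ${\mathcal A}_{\text{\rm anc}}$-measurable variable $g\indicator_B$ from ${\mathcal A}_{\text{\rm suff}}$, which follows from Proposition~\ref{Basu2} together with a monotone class argument extending it from indicators to bounded variables. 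Sets of the form $S\cap B$ form a $\pi$-system generating $\sigma\big({\mathcal A}_{\text{\rm suff}}\cup{\mathcal A}_{\text{\rm anc}}\big)={\mathcal A}$. Since $g$ is bounded, Dynkin's $\pi$-$\lambda$ theorem then yields ${\rm E}_{\rm P}[g\indicator_C]={\rm P}(A\cap C)$ for all $C\in{\mathcal A}$. Hence $g={\rm E}_{\rm P}[\indicator_A\,\vert\,{\mathcal A}]=\indicator_A$ ${\rm P}$-a.s.

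The final step concludes. The function $\indicator_A$ agrees ${\rm P}$-a.s. with the ${\mathcal A}_{\text{\rm anc}}$-measurable $g$, and the ${\rm P}$-null sets are common to all of ${\mathcal P}$ by mutual absolute continuity. So $A$ differs from the ${\mathcal A}_{\text{\rm anc}}$-set $\{g=1\}$ by a ${\mathcal P}$-null set, and condition~(i) puts $A$ in ${\mathcal A}_{\text{\rm anc}}$. Thus ${\mathcal A}'={\mathcal A}_{\text{\rm anc}}$, which is maximality.

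The delicate point is the ${\rm P}$-freeness of $g$ in the second step, that is, making sure a single version works simultaneously for all ${\rm P}$. This is where the ancillarity of the larger $\sigma$-field ${\mathcal A}'$ is essential: it makes the joint behaviour of $A$ and ${\mathcal A}_{\text{\rm anc}}$ ${\rm P}$-free. The marginal ancillarity of $A$ alone would not suffice.
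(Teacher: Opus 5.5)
Your proof is correct. The paper gives no proof of its own for this proposition; it imports the statement from \cite{basu1959} via the supplement of \cite{hallin2021distribution}. Your argument is the standard one behind that result:
\begin{itemize}
\item Basu's Second Theorem makes any ancillary $\mathcal A'\supseteq\mathcal A_{\text{\rm anc}}$ independent of $\mathcal A_{\text{\rm suff}}$ under every ${\rm P}$.
\item Hence, for $A\in\mathcal A'$, conditioning $\indicator_A$ on $\mathcal A=\sigma(\mathcal A_{\text{\rm suff}}\cup\mathcal A_{\text{\rm anc}})$ reduces to conditioning on $\mathcal A_{\text{\rm anc}}$.
\item So $\indicator_A$ is a.s.\ $\mathcal A_{\text{\rm anc}}$-measurable, and condition~(i) puts $A$ in $\mathcal A_{\text{\rm anc}}$.
\end{itemize}
The $\pi$-$\lambda$ step and the null-set bookkeeping are all fine.

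One correction to your closing remark on where the ``delicate point'' lies. The step where the ancillarity of the larger $\sigma$-field $\mathcal A'$ (rather than of $A$ alone) is indispensable is your first one. There it gives independence of $A\cap B$ from $\mathcal A_{\text{\rm suff}}$ for every $B\in\mathcal A_{\text{\rm anc}}$. The ${\rm P}$-free version $g$ of your second step is not needed under the stated hypotheses. You could run your third step separately for each ${\rm P}$, with $g_{\rm P}={\rm E}_{\rm P}[\indicator_A\,\vert\,\mathcal A_{\text{\rm anc}}]$, and obtain $\indicator_A=g_{\rm P}$ ${\rm P}$-a.s. Mutual absolute continuity then makes $A\triangle\{g_{\rm P}=1\}$ a $\mathcal P$-null set.

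What your second step does buy is that the single $g$ satisfies $\indicator_A=g$ a.s.\ under every ${\rm P}\in\mathcal P$ simultaneously. So $A\triangle\{g=1\}$ is $\mathcal P$-null by definition, and your final appeal to mutual absolute continuity is superfluous. As written, your argument therefore establishes the statement without that assumption, interpreting (i) as ``containing all sets null under every ${\rm P}\in\mathcal P$''.
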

      
      


\newcommand{\expect}{{\mathbb E}}
\section{Completeness of nuisance experiments}$\,$

Consider an experiment $\E \coloneqq \big(\outcome, \Field, \P\big)$ with $\P \coloneqq \left\{\prob_{\lpnui}: \lpnui\in\LPnui \right\} .$ Throughout  assume that the probability measures in $\P$ are mutually absolutely continuous and denote by $\expect_\lpnui$ the expectation under $\prob_\lpnui$. 

Recall that   the $\sigma$-field ${\cal B}_0\subseteq{\cal B}$ is called (boundedly) \emph{complete} in $\E$ if, for any (bounded) real-valued ${\Field}_0$-measurable 
 $Y$,   $\expect_\lpnui Y=0$ for all $\lpnui\in\LPnui$  implies~$Y=0$ $\prob_0$-a.s. (hence, $\prob_\eta$-a.s.\ for all $\eta\in\Upsilon$). Equivalently, we also say that the {\em marginal} subexpe\-riment~$\E_{{\cal B}_0}\coloneqq \big(\outcome, {\cal B}_0, \P \big)$ 
  (in which the statistician does not observe $\Field$, but only ${\cal B}_0$)   is (boundedly) \emph{complete}.  
 Denote~by 
\begin{equation}\label{eqn:spanRNDervatives}
	 {\cal S}_{{\cal B}_0}\coloneqq \text{\rm span}\left\{
	\expect_0\left[\dfrac{\dd\prob_{\lpnui}}{\dd\prob_{0}}\vert
	{\cal B}_0\right]: \lpnui\in\LPnui\right\}
\end{equation}
the subspace of $L^1({\cal B}_0,  {\mathbb P}_{0})$ spanned by the marginal likelihood ratios 
 $\expect_0\left[{\dd\prob_{\lpnui}/\dd\prob_{0}}\vert
{\cal B}_0\right]$  and note that  $ {\cal S}_{{\cal B}_0}\subseteq  {\cal S}_{{\cal B}}$. It follows from \citet[Proposition~1]{hallin2023bounded}---an extension of \cite{Far62} to restricted subexperiments---that ${\cal B}_0$ (equivalently, the subexperiment~${\E}_{{\cal B}_0}$) is complete if and only if~${\cal S}_{{\cal B}_0}$ is dense in $L_1({\Field}_{0},\prob_0)$. 
%

Recall that a sequence $\left(\FieldSub_k\right)_{k\in\rN}$ of $\sigma$-fields  is a \emph{filtration} on $(\Omega,{\Field}_0)$ if $\FieldSub_k\subseteq~\!\FieldSub_{k+1}\subseteq~\!{\Field}_0$ for  all~$k\in\rN$; call it  {\it convergent } if $\bigcup_{k\in\rN}\FieldSub_k={\Field}_0$. The following result is convenient in proving bounded completeness of experiments defined on infinite-dimensional sample spaces. 

\begin{lemma}\label{lem:CompletenessInfinite}
	Consider the experiment $\E = \big(\outcome, \Field, \P = \left\{\prob_{\lpnui}: \lpnui\in\LPnui \right\} \big)$ and let $\left(\FieldSub_k\right)_{k\in\rN}$ be a convergent filtration on $(\Omega,{\Field}_0)$ where ${\Field}_0\subseteq{\cal B}$. 
	Then ${\cal E}_{{\cal B}_0}$ is boundedly complete 
	if and only if each marginal subexperiment $\E_{\FieldSub_k}\coloneqq \big(\outcome, {\cal F}_k, \P \big)$ is boundedly complete.
\end{lemma}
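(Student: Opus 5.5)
We rely on the density characterization recalled just above: by \citet[Proposition~1]{hallin2023bounded}, bounded completeness of a restricted experiment $\E_{{\cal G}}$ is equivalent to ${\cal S}_{{\cal G}}$ being dense in $L^1({\cal G},\prob_0)$. Strictly speaking that result is stated for completeness rather than bounded completeness. We therefore work with the dual formulation. Bounded completeness of $\E_{\cal G}$ means that every bounded ${\cal G}$-measurable $Y$ with $\expect_0[Y\,\dd\prob_\eta/\dd\prob_0]=0$ for all $\eta$ vanishes. Since $\expect_0[Y\,\dd\prob_\eta/\dd\prob_0]=\expect_0\big[Y\,\expect_0[\dd\prob_\eta/\dd\prob_0\vert{\cal G}]\big]$, this is exactly the statement that the annihilator of ${\cal S}_{\cal G}$ in $L^\infty({\cal G},\prob_0)=(L^1({\cal G},\prob_0))^*$ is trivial. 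By Hahn--Banach, that is equivalent to ${\cal S}_{\cal G}$ being dense in $L^1({\cal G},\prob_0)$. Both directions of the lemma then reduce to comparing the dense subspaces ${\cal S}_{{\cal F}_k}$ and ${\cal S}_{{\cal B}_0}$.

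\emph{Sufficiency.} Assume each $\E_{{\cal F}_k}$ is boundedly complete, and let $Y$ be bounded, ${\cal B}_0$-measurable, with $\expect_\eta Y=0$ for all $\eta$. Put $Y_k\coloneqq\expect_0[Y\vert{\cal F}_k]$, which is bounded and ${\cal F}_k$-measurable. For every $\eta$, the tower property gives
\[
\expect_\eta Y_k=\expect_0\big[Y_k\,\expect_0[\dd\prob_\eta/\dd\prob_0\vert{\cal F}_k]\big].
\]
The difficulty is that this does not obviously equal $\expect_\eta Y=\expect_0\big[Y\,\expect_0[\dd\prob_\eta/\dd\prob_0\vert{\cal B}_0]\big]$. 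This is the main obstacle, and we expect to resolve it by choosing the filtration so that the conditional likelihood ratios are compatible. What is needed is that $\expect_0[\dd\prob_\eta/\dd\prob_0\vert{\cal B}_0]$ be ${\cal F}_k$-measurable for $\eta$ in a subfamily $\Upsilon_k$ whose union is dense. In the application (Proposition~\ref{UniqueAnc}(iii)), ${\cal F}_k$ is generated by the likelihood ratios themselves, so this holds. For such $\eta$ we get $\expect_0[Y_k L_\eta]=\expect_0[Y L_\eta]=0$, with $L_\eta\coloneqq\expect_0[\dd\prob_\eta/\dd\prob_0\vert{\cal B}_0]$.

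To pass to general $\eta$, we would use the density of ${\cal S}_{{\cal F}_k}$ in $L^1({\cal F}_k,\prob_0)$ to conclude $Y_k=0$. Lévy's upward martingale convergence theorem, applied with $\sigma(\bigcup_k{\cal F}_k)={\cal B}_0$ (the intended meaning of ``convergent''), then gives $Y_k\to Y$ $\prob_0$-a.s. and in $L^1$, hence $Y=0$. Mutual absolute continuity transfers this to every $\prob_\eta$.

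\emph{Necessity.} Assume $\E_{{\cal B}_0}$ is boundedly complete. Here we would show that ${\cal S}_{{\cal F}_k}$ is dense in $L^1({\cal F}_k,\prob_0)$ by projection. The conditional expectation $\expect_0[\cdot\vert{\cal F}_k]$ is a contraction from $L^1({\cal B}_0)$ onto $L^1({\cal F}_k)$, and it maps ${\cal S}_{{\cal B}_0}$ onto ${\cal S}_{{\cal F}_k}$ by the tower property. A continuous surjection maps a dense set onto a dense set, so ${\cal S}_{{\cal F}_k}$ is dense and $\E_{{\cal F}_k}$ is boundedly complete. Equivalently, without the density characterization: a bounded ${\cal F}_k$-measurable $Y$ with $\expect_\eta Y=0$ for all $\eta$ is in particular ${\cal B}_0$-measurable, so $Y=0$. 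This direction is routine. The delicate point of the whole proof is the compatibility and limit-interchange step in the sufficiency direction.
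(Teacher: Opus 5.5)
Your necessity direction is fine, and your Hahn--Banach reformulation (bounded completeness of ${\cal E}_{\cal G}$ iff ${\cal S}_{\cal G}$ is dense in $L^1({\cal G},\mathbb{P}_0)$) is the right tool. You also correctly locate the obstacle: $\mathbb{E}_\eta Y_k$ need not vanish. Your proposed fix does not remove it, however.

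Compatibility only gives $\mathbb{E}_\eta Y_k=\mathbb{E}_0[Y_kL_\eta]=0$ for $\eta\in\Upsilon_k$. Bounded completeness of ${\cal E}_{{\cal F}_k}$ (equivalently, density of ${\cal S}_{{\cal F}_k}$) lets you conclude $Y_k=0$ only if $Y_k$ annihilates $\mathbb{E}_0[L_\eta\vert{\cal F}_k]$ for \emph{every} $\eta$. Density of $\bigcup_m\Upsilon_m$ does not help at fixed $k$: for $\eta\in\Upsilon_m\setminus\Upsilon_k$ you learn $\mathbb{E}_\eta Y_m=0$, not $\mathbb{E}_\eta Y_k=0$. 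What your step actually uses is bounded completeness of ${\cal F}_k$ for the \emph{subfamily} $\{\mathbb{P}_\eta:\eta\in\Upsilon_k\}$. That is strictly stronger than the hypothesis.

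The gap cannot be closed, because the `if' direction is false as stated, even under your compatibility condition. Here is a counterexample.
\begin{itemize}
\item Let $X_1,X_2,\ldots$ be i.i.d.\ ${\cal N}(0,1)$ under $\mathbb{P}_0$, and let $\Upsilon\coloneqq\{\eta\in\ell^2:\sum_i2^{-i}\eta_i=0\}$.
\item Let $\mathbb{P}_\eta$ make the $X_i$ independent ${\cal N}(\eta_i,1)$; all are mutually absolutely continuous by Kakutani.
\item Put ${\cal F}_k\coloneqq\sigma(X_1,\ldots,X_k)$ and ${\cal B}_0\coloneqq\sigma(X_1,X_2,\ldots)$.
\end{itemize}
Each ${\cal E}_{{\cal F}_k}$ is complete, since $(\eta_1,\ldots,\eta_k)$ ranges over all of $\mathbb{R}^k$ (adjust $\eta_{k+1}$). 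Your compatibility condition holds with $\Upsilon_k$ the elements of $\Upsilon$ supported in $\{1,\ldots,k\}$, and $\bigcup_k\Upsilon_k$ is $\ell^2$-dense in $\Upsilon$. Yet $V\coloneqq\sum_i2^{-i}X_i\sim{\cal N}(0,1/3)$ under every $\mathbb{P}_\eta$. So $Y\coloneqq\Phi(\sqrt{3}\,V)-1/2$ is bounded, nonzero, and centered under all $\mathbb{P}_\eta$, and ${\cal E}_{{\cal B}_0}$ is not boundedly complete. Here $Y_k$ is a nonconstant function of $\sum_{i\le k}2^{-i}X_i$. Restricted to ${\cal F}_k$, the family $\{\mathbb{P}_\eta:\eta\in\Upsilon_k\}$ is a Gaussian shift with mean confined to a hyperplane, which is exactly where your step breaks.

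The paper's proof skips the same point through the inclusion ${\cal S}_{{\cal F}_k}\subseteq{\cal S}_{{\cal B}_0}$, which fails here. Whenever $\sum_{i\le k}2^{-i}\eta_i\neq0$, $\mathbb{E}_0[L_\eta\vert{\cal F}_k]=\exp(\sum_{i\le k}\eta_iX_i-\frac12\sum_{i\le k}\eta_i^2)$ is the likelihood ratio of a shift outside $\Upsilon$. $Y$ has nonzero mean under that shift, so this element is not even in the closure of ${\cal S}_{{\cal B}_0}$.

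A correct version needs an extra hypothesis. One option: $\mathbb{E}_0[L_\eta\vert{\cal F}_k]$ lies in the $L^1(\mathbb{P}_0)$-closure of ${\cal S}_{{\cal B}_0}$ for all $\eta$ and $k$. Then $\mathbb{E}_\eta Y_k=\mathbb{E}_0\big[Y\,\mathbb{E}_0[L_\eta\vert{\cal F}_k]\big]=0$ for all $\eta$, so $Y_k=0$, and your martingale step finishes. The other option is your subfamily completeness. Both hold in the Gaussian use of Proposition~\ref{UniqueAnc}(iii): with ${\cal F}_k$ generated by $\Delta_{\mathrm{nuis},\eta_1}(1),\ldots,\Delta_{\mathrm{nuis},\eta_k}(1)$, one has $\mathbb{E}_0[L_\eta\vert{\cal F}_k]=L_{\pi_k\eta}$, where $\pi_k$ is the projection onto $\mathrm{span}(\eta_1,\ldots,\eta_k)$.
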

\begin{proof}
	The ``only if\," part is trivial. As for the ``if\," part, in view of \citet[Proposition~1]{hallin2023bounded}, it is sufficient to establish that if each~$\E_{\FieldSub_k}$ is boundedly complete, then  ${\cal S}_{{\cal B}_0}$ is dense in $L_1({\Field}_{0},\prob_0)$, i.e., for any  bounded  $Y\in L_1(\Field,\prob_0)$ and   $\delta>~\!0$, there exists $Y^\prime \in {\cal S}_{{\cal B}_0}$ such that 
	${\mathbb E}_0[\vert Y^\prime -Y\vert]<~\!\delta$. 
 From Doob's martingale convergence theorem, we know 
 that~$Y_k\coloneqq\expect_0\left[Y\vert\FieldSub_k\right]$ converges to~$Y$ in~$L_1(\Field,\prob_0)$. Moreover, since $Y$ is bounded, $Y_k\in{\cal S}_{{\cal F}_k}$. As a result, we can find $k\in\rN$ and $Y''$ in $L_1(\FieldSub_k,\prob_0)$ such that $Y''$ is closer than $\delta/2$ to $Y$. Since $\E_{\FieldSub_k}$ is complete, $S_{{\cal F}_k}$ is dense in $L_1(\FieldSub_k,\prob_0)$, we  subsequently can find $Y'\in S_k$ closer than $\delta/2$ to $Y''$. As $S_k\subset S$, we found $Y'\in S$ closer than $\delta$ to $Y$. Mazur's theorem concludes the proof.
\end{proof}
\end{document}